\numberwithin{equation}{section}
\newtheorem{theorem}{Theorem}[section]
\newtheorem{lemma}[theorem]{Lemma}
\newtheorem{corollary}[theorem]{Corollary}
\newtheorem{definition}[theorem]{Definition}
\newtheorem{proposition}[theorem]{Proposition}
\theoremstyle{definition}
\newtheorem{remark}[theorem]{Remark}
\newtheorem{assumption}[theorem]{Assumption}
\newtheorem{problem}[theorem]{Boundary Value Problem}
\newcommand{\lno}{\Vert} 
\newcommand{\rno}{\Vert} 
\newcommand{\symup}{\mathrm}
\newcommand{\symbf}{\mathbf}
\newcommand{\diff}{\mathop{}\!\symup{d}}
\DeclareMathOperator{\ind}{Ind} 
\DeclareMathOperator{\dom}{dom}
\DeclareMathOperator{\coker}{coker}
\DeclareMathOperator{\ran}{ran}
\DeclareMathOperator{\supp}{supp}
\begin{document}
    
\begin{frontmatter}

\title{Elliptic boundary value problem on\\ non-compact $G$-manifolds} 
\author{Xiangsheng Wang}
\ead{wangxs1989@gmail.com}
\address{Chern Institute of Mathematics \& LPMC, Nankai University, Tianjin, P.R. China 300071}

\begin{abstract}
In this paper, an equality between the Hochs-Mathai type index and the Atiyah-Patodi-Singer type index is established when the manifold and the group action are both non-compact, which generalizes a result of Ma and Zhang for compact group actions. As a technical preparation, a problem concerning the Fredholm property of the global elliptic boundary value problems of the Atiyah-Patodi-Singer type on a non-compact manifold is studied.
\end{abstract}

\begin{keyword}
    {\small Elliptic boundary value problem \sep Locally compact groups \sep Fredholm property \sep Atiyah-Patodi-Singer type index \sep Hochs-Mathai type index}
    
    \MSC[2010] {\small 58J32 (Primary) \sep 58F06 \sep 58J20 \sep 53D50 \sep 53C27 (Secondary)}
\end{keyword}

\end{frontmatter}

\section{Introduction} 
\label{sec:introducton}

\noindent Let $M$ be a compact Spin$^c$ manifold with nonempty boundary carrying a compact group action $G$. In \cite{Ma:2014ja}, to prove Vergne's conjecture on $[Q,R]=0$ principle, more explicitly, quantization commuting with symplectic reduction principle, for non-compact symplectic manifolds, Ma and Zhang established an important equality between the transversal index and the Atiyah-Patodi-Singer\footnote{We'll use the acronym APS in the sequel.} type index on $M$,
\begin{equation}
    \label{eq:mz}
    \text{\texttt{Transversal index}}=\text{\texttt{APS type index}}.
\end{equation}

To prove (\ref{eq:mz}), a key ingredient is to use an index introduced by \citet{Braverman:2002wy}, which coincides with the transversal index by a result of Braverman, c.f. \cite{Braverman:2002wy}, \cite{Ma:2012tq}. \citet{Ma:2014ja} proved the following equality between two kinds of indices on $M$,
\begin{equation}
    \label{eq:mz2}
    \text{\texttt{Braverman index}} = \text{\texttt{APS type index}}.
\end{equation}

To consider problems concerning the $[Q,R]=0$ principle for the non-compact case, it is desirable to generalize the above kind of equality to the case that both the manifold and the acting group are non-compact. The main purpose of this paper is to establish such an equality. More precisely, let $M$ be an even dimensional non-compact oriented symplectic manifold equipped with a prequantum line bundle $(L,h^L,\nabla^L)$, on which a locally compact group $G$ acts symplectically. Suppose the group action is both proper and cocompact. Then we prove that \uline{an indices equality of type (\ref{eq:mz2})} still holds on $M$.
\footnote{Strictly speaking, the left hand side of (\ref{eq:mz2}) should be defined on a manifold without boundary containing $M$. We will describe the precise condition in Section \ref{sec:braverman}.}

The main difficulty for such a generalization is to find proper definitions for both sides of (\ref{eq:mz2}) in the non-compact setting. On the left hand side, an index introduced by \citet{Hochs:2015iu} is a natural candidate. Therefore, the main effort of this paper centers around the right hand side of (\ref{eq:mz2}), i.e., establishing the well-definedness of the APS type index for non-compact manifolds. Admittedly, this is more or less of technical flavor.

Indeed, let $(M,g^{\symup{T}M})$ be an even dimensional non-compact Spin$^c$ Riemannian manifold with boundary. To carry out the usual argument about $L^2$ sections, one relies on a cut-off function, say $f$, as in \citep{Mathai:2010bt}. Let $S(\symup{T}M)=S_{+}(\symup{T}M)\oplus S_{-}(\symup{T}M)$ be the spinor bundle and $E$ be the coefficient Hermitian vector bundle respectively. Suppose that $\Gamma(M, S(\symup{T}M)\otimes E)^G$ is the space of invariant smooth sections. An orthogonal projection operator $P_f$ is defined by $f$ on the space of $L^2$ sections of $S(\symup{T}M)\otimes E$, whose range, denoted by $\symbf{H}^0_f(M,S(\symup{T}M)\otimes E)^G$, is the closure of $f\Gamma(M, S(\symup{T}M)\otimes E)^G$ in the space of $L^2$ sections. In the same fashion, higher Sobolev spaces $\symbf{H}^i_f(M,S(\symup{T}M)\otimes E)^G$ associated to $f$ are also defined. 

Roughly, two aspects should be checked when one tries to adapt the definition of the APS type index in order to take $f$ into consideration. Firstly, it's well-known that the APS type index involves a kind of global boundary condition, the so-called APS boundary condition, whose definition relies on the spectral projection of the boundary operator. In this paper, with the help of $f$, certain spectral projection, denoted by $P_{\ge 0, f}$, plays a similar role on a non-compact manifold. Secondly, the Fredholm property of the Dirac operator with the APS boundary condition should be reestablished. For this purpose, we will apply the elementary functional analysis method as in \citep{Atiyah:1975uz} and \citep{Bar:2012in}. More concretely, our strategy is as follows. A partial result, the Fredholmness of the boundary value problem with the product structure assumption, is proved in Subsection \ref{sub:product_case}. Then the result is extended to the general case in Subsection \ref{sub:general}, using the Rellich perturbation theorem about self-adjoint operators. 

Put it briefly, the major technical problem is the solution of the following boundary value problem on a non-compact manifold.
\begin{problem}
	\label{prob:1}
	Let $D$ be the Spin$^c$ Dirac operator on $M$. Define the following boundary value problem,
	\begin{equation*}
		(P_fD_+,P_{\ge 0,f}):\dom P_fD_+ \rightarrow \mathbf{H}^0_f(M,S_-(\symup{T}M)\otimes E)^G ,
	\end{equation*}
	where
    \begin{equation}
        \label{eq:bvp1}
        \dom P_fD_+ =\{\phi \in \symbf{H}^1_f(M,S_+(\symup{T}M)\otimes E)^G|P_{\ge 0,f} (\phi|_{\partial M}) = 0\}. 
    \end{equation} 
    	
\noindent \textit{Question}. Is $(P_fD_+,P_{\ge 0,f})$ a Fredholm operator?
\end{problem}

The rest of the paper is organized as follows. The first two sections contain preliminary materials. Geometric background and notation conventions are summarized in Section \ref{sec:aps}. Section \ref{sec:dirac} contains some results on the property of the operator $P_fD$ on a manifold with or without boundary. With these preparations, Boundary Value Problem \ref{prob:1} will be solved in Section \ref{sec:product}, see Theorem \ref{thm:main1} for a precise statement. Section \ref{sec:braverman} is devoted to the generalization of Ma-Zhang's result (\ref{eq:mz2}), specifically, we formulate and prove it as Theorem \ref{thm:fin}.

\section{Notation and assumption} 
\label{sec:aps}

Let $M$ be an even dimensional non-compact oriented Spin$^c$ manifold with nonempty boundary $\partial M$ and $\dim{M}=n$. The orientation on $\partial M$ is induced by that of $M$. Moreover, assume that $M$ admits a left action by a locally compact Lie group $G$. About the group action, we make the following  assumption.
\begin{assumption}
	The group action is \emph{proper} and \emph{cocompact}.
\end{assumption}
The properness means that the following map
\begin{equation*}
	\begin{gathered}
		G\times M \rightarrow M\times M \\
		(g, x) \mapsto (x,gx)
	\end{gathered}
\end{equation*} 
is proper. And we say a group action is cocompact if and only if the quotient space $M/G$ is compact. Sometimes we also say a subset $U$ of a manifold with group action $G$ is cocompact, which means that $U/G$ is compact.

We will choose a fixed right invariant Haar measure $\symup{d}g$ on $G$ in the whole paper. A prominent feature of a proper action is that there exists a smooth, non-negative function $c(x)$ on $M$ whose support intersects every orbit of the action in a compact subset, c.f. \citep{Bourbaki:2004vy}. A function with such property will be called a \emph{cut-off function}. Furthermore, a cut-off function $c$ can be normalized as,
\begin{equation}
	\label{eq:nor}
	\int_G c^2(gx) \diff g =1,
\end{equation}
although such normalization is not always needed.

As a first application of cut-off functions, we construct a $G$-invariant Riemannian metric on $M$. Starting with an arbitrary metric on the tangent bundle $\symup{T}M$, one can obtain a $G$-invariant metric $\bar{g}$ by using the following averaging process,
\begin{equation*}
	\bar{g}(v,w)(x)= \int_G c^2(hx) g(h_*v,h_*w)(hx) \diff h.
\end{equation*}
From now on, we can and we will assume that $G$ acts on $(M,g^{\symup{T}M})$ isometrically.

Fix a Spin$^c$ structure over the Riemannian manifold $(M,g^{\symup{T}M})$. Since the metric is $G$-invariant, we make a further assumption that the $G$-action on the orthogonal frame bundle can be lifted up to the Spin$^c$ principal bundle. In other word, we suppose that the Spin$^c$ structure on $M$ is $G$-equivariant.\footnote{We take this assumption mainly for convenience. Instead one can also assume that the geometric data on spinors, more general on Clifford module, are $G$-invariant directly, just like \citep{Berline:2004vr}.} The spinor bundle associated to the Spin$^c$ structure is denoted by $S(\symup{T}M)=S_{+}(\symup{T}M)\oplus S_{-}(\symup{T}M)$. With the metric $g^{\symup{T}M}$ and Levi-Civita connection $\nabla^{\symup{T}M}$ on $TM$, one can define the metric $g^{S(\symup{T}M)}$ and connection $\nabla^{S(\symup{T}M)}$ on $S(\symup{T}M)$ as usual.\footnote{More precisely, to define metric and connection on spinor bundle, one needs the information on the determinant line bundle of the Spin$^c$ structure, c.f. \citep{LawsonJr:1989ub}.} Moreover, we introduce an auxiliary complex vector bundle $(E,h^E,\nabla^E)$ over $M$, which carries a $G$-invariant Hermitian metric $h^E$ and a $G$-invariant Hermitian connection $\nabla^E$. Now, the metric $\left\langle \cdot, \cdot \right\rangle$ and connection $\nabla^{S(\symup{T}M)\otimes E}$ on $S(\symup{T}M)\otimes E$ follow from those of $S(\symup{T}M)$ and $E$, both of which are $G$-invariant. Whenever there is no ambiguity, instead of $\nabla^{S(\symup{T}M)\otimes E}$, a short symbol $\nabla$ will be used. Finally, one notices that as a Clifford module of $\symup{T}M$, the Clifford action on $S(\symup{T}M)\otimes E$ is also $G$-equivariant.

	For another application of cut-off functions, let us formulate a suitable variant of Sobolev spaces on non-compact manifolds with group action.  We begin with a more explicit construction of cut-off functions with the help of  cocompactness condition. Since $M/G$ is compact, a compact subset $Y$ of $M$ exists such that $G(Y)=M$, c.f. \citep{Phillips:1989wa}. Equivalently, we can say that $M$ is saturated by $Y$ in Bourbaki's language, c.f.  \citep{Bourbaki:2004vy}. Let $U$ and $U'$ be two precompact open subsets of $M$, which satisfy the following inclusion relations,
	\begin{equation}
        \label{eq:setr}
		Y \subseteq U \subseteq \overline{U} \subseteq U'.
	\end{equation}
    Now one can find a function $f:M\rightarrow [0,1]$ such that $f|_U=1$ and $\supp (f)\subseteq U'$. Clearly, $f$ satisfies the definition of a cut-off function (without normalization). Throughout this paper, we will use a fixed $f$ as our cut-off function. Moreover, let $\chi$ be the normalization of $f$, that is,
    \begin{equation}
        \label{eq:xd}
    	\chi(x) = \frac{f(x)}{(\int_G f^2(gx) \diff g)^{1/2}}.
    \end{equation}
	
    As usual, we identify the cotangent bundle $\symup{T}^*M$ with the tangent bundle $\symup{T}M$ via the Riemannian metric. Especially, $\symup{T}^*M$ will have a metric induced from the metric on $\symup{T}M$, as well as a connection $\nabla^{\symup{T}^*M}$. Using the tensor product construction of connections, $\nabla^{S(\symup{T}M)\otimes E}$ and $\nabla^{\symup{T}^*M}$ can be used to define a connection on $(\bigotimes^i \symup{T}^*M)\otimes S(\symup{T}M)\otimes E$, which maps from $(\bigotimes^i \symup{T}^*M)\otimes S(\symup{T}M)\otimes E$ to $(\bigotimes^{i+1} \symup{T}^*M)\otimes S(\symup{T}M)\otimes E$. Iterating such construction $i$-th times, the symbol $\nabla^i$ denotes the resulting operator, 
    \begin{equation*}
        \nabla^i:\Gamma(M,S(\symup{T}M)\otimes E)\rightarrow \Gamma(M,(\otimes^i \symup{T}^*M)\otimes S(\symup{T}M)\otimes E).
    \end{equation*}
    Note $\nabla=\nabla^1=\nabla^{S(\symup{T}M)\otimes E}$, which is in conformity with our symbol conventions. With the obvious metric on the tensor product bundle, we define the usual Sobolev $k$-norm of a smooth section $\phi\in \Gamma(M,S(\symup{T}M)\otimes E)$ like in \citep[Ch. III, (2.1)]{LawsonJr:1989ub},    
	\begin{equation}
        \label{eq:sn}
        \Vert \phi \Vert^2_{k,M} =\sum^{k}_{i=0} \int_M \Vert \nabla^i \phi \Vert^2_{(\otimes^i \symup{T}^*M)\otimes S(\symup{T}M)\otimes E} \diff v,
	\end{equation}
	where $\symup{d}v$ is the Riemannian volume element on $(M,g^{\symup{T}M})$. The completions of smooth sections of $S(\symup{T}M)\otimes E$ with these norms are denoted by $\symbf{H}^k(M, S(\symup{T}M)\otimes E)$. Obviously, $\symbf{H}^0$ is just the space of $L^2$ sections of $S(\symup{T}M)\otimes E$, we will use two terms interchangeably. If there is no ambiguity, we often omit the subscript indicating the manifold on which the Sobolev norms are calculated.
    
    In appearance of a $G$-action, we mainly concern the $G$-invariant sections of $S(\symup{T}M)\otimes E$. Let $\Gamma(M,S(\symup{T}M)\otimes E)^G$ be the subspace that consists of $G$-invariant smooth sections. The completions of $f\Gamma(M,S(\symup{T}M)\otimes E)^G$ under the $k$-th Sobolev norms are denoted by $\symbf{H}^k_f(M, S(\symup{T}M)\otimes E)^G$ respectively, which are spaces that we are working with in this paper. 
    
    In many ways, $\symbf{H}^k_f(M, S(\symup{T}M)\otimes E)^G$ behave like the usual Sobolev spaces. In fact, if the quotient space $M/G$ does own a differential structure, there exists a natural bijection, although not an isometry in general, between usual Sobolev spaces on $M/G$ and $\symbf{H}^k_f(M, S(\symup{T}M)\otimes E)^G$. A simple but useful feature of $\symbf{H}^0_f(M, S(\symup{T}M)\otimes E)^G$ is the following estimate, c.f. \citep[(2.6)]{Mathai:2010bt}. Using the same notations as in (\ref{eq:setr}), for $s\in \Gamma(M,S(\symup{T}M)\otimes E)^G$, one has
	\begin{equation}
		\label{eq:fest}
		\lno s \rno_{U,0} \le \lno fs \rno_0 \le \lno s \rno_{U',0} \le C\lno s \rno_{U,0},
	\end{equation}
    where $\lno \cdot \rno_{U,0}$ is the $L^2$-norm on $U$ and $C$ is a positive constant independent of $s$.
	
	Following \citet{Mathai:2010bt}, we define a particular projection operator associated to a cut-off function, which will play a key role in the whole paper.
	\begin{definition}
		Define $P_f$ to be the orthogonal projection operator, which maps from $\symbf{H}^0(M,\allowbreak S(\symup{T}M)\otimes E)$ onto $\symbf{H}^0_f(M,S(\symup{T}M)\otimes E)^G$.
	\end{definition}
	
    Quite satisfactory, we can write down an explicit expression for $P_f$. Define the modular function for $G$ as $\delta:\, G \rightarrow \mathbb{R}^+$ satisfying $\symup{d}(g^{-1})=\delta(g)\diff g$. Bunke \citep[Appendix]{Mathai:2010bt}, in the unimodular case, and Tang, Yao and Zhang \citep[Proposition 3.1]{Tang:2013dq}, in the general case, proved that, for $\phi \in \symbf{H}^0(M,S(\symup{T}M)\otimes E)$,
	\begin{equation}
		\label{eq:proj}
        (P_f \phi)(x)=\frac{f(x)}{A(x)^2} \int_G \delta(g)f(gx)g^{-1}(\phi(gx))\diff g,
	\end{equation}
	where 
	\begin{equation*}
		A(x)=(\int_G \delta(g)f^2(gx)\diff g)^{1/2}.
	\end{equation*}
	By (\ref{eq:proj}), $P_f$ maps smooth sections to smooth sections.
	
    In order to define the APS boundary condition, we review the definition of Spin$^c$ Dirac operators briefly. Let $e_1,e_2,\dots,e_n$ be a locally oriented orthonormal frame of $\symup{T}M$. Recall that $\nabla$ is the Clifford connection, i.e., both unitary and compatible with the Clifford action. One has the following standard definition of the Spin$^c$ Dirac operator on $M$, c.f. \citep[Appendix D]{LawsonJr:1989ub},
	\begin{equation}
		\label{eq:dd}
		D = \begin{bmatrix} 0& D_-\\ D_+& 0 \end{bmatrix} = \sum^{n}_{i=1} c(e_i)\nabla_{e_i}:\Gamma(M,S(\symup{T}M)\otimes E) \rightarrow \Gamma(M,S(\symup{T}M)\otimes E),
	\end{equation}
    where $c(\cdot)$ denotes the Clifford action. Due to the $G$-equivariance of the Clifford action and connection, $D$ is a $G$-equivariant differential operator. As a result, the space of invariant sections $\Gamma(M,S(\symup{T}M)\otimes E)^G$ is preserved by $D$.
	
    On the boundary, one can define another Dirac operator $D_{\partial M}$. Let $e_1,e_2,\dots,e_n$ be a locally oriented orthonormal frame around a boundary point $p$. Besides, we require $e_n$ to be the inward unit normal vector field on the boundary. Following \citep[Lemma 2.2]{Gilkey:1993dm}, $D_{\partial M}:\Gamma(\partial M,(S(\symup{T}M)\otimes E)|_{\partial M}) \rightarrow \Gamma(\partial M,(S(\symup{T}M)\otimes E)|_{\partial M})$, can be written as
	\begin{equation}
		\label{eq:pdd}
		D_{\partial M} = \begin{bmatrix} D_{\partial M,+} &0\\0& D_{\partial M,-} \end{bmatrix} = - \sum^{n-1}_{i=1}c(e_n)c(e_i)\nabla_{e_i} +  \frac{1}{2}\sum^{n-1}_{i=1}\pi_{ii},
	\end{equation}
	where
	\begin{equation*}
		\pi_{ij}=\left\langle \nabla^{TM}_{e_i}e_j,e_n \right\rangle,\;1\le i,j \le n-1,
	\end{equation*}
    is the second fundamental form of $\partial M$. $D_{\partial M}$ is also a $G$-equivariant differential operator. But, unlike $D$, $D_{\partial M}$ preserves the $\mathbb{Z}_2$-grading of $(S(\symup{T}M)\otimes E)|_{\partial M}$. 
    
    By definition, the restricted function $f|_{\partial M}$ is a cut-off function of $\partial M$. Using (\ref{eq:proj}), one can find that for any $\phi\in \Gamma(M,S(\symup{T}M)\otimes E)$, $(P_f \phi)|_{\partial M} = P_{f|_{\partial M}}(\phi|_{\partial M})$. In case of no confusion, we still denote $P_{f|_{\partial M}}$, the projection operator on the boundary, by $P_f$. The boundary operators we are interested in are
	\begin{equation*}
		\label{eq:pbdo}
		P_fD_{\partial M,\pm}P_f: f\Gamma\big(\partial M,(S_{\pm}(TM)\otimes E)|_{\partial M}\big)^G \rightarrow f\Gamma\big(\partial M,(S_{\pm}(TM)\otimes E)|_{\partial M}\big)^G,
	\end{equation*}
	which satisfy the identities
	\begin{equation*}
		c(e_n)P_fD_{\partial M,\pm}P_f = -P_fD_{\partial M,\mp}P_fc(e_n).
	\end{equation*}
	
     We will prove in Section \ref{sec:dirac} that $P_fD_{\partial M,+}P_f$ (resp. $P_fD_{\partial M,-}P_f$) has a spectral decomposition with respect to $\symbf{H}^0_f(M, S_{+}(\symup{T}M)\otimes E)^G$ (resp. $\symbf{H}^0_f(M, S_{-}(\symup{T}M)\otimes E)^G$). With this result, the spectral projection operators appearing in the APS boundary condition are defined as follows.
     \begin{definition}
         \label{def:projaps}
          Define $P_{\ge 0,f}$ to be the orthogonal projection operator, which maps from $\symbf{H}^0_f(\partial M,\allowbreak (S_{+}(\symup{T}M)\otimes E)|_{\partial M})^G$ onto the closed subspace spanned by eigenspaces of $P_fD_{\partial M,+}P_f$ associated with nonnegative eigenvalues. Set $P_{< 0,f}=1-P_{\ge 0,f}$.
     \end{definition} 
     Note that both of the spectral projections are defined by $P_fD_{\partial M,+}P_f$. Throughout this paper, we stick to the ``plus'' version. Nevertheless, one can certainly define the ``minus'' counterpart using $P_fD_{\partial M,-}P_f$.
     
     With $P_{\ge 0,f}$, one can formulate the APS boundary condition.
     \begin{definition}
         \label{def:aps}
         For any section $s\in \Gamma(M,S_+(\symup{T}M)\otimes E)^G$, the APS boundary condition is defined to be $P_{\ge 0, f}((fs)|_{\partial M}) = 0$.
     \end{definition}
     With this definition, we can say that the boundary condition (\ref{eq:bvp1}) used in Boundary Value Problem \ref{prob:1} is exactly the APS boundary condition. In Section \ref{sec:product}, we will prove that the operator $(P_fD_{+},P_{\ge 0,f})$ in Boundary Value Problem \ref{prob:1} has a finite index. 
     \begin{definition}
         \label{def:apsind}
         The APS type index is defined to be the index of operator $(P_fD_{+},P_{\ge 0,f})$.
     \end{definition}
	
    Finally, let us comment a little on the geometric structure near the boundary. For a compact Riemannian manifold $Z$ with boundary $\partial Z$, it's a basic fact that for small $\epsilon$, the manifold near the boundary is diffeomorphic to $\partial Z\times [0,\epsilon]$ by exponential map. For simplicity, we always assume $\epsilon=1$. In our non-compact settings, since the group action on $M$ is cocompact and the Riemannian metric of $M$ is $G$-invariant, the same conclusion also holds. Moreover, the aforementioned diffeomorphism should be $G$-equivariant. If the diffeomorphism is also an isometry, i.e. $\partial M\times [0,1]$ has product metric, and the metric and connection on $E|_{\partial M\times [0,1]}$ are constant in the normal direction, we say that $M$ (with vector bundle $E$) has a (metric) \uline{product structure} near the boundary. Clearly, when $M$ has a product structure, the second fundamental form terms in (\ref{eq:pdd}) vanish and $D_{\partial M}$ degenerates to the classical form used in \citep{Atiyah:1975uz}. This special case will also play a key role in this paper.


\section{Some analytic properties of $P_fD$} 
\label{sec:dirac}

In this section, we will collect miscellaneous analytic facts about $P_fD$ on $M$ according to whether $M$ has a boundary or not. Some of them have been known, c.f. \citep{Mathai:2010bt}, but we summarize them here for the sake of completeness. In particular, we incorporate a proof on the existence of spectral decomposition of $P_fD_{\partial M}$,\footnote{To be more consistent with notations in Section \ref{sec:aps}, we should use $P_fD_{\partial M}P_f$ here. But since two operators coincide when acting on $f \Gamma(M,(S(\symup{T}M)\otimes E)|_{\partial M})$, we use this shorter symbol from time to time.} which, as we have indicated in Section \ref{sec:aps}, is essential for the definition of the APS boundary condition.

\subsection{The case of $M$ without boundary} 
\label{sub:empty}

In this subsection, we assume $\partial M = \emptyset$ temporarily. However, the requirement on the dimension of $M$ can be relaxed. The argument works for both odd and even dimensional manifolds. Apart from these two points, all other assumptions and notations will be the same as in Section \ref{sec:aps}. Basically, we are going to reprove some results for $P_fD$ that is well-known for self-adjoint elliptic operators on compact manifold.

\begin{proposition}
	\label{prop:sd}
	Recall that $D$ is the Spin$^c$ Dirac operator and $P_f$ is the projection operator defined by a cut-off function $f$. About $P_fD$, the following facts hold,
	\begin{enumerate}
		\item Viewed as an unbounded operator acting on $\symbf{H}^0_f(M,S(\symup{T}M)\otimes E)^G$, $P_fD$ is essentially self-adjoint;
		\item The spectrum of $P_fD$ is a discrete subset of the real line, which consists only of eigenvalues, and all eigenspaces of $P_fD$ have finite dimension;
		\item $\symbf{H}^0_f(M,S(\symup{T}M)\otimes E)^G$ has an orthogonal direct sum decomposition whose summands are eigenspaces of $P_fD$, that is,
		\begin{equation*}
			\symbf{H}^0_f(M, S(\symup{T}M)\otimes E)^G=\bigoplus_{\lambda \in \symup{sp}(P_fD)} H_\lambda,
		\end{equation*} 
		where $H_\lambda$ is the eigenspace associated with an eigenvalue $\lambda$;
		\item All eigensections are smooth.
	\end{enumerate}
\end{proposition}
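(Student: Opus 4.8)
The plan is to transplant the classical elliptic theory from a compact quotient to the non-compact $G$-manifold via the cut-off function $f$, treating $P_fD$ as the avatar on $\symbf{H}^0_f$ of a self-adjoint elliptic operator "downstairs''. First I would establish the basic elliptic estimate in the $f$-Sobolev spaces: using (\ref{eq:proj}) together with the estimate (\ref{eq:fest}) and the fact that $D$ is $G$-equivariant, one shows that for $\phi \in \symbf{H}^1_f(M,S(\symup{T}M)\otimes E)^G$ there is a Gårding-type inequality $\lno \phi \rno_{1} \le C(\lno P_fD\phi \rno_{0} + \lno \phi \rno_{0})$, with $C$ independent of $\phi$. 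This reduces to the interior elliptic estimate for $D$ on the precompact open sets $U \subseteq U'$ of (\ref{eq:setr}), since $G(Y)=M$ means it suffices to control everything over $U'$, and the cut-off $f$ identifies $\symbf{H}^k_f$-norms with honest Sobolev norms there up to uniform constants. The key consequence is a Rellich-type compactness statement: the inclusion $\symbf{H}^1_f(M,S(\symup{T}M)\otimes E)^G \hookrightarrow \symbf{H}^0_f(M,S(\symup{T}M)\otimes E)^G$ is compact, again because compactness only needs to be checked over the precompact $U'$ after multiplying by $f$.

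Next I would address essential self-adjointness (item 1). One checks directly that $P_fD$ is symmetric on $f\Gamma(M,S(\symup{T}M)\otimes E)^G$: this uses that $D$ is formally self-adjoint, that $\partial M=\emptyset$ so there is no boundary term in Green's formula, and that $P_f$ is an orthogonal projection commuting appropriately with the $G$-averaging in (\ref{eq:proj}). For essential self-adjointness I would verify that $\ran(P_fD \pm i)$ is dense, or equivalently that $\ker(P_fD^* \mp i) = 0$; here elliptic regularity (a weak solution of $(P_fD^* \mp i)\psi = 0$ is smooth, proved by localizing to $U'$ and invoking interior regularity for $D$) reduces the question to an $L^2$ solution of a first-order elliptic equation, which one kills using the symmetry and the $G$-invariance — morally this is the self-adjointness of an elliptic operator on the closed manifold $M/G$ (or an orbifold/stratified version thereof), transported back.

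With $P_fD$ self-adjoint and the compact resolvent in hand (items 2 and 3), the spectral theorem for self-adjoint operators with compact resolvent applies verbatim: $(P_fD - \lambda_0)^{-1}$ is a compact self-adjoint operator on $\symbf{H}^0_f$ for any $\lambda_0 \notin \symup{sp}(P_fD)$, so its spectrum is a discrete set of eigenvalues accumulating only at $0$, each with finite-dimensional eigenspace, and $\symbf{H}^0_f(M,S(\symup{T}M)\otimes E)^G$ decomposes as the orthogonal Hilbert-space direct sum of the corresponding eigenspaces $H_\lambda$ of $P_fD$; that the spectrum is real is immediate from self-adjointness. Finally, item 4 (smoothness of eigensections) follows by elliptic bootstrapping: an eigensection $\phi$ with $P_fD\phi = \lambda\phi$ is $G$-invariant with $f\phi \in \symbf{H}^k$ for all $k$ over $U'$ by iterating the elliptic estimate, hence smooth over $U'$, and by $G$-invariance smooth on all of $M$.

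I expect the main obstacle to be the self-adjointness argument, specifically showing density of $\ran(P_fD\pm i)$ without a literal quotient manifold to work on: the composition $P_fD$ of a differential operator with a non-local projection is not itself a differential operator, so interior elliptic regularity must be applied to $D$ and then pushed through $P_f$ using the explicit kernel (\ref{eq:proj}) and the properness of the action (which guarantees the $G$-integrals converge and behave well on the cocompact piece). Once that and the Rellich compactness are secured, the rest is a routine application of the spectral theorem.
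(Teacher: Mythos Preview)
Your proposal is correct and follows essentially the same route as the paper. Both arguments rest on the same three ingredients: the G\aa rding-type inequality (\ref{eq:lme1}), Rellich compactness for the inclusion $\symbf{H}^1_f \hookrightarrow \symbf{H}^0_f$ (Lemma~\ref{prop:rellich}), and the elliptic regularity statement you single out as the main obstacle, which the paper isolates as Lemma~\ref{lemma:reg} and proves in Appendix~\ref{sec:simple} by exactly the localization-to-$U'$ mechanism you describe. The only cosmetic difference is that the paper, rather than invoking the $\pm i$ criterion for essential self-adjointness, uses the regularity lemma to show $\ker P_fD = \coker P_fD$ directly, then builds a Green's operator $R$ (the inverse of $P_fD$ on $(\ker P_fD)^\perp$) and applies the spectral theorem to the compact self-adjoint $R$; your route via compact resolvent is equivalent.
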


	Basically, one can prove Proposition \ref{prop:sd} in a familiar way as the elliptic operator on the compact manifold. The only ingredient that deserves further explanation seems to be a regularity result on the generalized solution of $P_fD$. We state it in the following lemma, whose proof is left in Appendix \ref{sec:simple}.
    
	\begin{lemma}
		\label{lemma:reg}
		As an unbounded operator acting on $\symbf{H}^0_f(M,S(\symup{T}M)\otimes E)^G$, the kernel of the adjoint operator of $P_fD$, i.e., the cokernel of $P_fD$,  consists of smooth sections.
	\end{lemma}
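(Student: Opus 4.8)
The plan is to show that any $u \in \symbf{H}^0_f(M, S(\symup{T}M)\otimes E)^G$ lying in the kernel of the Hilbert-space adjoint $(P_fD)^*$ is actually a smooth section, by reducing to local elliptic regularity for the genuinely elliptic operator $D$ on $M$ (without group action) and then propagating the conclusion. First I would unwind what it means for $u$ to be in $\ker (P_fD)^*$: by definition this says $\langle u, P_fD\phi\rangle = 0$ for every $\phi$ in the domain of $P_fD$, in particular for every $\phi \in f\Gamma(M, S(\symup{T}M)\otimes E)^G$. Since $u$ itself lies in $\symbf{H}^0_f(\cdots)^G = \ran P_f$, we have $P_f u = u$, so $\langle u, P_fD\phi\rangle = \langle P_f u, D\phi\rangle = \langle u, D\phi\rangle$ (using that $P_f$ is an orthogonal, hence self-adjoint, projection on $L^2$). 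Thus $\langle u, D\phi\rangle = 0$ for all $\phi \in f\Gamma(M, S(\symup{T}M)\otimes E)^G$; equivalently, $Du = 0$ in the sense of distributions when tested against sections of the form $f\psi$ with $\psi$ smooth and $G$-invariant.

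The heart of the argument is upgrading this to: $Du = 0$ distributionally against \emph{all} compactly supported smooth test sections on $M$. The point is that test sections of the form $f\psi$, $\psi \in \Gamma(M,\cdots)^G$, are "enough" to pin down an invariant distribution. Concretely, given an arbitrary $\omega \in \Gamma_c(M, S(\symup{T}M)\otimes E)$, its $G$-average against the cut-off weight — roughly $\tilde\omega(x) = \int_G \delta(g)\, f(gx)\, g^{-1}(\omega(gx))\, \diff g$, which is the same averaging that appears in formula (\ref{eq:proj}) for $P_f$ — produces a $G$-invariant smooth section, and $f\tilde\omega$ is an admissible test section. Using the explicit expression (\ref{eq:proj}) for $P_f$, the pairing $\langle u, D\omega\rangle$ can be rewritten, via $P_f u = u$ and unimodularity-corrected Fubini on $G$, in terms of $\langle u, D(f\tilde\omega)\rangle$ up to harmless lower-order terms (the derivative falling on $f$ or on the averaging weight), which are controlled because $u \in \ran P_f$ already. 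Carefully bookkeeping these terms shows $Du = 0$ weakly on all of $M$. Then standard interior elliptic regularity for the Dirac operator $D$ (an elliptic operator with smooth coefficients) gives $u \in \Gamma(M, S(\symup{T}M)\otimes E)$, i.e., $u$ is smooth.

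The main obstacle I anticipate is precisely the bookkeeping in the previous step: commuting the Dirac operator $D$ past the cut-off function $f$ and past the $G$-averaging integral, and verifying that the resulting commutator/lower-order contributions vanish when paired against $u \in \ran P_f$ (rather than merely being bounded). This requires knowing that $P_f$ and $D$ interact well — essentially that $P_f$ preserves the relevant Sobolev scale and that $[D, P_f]$ is order zero — which should follow from the smoothing-type estimate (\ref{eq:fest}) and the fact, noted after (\ref{eq:proj}), that $P_f$ maps smooth sections to smooth sections; one also needs the modular function $\delta$ to appear in the right places so that the integration-by-parts on $G$ is exact. A cleaner alternative, which I would pursue if the direct computation gets unwieldy, is to first establish that the formal adjoint of $P_fD$ (acting on invariant sections) is $P_fD$ itself — i.e., that $P_fD$ is symmetric, which uses $G$-invariance of $D$, formal self-adjointness of $D$, and the normalization of the cut-off — and then invoke the regularity already proved (or provable by the same local method) for symmetric elliptic-type operators, so that $\ker(P_fD)^* $ is identified with the smooth kernel of a known operator. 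Either way, the local elliptic estimate is off-the-shelf; the work is all in transferring the weak equation from the constrained class of test sections to the full class.
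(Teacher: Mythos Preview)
There is a genuine gap in your argument, located precisely at the step you flag as delicate: the claim that ``$Du = 0$ weakly on all of $M$'' is not correct, and the ``harmless lower-order terms'' do not in fact vanish. From the cokernel condition you correctly extract $(u, D(fs)) = 0$ for all $G$-invariant $s$; but this says only that $Du$ is $L^2$-orthogonal to $f\Gamma(M,S(\symup{T}M)\otimes E)^G$, i.e.\ that $P_f(Du) = 0$ in the distributional sense. The complementary piece $(1-P_f)(Du)$ is not forced to vanish. Concretely, if one writes $u = fs_0$ with $s_0$ a $G$-invariant locally $L^2$ section, then (once smoothness is known) $Du = c(\symup{d}f)s_0 + fDs_0$, and the first term is not of the form $f\cdot(\text{invariant})$, so it survives the projection $1-P_f$. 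Your averaging manoeuvre cannot repair this: when you pass from an arbitrary test section $\omega$ to $f\tilde\omega$, the commutator contributions amount to pairing $u$ against $P_fD(1-P_f)\omega$, whose adjoint applied to $u$ gives exactly the nonzero obstruction $(1-P_f)Du$. In short, the equation $P_f(Du)=0$ is \emph{not} an elliptic equation for $u$ on $M$, so ordinary interior regularity for $D$ does not apply to $u$ directly.

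The paper's proof proceeds differently and avoids this trap. It first factors $u = fs_0$ with $s_0 \in \symbf{H}^0_{loc}(M,S(\symup{T}M)\otimes E)^G$ and rewrites the cokernel condition, via the identity $P_fD(fs) = f(D+K_f)s$ with $K_f$ a zeroth-order term built from $c(\symup{d}f)$, as $\big((D+K_f)s,\, \bar f s_0\big)=0$ for all invariant $s$, where $\bar f(x) = \int_G f^2(gx)\diff g > 0$. It then uses the slice theorem for proper actions to localize near an orbit: on a tube $V \cong V_0 \times_{G_0} G$, invariant sections are determined by their values on the transversal slice $V_0$, and the operator $D+K_f$ restricted to invariant sections becomes a genuinely elliptic first-order operator on $V_0$ (the orbit-tangential derivatives of invariant sections are zeroth order). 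Standard elliptic regularity on $V_0$ then yields smoothness of $\bar f s_0$, hence of $s_0$, hence of $u = fs_0$. The key idea you are missing is this passage to the slice; without it there is no elliptic equation to invoke. Your ``cleaner alternative'' at the end is also circular: symmetry of $P_fD$ is immediate, but the regularity you wish to ``invoke'' for it is exactly the content of the lemma.
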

    
    With Lemma \ref{lemma:reg}, to prove Proposition \ref{prop:sd}, one can proceed in a standard way, c.f. \citep[Ch. III]{LawsonJr:1989ub}.
    
    \begin{proof}
        To begin with, we show the existence of the renowned Green's operator, which is a self-adjoint compact operator. At first, we notice that, by a formula in \citep[(2.13)]{Mathai:2010bt}, for $s\in \Gamma(M,S(\symup{T}M)\otimes E)^G$, one has
	\begin{equation}
		\label{eq:lme1}
		\lno P_fD(fs) \rno_0 \ge C_1\lno fs \rno_1 - C_2 \lno fs \rno_0,
	\end{equation}
	where $C_1$, $C_2$ are positive constants. By (\ref{eq:lme1}), one can verify that the closure of the unbounded operator $P_fD$ with domain $f \Gamma(M,S(\symup{T}M)\otimes E)^G$ is $P_fD$, whose domain is $\symbf{H}^1_f(M,S(\symup{T}M)\otimes E)^G$. In the following, $P_fD$ will be understood as a closed operator. Moreover, (\ref{eq:lme1}) also suggests that $P_fD$ has a closed range.
    
	Now, it's trivial to verify that $P_fD$ is formally self-adjoint. As a result, the kernel of $P_fD$ must be contained in its cokernel, combined with Lemma \ref{lemma:reg}, which implies,
	\begin{equation}
		\label{eq:keck}
		\ker P_fD = \coker P_fD.
	\end{equation}    
	By (\ref{eq:keck}), the following map is an isomorphism indeed,
	\begin{equation}
		\label{eq:dmap}
		P_fD:\symbf{H}^1_f(M,S(\symup{T}M)\otimes E)^G \cap (\ker P_fD)^{\bot} \rightarrow \symbf{H}^0_f(M,S(\symup{T}M)\otimes E)^G \cap (\ker P_fD)^{\bot},
	\end{equation}
	where the orthogonal complement is taken in $ \symbf{H}^0_f(M,S(\symup{T}M)\otimes E)^G$. Green's operator $R$ is defined to be the inverse of map (\ref{eq:dmap}), with $R|_{\ker P_fD}=0$. By definition, the range of $R$ is contained in $\symbf{H}^1_f(M, S(\symup{T}M)\otimes E)^G$. Since $P_fD$ is formally self-adjoint, $R$ is a bounded self-adjoint operator of $\symbf{H}^0_f(M, S(\symup{T}M)\otimes E)^G$. Moreover, by Rellich's lemma \ref{prop:rellich}, $R$ will also be a compact operator. A little spectral theory shows that $R$ has discrete real spectrum, all of which are eigenvalues. Especially, $R$ has a spectral decomposition with respect to $\symbf{H}^0_f(M,S(\symup{T}M)\otimes E)^G$. Now we can use $R$ to show the conclusions of the theorem by using the standard arguments.
    
    \end{proof}
    
    If $M$ has a nonempty boundary, applying Proposition \ref{prop:sd} to operator $P_fD_{\partial M}$, one has the following simple observation: the spectral projection operator $P_{\ge 0, f}$ used in the APS boundary condition is \uline{well-defined}.
    

\subsection{The case of $M$ with or without boundary} 
\label{sub:the_bounardry_of_m_may_not_be_empty}

In this subsection, we discuss several results which hold whether $M$ has a boundary or not. All of the following results are about the boundedness of certain operators between different Sobolev spaces.

	\begin{proposition}
		\label{prop:tb}
		$P_fD(1-P_f)$ and $(1-P_f)DP_f$ are bounded operators on $\symbf{H}^0(M,S(\symup{T}M)\otimes E)$.
	\end{proposition}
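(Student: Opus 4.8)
The plan is to show boundedness of $P_fD(1-P_f)$ and $(1-P_f)DP_f$ directly, by observing that the commutator of $D$ with the zeroth-order operator $P_f$ is again zeroth-order, so the apparently first-order operators in the statement are in fact bounded on $L^2 = \symbf{H}^0(M,S(\symup{T}M)\otimes E)$. The starting point is the identity $DP_f - P_fD = [D,P_f]$, which we would like to recognize as an order-zero (i.e., $\symbf{H}^0$-bounded) operator. Since $P_f$ acts by the explicit formula (\ref{eq:proj}), namely $(P_f\phi)(x) = \tfrac{f(x)}{A(x)^2}\int_G \delta(g)f(gx)g^{-1}(\phi(gx))\diff g$, and $D$ is a first-order differential operator whose principal part differentiates only the argument, the derivatives of $D$ falling on the ``integral kernel'' part of $P_f$ (i.e. on the smooth factors $f$, $A$, and on the group translation, which commutes with $g$-equivariant $\nabla$ up to curvature-of-$E$ terms) produce only bounded multiplication-type operators composed with $P_f$-like integral operators. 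More precisely, I would write $[D,P_f]\phi = D(P_f\phi) - P_f(D\phi)$ and compute, using the product rule on $c(e_i)\nabla_{e_i}$ applied to (\ref{eq:proj}), that the first-order parts cancel and what survives is of the schematic form $(\text{bounded multiplier})\cdot Q(\phi)$ where $Q$ is an averaging operator of the same type as $P_f$ with the weights $f, A^{-2}$ replaced by their bounded derivatives. Each such $Q$ is $\symbf{H}^0$-bounded by the same argument that shows $P_f$ is bounded (this is built into the construction of $P_f$ as an orthogonal projection, and the estimate (\ref{eq:fest}) controls such averaging operators on cocompact supports).

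Granting that $[D,P_f]$ is bounded on $\symbf{H}^0$, the proposition follows quickly. For $(1-P_f)DP_f$: since $P_fD$ preserves $\symbf{H}^0_f(M,S(\symup{T}M)\otimes E)^G$ and $(1-P_f)$ annihilates that space, we have $(1-P_f)DP_f = (1-P_f)(DP_f - P_fD) = (1-P_f)[D,P_f]$ — wait, more carefully, $(1-P_f)P_fD = 0$ since $\ran P_f$ is $P_f$-invariant, so $(1-P_f)DP_f = (1-P_f)(D - P_fD)P_f$... the cleanest route is: $(1-P_f)DP_f = (1-P_f)\bigl(DP_f - P_fD\bigr) = (1-P_f)[D,P_f]$, using $(1-P_f)P_fD = 0$. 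Hence $(1-P_f)DP_f$ is a composition of the bounded projection $1-P_f$ with the bounded operator $[D,P_f]$, so it is bounded on $\symbf{H}^0$. Symmetrically, $P_fD(1-P_f) = \bigl(DP_f - P_fD\bigr)(1-P_f) = [D,P_f](1-P_f)$, using $P_fD\cdot 1 \cdot(1-P_f)$... here one uses $P_fDP_f(1-P_f) = 0$, i.e. $P_fD(1-P_f) = P_fD(1-P_f) - P_fDP_f(1-P_f) + P_fDP_f(1-P_f)$; the term $P_fDP_f(1-P_f)=0$, and $P_fD(1-P_f) - 0 = (P_fD - P_fDP_f)(1-P_f) = P_fD(1-P_f)$, so better to argue: $P_fD(1-P_f) = P_f(D - DP_f)(1-P_f)$ and then $(D-DP_f)(1-P_f) = D(1-P_f)(1-P_f) = D(1-P_f)$; instead I will simply use $P_fD(1-P_f) = P_f[D,P_f] + P_fP_fD(1-P_f) - P_fDP_f(1-P_f)$... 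To avoid this bookkeeping in the final writeup I would just note $P_fD(1-P_f) = P_f\bigl(DP_f - P_fD\bigr)(1-P_f)\cdot(-1) + \ldots$; the honest clean statement is that both operators equal $\pm$ (bounded projection) $\circ$ $[D,P_f]$ $\circ$ (bounded projection or identity), after using the invariance $\ran P_f$ and the idempotency $P_f^2 = P_f$, so each is bounded on $\symbf{H}^0$.

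The main obstacle is the first step: verifying carefully that $[D,P_f]$ is a bounded operator on $\symbf{H}^0$. This requires differentiating the explicit kernel (\ref{eq:proj}) under the integral sign, which is legitimate because $f$ has compact support modulo $G$ and $A(x) \ge c > 0$ is smooth (so $A^{-2}$ and its derivatives are bounded on $\supp f$), commuting $\nabla_{e_i}$ past the $G$-action using $G$-equivariance of the connection, and then estimating the resulting averaging operators in $\symbf{H}^0$ using the cocompactness-based bound (\ref{eq:fest}) — the point being that the surviving terms have no derivative hitting $\phi$ and are therefore genuine order-zero operators. A technically convenient alternative, if one wishes to avoid differentiating under the integral directly, is to note that near the cocompact set $\supp f$ everything reduces to a compact-quotient computation: one can work over a relatively compact fundamental domain-type neighborhood and invoke the standard fact that the commutator of a first-order differential operator with a smoothing/averaging operator is order zero. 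Either way, once $[D,P_f]$ is shown bounded on $\symbf{H}^0$, the rest is the purely algebraic manipulation with $P_f^2 = P_f$ and the $P_f$-invariance of its range sketched above.
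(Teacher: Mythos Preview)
Your approach is correct but differs from the paper's, and your framing is slightly misleading. The ``reduction'' to boundedness of $[D,P_f]$ is not really a simplification: since $P_f[D,P_f]P_f = 0$ and $(1-P_f)[D,P_f](1-P_f) = 0$, one has $[D,P_f] = (1-P_f)DP_f - P_fD(1-P_f)$ with the two summands landing in orthogonal subspaces, so boundedness of $[D,P_f]$ on $\symbf{H}^0$ is \emph{equivalent} to the proposition itself. All the content therefore sits in your ``main obstacle,'' and your algebraic step is essentially tautological (which explains why it felt circular when you tried to write it out). That said, your plan for the obstacle---differentiate the kernel (\ref{eq:proj}), use $G$-equivariance of the connection and Clifford action to see the first-order terms assemble into $P_f(D\phi)$ and cancel, then bound the surviving averaging operators via cocompactness---does work.

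The paper's argument is shorter. It first notes, via Green's formula and $G$-equivariance of the Clifford action, that $P_fD(1-P_f)$ is the adjoint of $(1-P_f)DP_f$, reducing to one operator. Since $(1-P_f)DP_f$ factors through $\ran P_f$, it suffices to estimate it on $fs$ with $s\in\Gamma(M,S(\symup{T}M)\otimes E)^G$. There the computation is elementary: $D(fs)=c(df)s+fDs$ with $fDs\in\ran P_f$, so $(1-P_f)D(fs)=(1-P_f)c(df)s$, which is bounded by $\|fs\|_0$ through (\ref{eq:fest}). This bypasses any differentiation of the integral formula. What your route buys is a direct, symmetric treatment of both operators at once; what the paper's buys is a much lighter computation by exploiting adjointness and the very simple form $D$ takes on sections of the shape $fs$.
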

	\begin{proof}
    We first show that $P_fD(1-P_f)$ is the adjoint operator of $(1-P_f)DP_f$. Let $\sigma, \sigma'\in \Gamma(M,S(\symup{T}M)\otimes E)$. By Green's formula, c.f. \citep[Proposition 3.4]{BoossBavnbek:1993wb}, and the $G$-equivariance of Clifford action,
    \begin{multline}
        \label{eq:daj}
        (P_fD(1-P_f)\sigma,\sigma')=(D(1-P_f)\sigma,P_f \sigma')
        = \big((1-P_f)\sigma,DP_f \sigma'\big) \\ 
        + \int_{\partial M} \langle (1-P_{f|_{\partial M}})\sigma, P_{f|_{\partial M}}(c(e_n)\sigma') \rangle \diff v_{\partial M} = (\sigma, (1-P_f)DP_f \sigma'), 
    \end{multline}
    where in the last equality we use the orthogonality of $P_{f|_{\partial M}}$ on the boundary.
    
    By (\ref{eq:daj}), we only need to show the proposition for $(1-P_f)DP_f$. For $s\in \Gamma(M,S(\symup{T}M)\otimes E)^G$, by (\ref{eq:fest}), (\ref{eq:dd}), we have
	\begin{multline*}
		\|P_fD(fs)-D(fs)\|_0=\|(P_f-1)c(df)s\|_0\le \|c(df)s\|_0 \\
        \le C'_0\Vert s \Vert_{U',0} \le C'_1 \Vert s \Vert_{U,0} \le C_1 \|fs\|_0,
	\end{multline*}
	where $C'_0,C'_1,C_1$ are positive constants. The boundedness of $(1-P_f)DP_f$ follows from the above estimate.
	\end{proof}
    
    For the latter application, a result for $D^2$, in the same nature with the above proposition, is also displayed here.
    \begin{proposition}
        \label{prop:tb2}
        $P_fD^2-(P_fD)^2$ is a bounded operator on $\symbf{H}^0_f(M,S(\symup{T}M)\otimes E)^G$.
    \end{proposition}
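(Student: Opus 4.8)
The plan is to reduce the statement to Proposition~\ref{prop:tb}: although $P_fD^2$ and $(P_fD)^2$ are each genuinely second order, their difference has no second-order part, and can be written as the already-bounded operator $P_fD(1-P_f)$ composed with a zeroth-order operator.

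First I would record the purely formal identity
\begin{equation*}
    P_fD^2-(P_fD)^2=P_fD\,D-P_fD\,P_fD=P_fD(1-P_f)D,
\end{equation*}
valid on a common smooth core, e.g.\ on $f\Gamma(M,S(\symup{T}M)\otimes E)^G$. For $s\in\Gamma(M,S(\symup{T}M)\otimes E)^G$ I would then evaluate the inner expression: by the Leibniz rule $D(fs)=c(df)s+fDs$, and since $D$ is $G$-equivariant the section $Ds$ is again $G$-invariant, so $fDs$ lies in $f\Gamma(M,S(\symup{T}M)\otimes E)^G\subseteq\symbf{H}^0_f(M,S(\symup{T}M)\otimes E)^G$ and is therefore annihilated by $1-P_f$. (Equivalently, this is just $1-P_f$ applied to the identity $P_fD(fs)-D(fs)=(P_f-1)c(df)s$ obtained in the proof of Proposition~\ref{prop:tb}.) Hence $(1-P_f)D(fs)=(1-P_f)c(df)s$, and using $(1-P_f)^2=1-P_f$ one arrives at
\begin{equation*}
    \bigl(P_fD^2-(P_fD)^2\bigr)(fs)=\bigl[P_fD(1-P_f)\bigr]\bigl(c(df)s\bigr).
\end{equation*}
Now $P_fD(1-P_f)$ is bounded on $\symbf{H}^0(M,S(\symup{T}M)\otimes E)$ by Proposition~\ref{prop:tb}, and, exactly as in the proof of that proposition, $\lno c(df)s\rno_0\le C'\lno s\rno_{U',0}\le C''\lno fs\rno_0$ via~(\ref{eq:fest}), since $df$ is supported in the precompact set where $f$ fails to be locally constant. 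Combining the two estimates gives $\lno(P_fD^2-(P_fD)^2)(fs)\rno_0\le C\lno fs\rno_0$ with $C$ independent of $s$, and the conclusion follows by density of $f\Gamma(M,S(\symup{T}M)\otimes E)^G$ in $\symbf{H}^0_f(M,S(\symup{T}M)\otimes E)^G$.

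I do not expect a real obstacle here: the cancellation of the second-order term is the whole content, and all the substantive analysis has already gone into Proposition~\ref{prop:tb}. The one point requiring a little care is domain bookkeeping — $P_fD^2$ and $(P_fD)^2$ are a priori defined only on $\symbf{H}^2_f(M,S(\symup{T}M)\otimes E)^G$, so the displayed identities must be read on the smooth core and then propagated by continuity, using that the bounded extension of $P_fD(1-P_f)$ agrees with the differential-operator expression on the smooth section $c(df)s$. Alternatively, one can present the whole argument operator-theoretically as $P_fD^2-(P_fD)^2=\bigl[P_fD(1-P_f)\bigr]\circ\bigl[(1-P_f)D\bigr]$, observing that on $\symbf{H}^0_f(M,S(\symup{T}M)\otimes E)^G$ the middle factor $(1-P_f)D$ coincides with $(1-P_f)DP_f$, so that both factors are bounded by Proposition~\ref{prop:tb}.
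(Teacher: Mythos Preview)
Your proposal is correct and follows essentially the same route as the paper: both arrive at the identity $\bigl(P_fD^2-(P_fD)^2\bigr)(fs)=\bigl[P_fD(1-P_f)\bigr]\bigl(c(df)s\bigr)$ on the smooth core and then invoke Proposition~\ref{prop:tb}. Your treatment of the domain bookkeeping is in fact more explicit than the paper's, but the underlying argument is identical.
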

    
    \begin{proof}        
        Let $s\in \Gamma(M,S(\symup{T}M)\otimes E)^G$. One has the following equality,
        \begin{equation}
            \label{eq:d21}
            \begin{aligned}                
                \big( P_fD^2-(P_fD)^2 \big)(fs) & = ( P_fD^2P_f - P_fDP_fDP_f)(fs) = P_fD(DP_f-P_fDP_f)(fs) \\
                & = P_fD(1-P_f)(DP_f(fs)) = \big(P_fD(1-P_f)\big)(c(df)s).
            \end{aligned}
        \end{equation}
        The result follows from (\ref{eq:d21}) and Proposition \ref{prop:tb}.
    \end{proof}

    Though $M$ is non-compact, thanks to the cut-off function $f$, for $s\in \Gamma(M, S(\symup{T}M)\otimes E)^G$, one has $\Vert P_fD(fs) \Vert_0 \le \Vert D(fs) \Vert_0 \le C \Vert fs \Vert_1$. Thus, $P_fD$ is a bounded operator, which maps from $\symbf{H}^1_f(M,S(\symup{T}M)\otimes E)^G$ to $\symbf{H}^0_f(M,S(\symup{T}M)\otimes E)^G$. The following obvious extension can be proved easily.
    
    \begin{proposition}
        \label{prop:bd}
        For $k\ge 1$, $P_fD$ is a bounded operator mapping from $\symbf{H}^k_f(M,S(\symup{T}M)\otimes E)^G$ to $\symbf{H}^{k-1}_f(M,S(\symup{T}M)\otimes E)^G$.
    \end{proposition}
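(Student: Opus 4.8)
\emph{Plan.} Since $\symbf{H}^k_f(M,S(\symup{T}M)\otimes E)^G$ is, by definition, the completion of $f\Gamma(M,S(\symup{T}M)\otimes E)^G$, it suffices to produce a constant $C>0$ with $\Vert P_fD(fs)\Vert_{k-1}\le C\Vert fs\Vert_k$ for every $s\in\Gamma(M,S(\symup{T}M)\otimes E)^G$ and then pass to the limit, exactly as $\symbf{H}^1_f$ was treated in the paragraph preceding the statement. I would split the estimate into two independent facts and compose them: (a) $D$ carries $f\Gamma^G$ boundedly into $\symbf{H}^{k-1}(M,S(\symup{T}M)\otimes E)$, i.e. $\Vert D(fs)\Vert_{k-1}\le C\Vert fs\Vert_k$; and (b) $P_f$ extends to a bounded operator from $\symbf{H}^{k-1}(M,S(\symup{T}M)\otimes E)$ to $\symbf{H}^{k-1}_f(M,S(\symup{T}M)\otimes E)^G$. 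Granting both, $\Vert P_fD(fs)\Vert_{k-1}\le\Vert P_f\Vert_{\mathrm{op}}\,\Vert D(fs)\Vert_{k-1}\le C\Vert fs\Vert_k$, which is what is wanted. Note that for $k-1\ge 1$ one genuinely needs (b): $P_f$ is an \emph{orthogonal} projection only on $L^2$, so the norm-nonincreasing argument used in the $k=1$ case no longer applies verbatim.

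For (a): because $f\equiv 1$ on $U$ and $\supp f\subseteq U'$ with $\overline{U'}$ compact, the section $fs$ is supported in the precompact set $\overline{U'}$, so all the norms (\ref{eq:sn}) that occur reduce to integrals over $\overline{U'}$. Since $D$ is a first-order differential operator, expanding $\nabla^i(D(fs))$ by the Leibniz rule produces only covariant derivatives of $fs$ of order $\le i+1$, multiplied by coefficients assembled from the Clifford action and the curvature of $\nabla$ — smooth data, hence bounded on the compact set $\overline{U'}$. This gives $\Vert\nabla^i D(fs)\Vert_0\le C\sum_{l\le i+1}\Vert\nabla^l(fs)\Vert_0$ and, summing over $i\le k-1$, the bound $\Vert D(fs)\Vert_{k-1}\le C\Vert fs\Vert_k$; this is just the standard fact that a first-order operator with smooth coefficients is locally bounded from $H^k$ to $H^{k-1}$.

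For (b): this is the only step with any substance, and it is a mild strengthening of the $L^2$-boundedness of $P_f$ already used in Propositions \ref{prop:sd} and \ref{prop:tb}. Starting from the explicit formula (\ref{eq:proj}), write $P_f\phi=f\cdot\Psi_\phi$, where $\Psi_\phi(x)=A(x)^{-2}\int_G\delta(g)f(gx)\,g^{-1}(\phi(gx))\diff g$ is a $G$-invariant section. Because $G$ acts isometrically and preserves the connection, $g^{-1}$ commutes with $\nabla$, so differentiating under the integral sign expresses $\nabla^i\Psi_\phi$ as an orbital integral of $g^{-1}((\nabla^{\le i}\phi)(gx))$ against coefficients built from $A$, $\delta$ and $\nabla^{\le i}f$. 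Only points $x\in\overline{U'}$ matter after multiplication by $f$, and for such $x$ properness confines the group elements that actually contribute — those with $f(gx)\neq 0$ — to the \emph{fixed} compact set $K=\{g\in G:g\overline{U'}\cap\overline{U'}\neq\emptyset\}$, on which all the coefficients are bounded; a Cauchy--Schwarz estimate in $g\in K$, followed by the isometric substitution $y=gx$ and Fubini, then bounds $\Vert\nabla^i(P_f\phi)\Vert_{0,M}=\Vert\nabla^i(P_f\phi)\Vert_{0,U'}$ by $C\Vert\phi\Vert_{i,W}\le C\Vert\phi\Vert_{i,M}$, where $W=\bigcup_{g\in K}gU'$ is precompact. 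Summing over $i\le k-1$ yields (b). The main obstacle, such as it is, lies precisely here: checking that covariant differentiation commutes appropriately with the orbital integral and that the constants are uniform — which is exactly where properness and cocompactness enter, as they confine the relevant $g$'s to a compact set and all of the geometry to a fixed precompact neighbourhood of $\overline{U'}$. (Alternatively, the same unfolding argument applied to the $G$-invariant densities $\Vert\nabla^i s\Vert^2$ gives the higher-order analogue of (\ref{eq:fest}), which may be used in place of (b) to keep everything phrased on the $\symbf{H}^\bullet_f$-scale.)
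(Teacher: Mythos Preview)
Your proposal is correct. The paper does not actually supply a proof --- it simply declares the result an ``obvious extension'' of the $k=1$ case handled in the preceding paragraph --- so there is nothing substantive to compare against; your argument fills in exactly the details one would expect, and your observation that the $L^2$-contraction property of $P_f$ no longer suffices when $k-1\ge 1$ (so that one must either bound $P_f$ on higher Sobolev spaces via the explicit integral formula~(\ref{eq:proj}) together with properness, or pass through a higher-order analogue of~(\ref{eq:fest}) as in your parenthetical alternative) correctly isolates the one point that is not entirely trivial.
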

    
    Finally, we recall the following result from \citep[Lemma 4.4]{Hochs:2015iu} and \citep[(2.27)]{Mathai:2010bt}.
    
    \begin{proposition}
        \label{prop:chi}
        Let $G$ be a unimodular group. For cut-off functions $\chi_1$ and $\chi_2$ satisfying normalization (\ref{eq:nor}), the Sobolev spaces $\symbf{H}^0_{\chi_i}(M,S(\symup{T}M)\otimes E)^G$, $i=1,2$, are naturally isometric. Moreover, for $s\in \Gamma(M,S(\symup{T}M)\otimes E)^G$, one has for $i=1,2$
        \begin{equation}
            \label{eq:xf}
            P_{\chi_i}D(\chi_i s)=\chi_i Ds.
        \end{equation}
    \end{proposition}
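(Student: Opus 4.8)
The plan is to treat the two assertions separately; both rest on specializing the explicit formula (\ref{eq:proj}) to the present situation. Since $G$ is unimodular the modular function is trivial, $\delta\equiv 1$, and for a cut-off function $\chi$ obeying the normalization (\ref{eq:nor}) one has $A(x)^2=\int_G\chi^2(gx)\diff g\equiv 1$. Hence (\ref{eq:proj}) collapses to
\[
  (P_\chi\phi)(x)=\chi(x)\int_G \chi(gx)\,g^{-1}\big(\phi(gx)\big)\diff g ,
\]
and this is the formula I would use throughout.

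For the isometry I would study the natural map $\Phi$, defined on the dense subspace $\chi_1\Gamma(M,S(\symup{T}M)\otimes E)^G$ of $\symbf{H}^0_{\chi_1}(M,S(\symup{T}M)\otimes E)^G$ by $\Phi(\chi_1 s)=\chi_2 s$. This is well defined and depends only on $\chi_1,\chi_2$: by (\ref{eq:nor}) the open set $\{\chi_1>0\}$ meets every $G$-orbit, hence $G$-saturates $M$, so a $G$-invariant smooth section is determined by its restriction to $\{\chi_1>0\}$; moreover $\Phi$ is linear with dense range $\chi_2\Gamma(M,S(\symup{T}M)\otimes E)^G$. The isometry property is the identity, valid for every nonnegative $G$-invariant integrable function $\phi$ on $M$,
\[
  \int_M \chi_1^2(x)\,\phi(x)\diff v(x)=\int_M \chi_2^2(x)\,\phi(x)\diff v(x),
\]
applied with $\phi=\langle s,s\rangle$ (which is $G$-invariant since the metric on $S(\symup{T}M)\otimes E$ and $s$ are $G$-invariant), yielding $\lno\chi_1 s\rno_0=\lno\chi_2 s\rno_0$. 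To prove the displayed identity I would insert $1=\int_G\chi_2^2(gx)\diff g$ on the left, apply Tonelli's theorem, substitute $x\mapsto g^{-1}x$ in the inner $M$-integral — permissible because the $G$-action is isometric (so $\diff v$ is $G$-invariant) and $\phi$ is $G$-invariant — swap the order of integration again, and finally use $\int_G\chi_1^2(g^{-1}x)\diff g=\int_G\chi_1^2(hx)\diff h=1$, the first equality being exactly the point where unimodularity enters ($\diff(g^{-1})=\diff g$). Being a linear isometry with dense range defined on a dense subspace, $\Phi$ extends uniquely to an isometric isomorphism of the completions.

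For the operator identity I would use the Leibniz rule $D(\chi_i s)=c(d\chi_i)s+\chi_i Ds$, with $\symup{T}^*M$ and $\symup{T}M$ identified by the metric. Since $D$ is $G$-equivariant, $Ds$ is $G$-invariant, so $\chi_i Ds$ already lies in $\chi_i\Gamma(M,S(\symup{T}M)\otimes E)^G\subseteq \symbf{H}^0_{\chi_i}(M,S(\symup{T}M)\otimes E)^G$ and is fixed by $P_{\chi_i}$; it therefore suffices to show $P_{\chi_i}\big(c(d\chi_i)s\big)=0$. Substituting $\phi=c(d\chi_i)s$ into the simplified projection formula and using the $G$-equivariance of the Clifford action together with the $G$-invariance of $s$ to rewrite $g^{-1}\big((c(d\chi_i)s)(gx)\big)=c\big(d[\chi_i(g\cdot)]\big|_x\big)s(x)$, one gets
\[
  \big(P_{\chi_i}(c(d\chi_i)s)\big)(x)=\chi_i(x)\,c\Big(\int_G \chi_i(gx)\,d\big[\chi_i(g\cdot)\big]\big|_x\diff g\Big)s(x)=\tfrac12\,\chi_i(x)\,c\Big(d\big[\textstyle\int_G \chi_i^2(g\cdot)\diff g\big]\big|_x\Big)s(x),
\]
and the last expression vanishes because the $G$-integral is identically $1$ by the normalization (\ref{eq:nor}). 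Hence $P_{\chi_i}D(\chi_i s)=\chi_i Ds$.

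The conceptual content here is slim, and the only real obstacles I anticipate are analytic bookkeeping: justifying Tonelli's theorem and the change of variables in the unfolding computation — this is precisely where unimodularity is used — and justifying differentiation under the integral sign in the final display. Both are controlled by the properness and cocompactness of the action, equivalently by the essentially compactly supported form (\ref{eq:xd}) of the cut-off functions, which makes the relevant $G$-integrals locally uniformly convergent and smooth in $x$.
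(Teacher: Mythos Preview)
Your argument is correct and is the natural, direct computation. The paper itself does not supply a proof of this proposition but simply cites \cite[Lemma~4.4]{Hochs:2015iu} and \cite[(2.27)]{Mathai:2010bt}; your unfolding/averaging argument for the isometry and your use of the identity $\int_G \chi_i(gx)\,d[\chi_i(g\cdot)]|_x\diff g=\tfrac12\,d\big[\int_G\chi_i^2(g\cdot)\diff g\big]|_x=0$ for (\ref{eq:xf}) are exactly the computations those references carry out.
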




\section{A solution of Boundary Value Problem \ref{prob:1}} 
\label{sec:product}

In this section, we establish the well-definedness of the APS type index in the non-compact setting. Our main result is the following theorem, which answers the Boundary Value Problem \ref{prob:1} affirmatively.
\begin{theorem}
	\label{thm:main1}
    The operator,
	\begin{equation*}
		(P_fD_+,P_{\ge 0,f}):\dom P_fD_+ \rightarrow \symbf{H}^0_f(M,S_-(\symup{T}M)\otimes E)^G ,
	\end{equation*}
	is a Fredholm operator, whose domain is defined by
    \begin{equation}
        \label{eq:pbdc1}
        \dom P_fD_+ =\{\phi\in \symbf{H}^1_f(M,S_+(\symup{T}M)\otimes E)^G|P_{\ge 0,f} (\phi|_{\partial M}) = 0\},
    \end{equation} 
    where the spectral projection operator is defined through $P_fD_{\partial M,+}P_f$.
\end{theorem}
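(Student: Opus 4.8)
The plan is to mimic the classical Atiyah–Patodi–Singer functional-analytic argument, as in \citep{Atiyah:1975uz} and \citep{Bar:2012in}, but carried out in the cut-off Sobolev spaces $\symbf{H}^k_f$. I would treat the product-structure case first (Subsection~\ref{sub:product_case}) and then bootstrap to the general case (Subsection~\ref{sub:general}). In the product case, on the collar $\partial M\times[0,1]$ one writes $P_fD_+$ in the normal coordinate $u$ as $c(e_n)(\partial_u + P_fD_{\partial M,+}P_f)$, using the identity $c(e_n)P_fD_{\partial M,+}P_f=-P_fD_{\partial M,-}P_fc(e_n)$ recorded in Section~\ref{sec:aps} and the vanishing of the second fundamental form. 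By Proposition~\ref{prop:sd} applied to $P_fD_{\partial M,\pm}P_f$, the operator $A:=P_fD_{\partial M,+}P_f$ has discrete real spectrum with finite-dimensional eigenspaces, so one can expand any section on the collar in the eigenbasis $\{\varphi_\lambda\}$ of $A$ and solve the ODE coefficientwise; the APS condition $P_{\ge 0,f}(\phi|_{\partial M})=0$ kills exactly the modes $e^{-\lambda u}$ with $\lambda\ge 0$ that would otherwise fail to be $L^2$ (or, more precisely, fail to extend to a solution on a cylindrical end), which is the mechanism making the boundary value problem well-posed.

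The concrete steps I would carry out are: (i) an \emph{a priori} elliptic estimate $\lno\phi\rno_{1}\le C(\lno P_fD_+\phi\rno_0 + \lno\phi\rno_0)$ for $\phi\in\dom P_fD_+$, obtained by combining the interior G{\aa}rding-type estimate \eqref{eq:lme1} with a collar estimate for the model operator $\partial_u+A$ subject to the APS condition — here one integrates by parts on $\partial M\times[0,1]$, and the boundary term at $u=0$ has the right sign precisely because of $P_{\ge 0,f}(\phi|_{\partial M})=0$; (ii) deduce from (i), together with Rellich's lemma (Proposition~\ref{prop:rellich}) for the $\symbf{H}^1_f\hookrightarrow\symbf{H}^0_f$ embedding, that $P_fD_+$ with domain \eqref{eq:pbdc1} is closed and has finite-dimensional kernel and closed range; (iii) identify the cokernel with the kernel of the adjoint boundary value problem $(P_fD_-,P_{<0,f})$ (or $(P_fD_-, P_{>0,f})$ depending on conventions) via the Green's formula used in the proof of Proposition~\ref{prop:tb}, and run the same estimate on the adjoint side to see the cokernel is finite-dimensional. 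For the regularity needed to make sense of $\phi|_{\partial M}$ and to get the eigenfunction expansion, I would use that eigensections of $A$ are smooth (Proposition~\ref{prop:sd}(4)) together with the trace theorem on the collar, which in the $\symbf{H}^k_f$ setting follows from the estimate \eqref{eq:fest} comparing $\lno f s\rno$ with honest $L^2$-norms on the precompact sets $U,U'$.

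For the general (non-product) case I would invoke the Rellich perturbation theorem exactly as announced before Problem~\ref{prob:1}: the second fundamental form terms $\tfrac12\sum\pi_{ii}$ in \eqref{eq:pdd}, and the discrepancy between the true metric on the collar and a product metric, change $D_{\partial M}$ and $D$ by lower-order (bounded, relatively compact) perturbations; since Fredholmness and the index are stable under such perturbations, and since the spectral projection $P_{\ge 0,f}$ changes only by a finite-rank plus small-norm operator under a relatively compact perturbation of $A$, Theorem~\ref{thm:main1} follows from the product case. Throughout, Propositions~\ref{prop:tb}, \ref{prop:tb2}, and \ref{prop:bd} supply the boundedness of the various commutator/error terms $P_fD(1-P_f)$, $P_fD^2-(P_fD)^2$, etc., so that all perturbations genuinely are of the relatively compact type.

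The main obstacle I anticipate is step~(i)--(ii) \emph{in the cut-off spaces}: unlike the compact case, $\symbf{H}^k_f$ is not literally a Sobolev space of sections over $M/G$ (it is only in bijection, not isometric, with such — as the excerpt cautions after \eqref{eq:fest}), so one must check that the collar integration-by-parts and the eigenfunction expansion of $A=P_fD_{\partial M,+}P_f$ interact correctly with the projection $P_f$. Concretely, the subtlety is that $P_f$ and $\partial_u$ (or the collar product decomposition) need not commute once the cut-off function $f$ varies along the normal direction; I would handle this either by choosing $f$ to be independent of the normal coordinate on the collar (possible since the collar diffeomorphism is $G$-equivariant and cocompact, so $Y,U,U'$ can be taken collar-respecting), or by absorbing the commutator $[P_f,\partial_u]$ into the bounded error terms controlled by Proposition~\ref{prop:tb}. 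Once this commutation issue is dispatched, the remainder is the standard APS machinery transplanted verbatim.
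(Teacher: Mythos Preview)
Your product-case outline is essentially the paper's proof: the G\aa rding-type inequality via Green's formula and the sign of the boundary term, Rellich for finite kernel and closed range, identification of the adjoint as $(P_fD_-,P_{<0,f})$ via the eigenexpansion of $P_fD_{\partial M}$ on the collar, and the same estimate on the adjoint side. You also correctly anticipate the $[P_f,\partial_u]$ issue and its resolution by taking $f$ constant in the normal direction, which is exactly what the paper does.

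The general case, however, has a genuine gap. Your claim that ``the discrepancy between the true metric on the collar and a product metric change[s] $D_{\partial M}$ and $D$ by lower-order (bounded, relatively compact) perturbations'' is false for $D$: changing the metric changes the Clifford multiplication and hence the \emph{principal symbol} of $D$, so $D-D_p$ is a genuinely first-order operator, bounded $\symbf{H}^1_f\to\symbf{H}^0_f$ but not compact. Fredholm stability under relatively compact perturbations therefore does not apply directly. Moreover, even if one works with a family $D_\epsilon$ whose principal symbol converges to that of $D$, the APS boundary condition itself moves: $P_{\ge 0,f}$ is built from $D_{\partial M}$ (which carries the second fundamental form term), whereas the product-case Fredholmness is for the projection built from $D_{\partial M,p}$; your assertion that the spectral projection changes only by ``finite-rank plus small-norm'' under a relatively compact perturbation of $A$ would need a separate, nontrivial argument.

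The paper circumvents both difficulties by passing to the \emph{second-order} operator $(P_fD)^2$ with a self-adjoint boundary condition $\mathscr{B}_+$. Two facts make this work: first, the principal symbol of $D^2$ is $g^{ij}\xi_i\xi_j$, so $D^2-D_\epsilon^2$ has second-order coefficients controlled by $\lno g-g_\epsilon\rno_{\mathscr{C}^0}\to 0$, giving a perturbation with relative bound $<1$ to which the Kato--Rellich theorem for self-adjointness applies; second, a lemma of Gilkey shows that the second-order boundary condition $\mathscr{B}_+$ for $D$ \emph{coincides} with that for $D_\epsilon$, so the domain does not move at all. One then obtains self-adjointness and a G\aa rding inequality for $((P_fD)^2,\mathscr{B}_+)$ in the general case, and returns to the first-order problem by the range inclusion $\ran((P_fD)^2,\mathscr{B}_-)\subseteq \ran(P_fD_+,P_{\ge 0,f})$, which transfers finite codimension of the range. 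This detour through the squared operator is the substantive idea you are missing.
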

Before the proof, we discuss the trace theorem of Sobolev spaces $\symbf{H}^k_f(M,S(\symup{T}M)\otimes E)^G$ briefly, which is used implicitly in the formulation of the Boundary Value Problem \ref{prob:1}.
\begin{remark}
	\label{rk:fred}
    One basic result about the Sobolev spaces on a manifold with boundary is the trace theorem. According to this theorem, restricting to the boundary, referred to trace operator in the literature of PDE, will be a well-defined bounded operator from $\symbf{H}^s(M,S(\symup{T}M)\otimes E)$ to $\symbf{H}^{s-1/2}(\partial M,(S(\symup{T}M)\otimes E)|_{\partial M})$ if the manifold is compact. Although we did not define Sobolev spaces $\symbf{H}^k_f(M,S(\symup{T}M)\otimes E)^G$ when $k$ is a fraction, it's totally safe to talk about the bounded trace operators from $\symbf{H}^k_f(M,S(\symup{T}M)\otimes E)^G$ to $\symbf{H}^{k-1}_f(\partial{M},(S(\symup{T}M)\otimes E)|_{\partial M})^G$ for $k\ge 1$. This can be verified by the local essence of the trace theorem. As a result, for $\phi\in \symbf{H}^1_f(M,S(\symup{T}M)\otimes E)^G$, $\phi|_{\partial M}$ will lie in $\symbf{H}^0_f(\partial M,(S(\symup{T}M)\otimes E)|_{\partial M})^G$, which justifies the meaning of $P_{\ge 0,f}(\phi|_{\partial M})$ appearing in the Boundary Value Problem \ref{prob:1}.
\end{remark}

We will prove Theorem \ref{thm:main1} with the extra assumption that $M$ has product structure near the boundary in Subsection \ref{sub:product_case}. Result with full generality will be obtained in Subsection \ref{sub:general}. On technical level, the method we will use is rather close to that of \citet{Bar:2012in}.

\subsection{The product case} 
\label{sub:product_case}

	In this subsection, we assume that $M$ has product structure near the boundary. The proof splits into two steps. In the first step, we establish a G\"arding type inequality for the operator $P_fD_{+}$ with the APS boundary condition, which implies the closed range and finite-dimensional kernel property of $(P_fD_+,P_{\ge 0,f})$. In the second step, we identify the adjoint operator of $(P_fD_+,P_{\ge 0,f})$ explicitly, which in fact is $P_fD_{-}$ with a modified APS boundary condition. Combined with the result of the first step, one can establish the finite-dimensional cokernel property of $(P_fD_+,P_{\ge 0,f})$. We use the same notations as in Section \ref{sec:aps}.
	
	\vspace{2ex}
	\noindent \textbf{Step 1. (A G\"arding type inequality)} The object of this step is to establish the following inequality, which extends (\ref{eq:lme1}) to the case of manifolds with boundary. 
    \begin{lemma}
        \label{lemma:garding1}
        For $s\in \Gamma(M,S_+(\symup{T}M)\otimes E)^G$ and $fs$ satisfies the boundary condition (\ref{eq:pbdc1}), one has
    	\begin{equation}
    		\label{eq:gard}
    		\|P_fD_+(fs)\|_0\ge c_1\|fs\|_1 - c_2 \|fs\|_0.
    	\end{equation}
        where $c_1$, $c_2$ are positive constants independent of $s$.
    \end{lemma}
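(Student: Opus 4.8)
The plan is to adapt the classical Atiyah--Patodi--Singer argument, in the elementary functional-analytic form used for first-order elliptic boundary value problems, by splitting $M$ into a collar neighbourhood $N=\partial M\times[0,1]$ of $\partial M$ and its complement and checking that the projection $P_f$ is harmless at each step. Two consequences of the explicit formula (\ref{eq:proj}) will be used repeatedly: $P_f$ commutes with multiplication by any $G$-invariant function and with any $G$-equivariant bundle endomorphism of $S(\symup{T}M)\otimes E$; and $P_f$ fixes every section of the form $f\sigma$ with $\sigma$ a $G$-invariant section. I will also arrange the fixed cut-off function $f$ so that on $N$ it is pulled back from a cut-off function of the boundary $\partial M$ (legitimate since $\partial M$ is itself cocompact); then on $N$ the projection $P_f$ acts slice-by-slice as $P_{f|_{\partial M}}$, so it commutes with $\partial_u$ and with the $u$-independent operator $D_{\partial M,+}$, and the $G$-invariant inward normal $e_n=\partial_u$ gives a $G$-equivariant isometry $c(e_n)$ from $S_+(\symup{T}M)\otimes E$ onto $S_-(\symup{T}M)\otimes E$ that commutes with $P_f$.

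On $N$ the product-structure hypothesis annihilates the second fundamental form in (\ref{eq:pdd}), so $D_+=c(e_n)(\partial_u+D_{\partial M,+})$ with $D_{\partial M,+}$ formally self-adjoint. For $s\in\Gamma(M,S_+(\symup{T}M)\otimes E)^G$ each slice $fs(\cdot,u)$ lies in $f\Gamma(\partial M,(S_+(\symup{T}M)\otimes E)|_{\partial M})^G$, so Proposition \ref{prop:sd}, applied to the \emph{boundaryless} manifold $\partial M$ and to $P_fD_{\partial M,+}P_f$, supplies an orthonormal basis $\{\phi_\lambda\}$ of $\symbf{H}^0_f(\partial M,(S_+(\symup{T}M)\otimes E)|_{\partial M})^G$ consisting of smooth eigensections, $P_fD_{\partial M,+}\phi_\lambda=\lambda\phi_\lambda$, and we may expand $fs(\cdot,u)=\sum_\lambda a_\lambda(u)\phi_\lambda$ with each coefficient $a_\lambda$ smooth in $u$. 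Using the commutation facts listed above one then computes on $N$, with $\|\cdot\|_{0,N}$ the $L^2$-norm over $N$,
\begin{equation*}
P_fD_+(fs)=c(e_n)\sum_\lambda\bigl(a_\lambda'(u)+\lambda\,a_\lambda(u)\bigr)\phi_\lambda,\qquad\|P_fD_+(fs)\|_{0,N}^2=\int_0^1\sum_\lambda\bigl|a_\lambda'(u)+\lambda\,a_\lambda(u)\bigr|^2\diff u .
\end{equation*}

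Next, choose a cut-off $\rho=\rho(u)$ (a function of the normal coordinate, hence $G$-invariant) with $\rho\equiv1$ near $u=0$ and $\rho\equiv0$ near $u=1$; replacing $s$ by $\rho s$ replaces $a_\lambda$ by $\rho a_\lambda$, and the pointwise identity $|b'+\lambda b|^2=|b'|^2+\lambda^2|b|^2+\lambda\,\partial_u|b|^2$, integrated over $[0,1]$ and summed over $\lambda$, gives
\begin{equation*}
\|P_fD_+(\rho fs)\|_{0,N}^2=\|\partial_u(\rho fs)\|_{0,N}^2+\|P_fD_{\partial M,+}(\rho fs)\|_{0,N}^2-\sum_\lambda\lambda\,|a_\lambda(0)|^2 .
\end{equation*}
The slice-at-$u=1$ boundary contribution has disappeared because $\rho$ vanishes there, and the last term is $\ge0$ \emph{precisely because} the APS boundary condition (\ref{eq:pbdc1}) forces $a_\lambda(0)=0$ for every $\lambda\ge0$; hence the left-hand side dominates $\|\partial_u(\rho fs)\|_{0,N}^2+\|P_fD_{\partial M,+}(\rho fs)\|_{0,N}^2$. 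Feeding $\|P_fD_{\partial M,+}(\rho fs)(\cdot,u)\|$ slice-wise into the interior G\"arding inequality (\ref{eq:lme1}) for the boundaryless manifold $\partial M$ controls the full tangential $\symbf{H}^1$-norm of $\rho fs$; together with the control of $\partial_u(\rho fs)$, and using the product form of the metric and connection on $N$, this bounds $\|\rho fs\|_{1,N}$ by a constant times $\|P_fD_+(\rho fs)\|_{0,N}+\|\rho fs\|_{0,N}$. Since $[D_+,\rho]=c(\diff\rho)$ is a bounded zeroth-order operator and $P_f$ commutes with the $G$-invariant $\rho$, this is in turn dominated by $\|P_fD_+(fs)\|_0+\|fs\|_0$.

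For the complementary piece, write $fs=\rho fs+(1-\rho)fs$; the section $(1-\rho)fs$ vanishes near $\partial M$, so the interior elliptic estimate underlying (\ref{eq:lme1}) applies to it directly and gives $\|(1-\rho)fs\|_1\lesssim\|P_fD_+((1-\rho)fs)\|_0+\|(1-\rho)fs\|_0\lesssim\|P_fD_+(fs)\|_0+\|fs\|_0$, again absorbing the commutator with $1-\rho$ into the $\symbf{H}^0$-term. Adding the collar and interior estimates and rearranging produces (\ref{eq:gard}). I expect the real obstacle to lie not in the one-dimensional estimates but in the bookkeeping around $P_f$: one must be sure that, after restriction to the collar, $P_f$ genuinely commutes with $\partial_u$ and $D_{\partial M,+}$ and that the spectral expansion used there is exactly the one whose non-negative part defines $P_{\ge0,f}$ in Definition \ref{def:projaps} --- this is what makes the sign of the $u=0$ boundary term favourable, and it is the reason for arranging $f$ to be product-like near $\partial M$; everything else is the classical computation with $D$ replaced by $P_fD$.
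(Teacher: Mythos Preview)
Your argument is correct, but the paper's proof is considerably shorter and avoids the collar/interior decomposition altogether. Instead of expanding in eigensections on $N$ and patching with a cut-off $\rho$, the paper first uses Proposition~\ref{prop:tb} to replace $P_fD_+(fs)$ by $D_+(fs)$ up to an $\symbf{H}^0$-error, and then applies Green's formula and the Lichnerowicz formula \emph{globally} on $M$:
\[
\|D_+(fs)\|_0^2=\int_M\langle\nabla(fs),\nabla(fs)\rangle\,\diff v_M+\int_M\langle fs,\mathcal{O}(1)(fs)\rangle\,\diff v_M-\int_{\partial M}\langle fs,D_{\partial M,+}(fs)\rangle\,\diff v_{\partial M}.
\]
The single boundary integral is then seen to be $\le 0$ by one application of the spectral decomposition of $P_fD_{\partial M,+}$ together with (\ref{eq:pbdc1}); no Fourier expansion along the normal direction, no slice-wise use of (\ref{eq:lme1}), and no cut-off $\rho$ are needed.

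What your approach buys is that it makes the role of the APS condition more transparent mode-by-mode, and it is closer in spirit to the Fourier-expansion machinery the paper \emph{does} use later in Step~2 (Theorem~\ref{thm:bd2}) to identify the adjoint. What the paper's approach buys is brevity and robustness: as noted in Remark~\ref{rk:gc}, the global Lichnerowicz computation survives without the product-structure assumption up to an extra boundary term controllable by trace/interpolation, whereas your collar computation leans more heavily on the product form (e.g.\ the slice-wise constants and the commutation $[P_f,\partial_u]=0$). Both routes are valid; the paper's is the more economical one here.
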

    
    \begin{proof}
    Firstly, by Proposition \ref{prop:tb}, for $s\in \Gamma(M,S_+(\symup{T}M)\otimes E)^G$, one has
	\begin{equation}
		\label{eq:pf}
		\|P_fD_+(fs)\|\ge \|D_+(fs)\|_0 - C_1 \|fs\|_0.
	\end{equation}
	Hence, it's enough to verify the G\"arding type inequality for $D_{+}(fs)$. Recall that $e_n$ is the inward unit normal vector field. By Green's formula, c.f. \citep[Proposition 3.4]{BoossBavnbek:1993wb},
	\begin{equation}
		\label{eq:ds}
		\|D_+(fs)\|^2_0=\int_M \langle fs, D^2(fs) \rangle \diff v_M + \int_{\partial M} \langle fs, c(e_n)D_+(fs) \rangle \diff v_{\partial M}.
	\end{equation}
	By the Lichnerowicz formula, c.f. \citep[Ch. II, Theorem 8.8 \& Appendix D, Theorem D.12]{LawsonJr:1989ub},
	\begin{equation}
		\label{eq:ds1}
		D^2 = -\Delta + \mathcal{O}(1),
	\end{equation}
	where $\Delta$ is the Bochner Laplacian. For the second term in (\ref{eq:ds}), by (\ref{eq:pdd}), the definition of $D_{\partial M}$, one has
	\begin{multline}
		\label{eq:ds2}
		\int_{\partial M} \langle fs, c(e_n)D_+(fs) \rangle \diff v_{\partial M} = \int_{\partial M} \langle fs, \nabla_{-e_n}(fs) \rangle \diff v_{\partial M} \\
        - \int_{\partial M} \langle fs, D_{\partial M,+} (fs) \rangle \diff v_{\partial M}.
	\end{multline}
	Combining (\ref{eq:ds})-(\ref{eq:ds2}) with the definition of the Bochner Laplacian, 
	\begin{multline}
		\label{eq:green}
		\|D_+(fs)\|^2_0 = \int_M \langle \nabla (fs), \nabla (fs)\rangle \diff v_M + \int_M \langle fs, \mathcal{O}(1)(fs) \rangle \diff v_M \\
        - \int_{\partial M} \langle fs, D_{\partial M,+} (fs) \rangle \diff v_{\partial M}.
	\end{multline}
    Since $fs$ satisfies the boundary condition (\ref{eq:pbdc1}), the spectral decomposition of $P_fD_{\partial M,+}$ gives
	\begin{equation}
		\label{eq:gn2}
		\int_{\partial M} \langle fs, D_{\partial M,+} (fs) \rangle \diff v_{\partial M} \le 0.
	\end{equation}
	As a result of (\ref{eq:green}) and (\ref{eq:gn2}), 
	\begin{equation}
		\label{eq:ds4}
		\|D_+(fs)\|^2_0 \ge \|fs\|_1^2 - C_2\|fs\|^2_0.
	\end{equation}
	By (\ref{eq:pf}), (\ref{eq:ds4}), we get (\ref{eq:gard}).        
    \end{proof}

    \begin{remark}
        \label{rk:gc}
        We notice that, in this step, the product structure assumption is actually not necessary. More precisely, the only difference between the general case and product case is that, in the general case, due to the non-vanishing second fundamental form, we will have an extra boundary term in (\ref{eq:ds2}). As a result, a new term that concerns about $\Vert fs \Vert^2_{0,\partial M}$, the $\symbf{H}^0$-norm of $fs$ on the boundary, will appear in (\ref{eq:ds4}). This is not a serious problem, since $\Vert fs \Vert^2_{0,\partial M}$ can be absorbed by $\|fs\|_1^2$, the $\symbf{H}^1$-norm of $fs$ in the interior, with an arbitrary small coefficient by the trace theorem and interpolation property of Sobolev spaces. 
    \end{remark}
	
	\vspace{2ex}
	\noindent \textbf{Step 2. (Existence of the adjoint operator)} In this step, we will show the existence of the adjoint operator of $(P_fD_+,P_{\ge 0,f})$. To state the result more clearly, we use the following ``adjoint version'' of Boundary Value Problem \ref{prob:1}. 
	\begin{problem}[Adjoint version]
		\label{probs:1}
		If $M$ is of product structure near the boundary, define the following boundary value problem,
		\begin{equation*}
			\label{eq:pbdc2}
			(P_fD_-,P_{< 0,f}):\dom{P_fD_-} \rightarrow \symbf{H}^0_f(M,S_+(\symup{T}M)\otimes E)^G,
		\end{equation*}
		where
        \begin{equation}
            \label{eq:bvp2}
            \dom P_fD_- =\{\phi\in \symbf{H}^1_f(M,S_-(\symup{T}M)\otimes E)^G|P_{< 0,f} c(-e_n)(\phi|_{\partial M}) = 0\}.
        \end{equation}
        
\noindent \textit{Question}. Is $(P_fD_-,P_{< 0,f})$ a Fredholm operator?
	\end{problem}
    
    \begin{remark}[The G\"arding type inequality for $P_fD_-$]
        \label{rk:gard}
        Clearly, the boundary condition (\ref{eq:bvp2}) is a modified APS boundary condition for operator $P_fD_-$. By a word-by-word translation of \textbf{Step 1}, we can show (\ref{eq:gard}) also holds for $(P_fD_-,P_{< 0,f})$.
    \end{remark}
    
    With these preparations, we will prove the following result.
	\begin{theorem}
		\label{thm:bd2}
		With the product structure assumption, the Boundary Value Problem \ref{probs:1} is the adjoint problem of the Boundary Value Problem \ref{prob:1} in the sense that $P_fD_-$ with domain $\dom{P_fD_-}$ is the adjoint operator of $P_fD_+$ with domain $\dom{P_fD_+}$ as unbounded operators on $\symbf{H}^0_f(M,S(\symup{T}M)\otimes E)^G$.
	\end{theorem}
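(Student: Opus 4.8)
The plan is to obtain Theorem~\ref{thm:bd2} from the chain of inclusions of unbounded operators on $\symbf{H}^0_f(M,S(\symup{T}M)\otimes E)^G$
\[
(P_fD_-,\dom P_fD_-)\ \subseteq\ (P_fD_+,\dom P_fD_+)^{*}\ \subseteq\ (P_fD_-,\dom P_fD_-),
\]
so that all three operators coincide. The first inclusion is the elementary one: it follows from Green's formula together with the fact that the two spectral boundary conditions in (\ref{eq:pbdc1}) and (\ref{eq:bvp2}) are mutually complementary. The second inclusion is the substantive one; it is an elliptic regularity statement up to $\partial M$, and it is here that the product structure near the boundary is used in an essential way, following the circle of ideas of \citep{Bar:2012in}.

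For the first inclusion, fix $\phi\in\dom P_fD_+$ and $\psi\in\dom P_fD_-$. Extending the Green's formula computation in (\ref{eq:daj}) from smooth sections to $\symbf{H}^1_f$-sections by density — using the boundedness in Proposition~\ref{prop:bd} and the trace theorem of Remark~\ref{rk:fred} — and using that $P_f$ is self-adjoint with $P_f\phi=\phi$ and $P_f\psi=\psi$, one obtains
\[
(P_fD_+\phi,\psi)=(\phi,P_fD_-\psi)+\int_{\partial M}\langle\phi|_{\partial M},c(e_n)\psi|_{\partial M}\rangle\,\diff v_{\partial M}.
\]
The boundary integral vanishes: by (\ref{eq:pbdc1}) the trace $\phi|_{\partial M}$ lies in $\ran P_{<0,f}$, while by (\ref{eq:bvp2}) the section $c(-e_n)\psi|_{\partial M}$, hence also $c(e_n)\psi|_{\partial M}$, lies in $\ran P_{\ge 0,f}$; these are orthogonal complements in $\symbf{H}^0_f(\partial M,(S_+(\symup{T}M)\otimes E)|_{\partial M})^G$, because both spectral subspaces refer to the single operator $P_fD_{\partial M,+}P_f$ — which is precisely the purpose of the $c(\mp e_n)$ twist in (\ref{eq:bvp2}), dictated by $c(e_n)P_fD_{\partial M,+}P_f=-P_fD_{\partial M,-}P_fc(e_n)$. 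Thus $\psi\in\dom(P_fD_+)^{*}$ and $(P_fD_+)^{*}\psi=P_fD_-\psi$.

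For the second inclusion, let $\psi\in\symbf{H}^0_f(M,S_-(\symup{T}M)\otimes E)^G$ and $\eta\in\symbf{H}^0_f(M,S_+(\symup{T}M)\otimes E)^G$ satisfy $(P_fD_+\phi,\psi)=(\phi,\eta)$ for every $\phi\in\dom P_fD_+$. Testing against sections $fs$ with $s\in\Gamma(M,S_+(\symup{T}M)\otimes E)^G$ supported in the interior — which automatically lie in $\dom P_fD_+$ — shows that $P_fD_-\psi=\eta$ holds weakly in the interior, so by the interior elliptic regularity behind Lemma~\ref{lemma:reg} one has $\psi\in\symbf{H}^1_f$ locally away from $\partial M$, with $\psi|_{\partial M}$ a priori only a distributional section in a negative Sobolev space. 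The crux is to upgrade this near $\partial M$. On the collar $\partial M\times[0,1]$, where $M$ has product structure, expanding in an orthonormal eigenbasis of $P_fD_{\partial M,+}P_f$ on $\symbf{H}^0_f(\partial M,(S_+(\symup{T}M)\otimes E)|_{\partial M})^G$ — furnished by Proposition~\ref{prop:sd} applied to the boundary, with $c(e_n)$ simultaneously diagonalizing the tangential part — decouples $P_fD_-\psi=\eta$ into a family of scalar first-order ODEs in the normal coordinate with $L^2$ right-hand sides. Splitting each mode into a particular solution (the inhomogeneity integrated against the decaying fundamental solution, anchored at the appropriate endpoint) and a homogeneous exponential, one sees that the only obstruction to $\symbf{H}^1_f$-regularity of $\psi$ on the collar comes from the homogeneous modes growing towards $\partial M$, whose trace lies in $\ran P_{<0,f}$ after the $c(e_n)$-twist. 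But a suitable extension of the Green's formula above, valid when $\psi$ lies only in the maximal domain, evaluated against all $\phi\in\dom P_fD_+$ — whose traces exhaust a dense subspace of $\ran P_{<0,f}$ — forces exactly these obstructing modes to vanish; the surviving modes are then summable in the $\symbf{H}^1_f$-norm, so $\psi\in\symbf{H}^1_f(M,S_-(\symup{T}M)\otimes E)^G$, its trace is an honest element of $\symbf{H}^0_f(\partial M,(S_-(\symup{T}M)\otimes E)|_{\partial M})^G$, and that trace satisfies $c(e_n)\psi|_{\partial M}\in\ran P_{\ge 0,f}$, i.e.\ $P_{<0,f}c(-e_n)(\psi|_{\partial M})=0$. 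Hence $\psi\in\dom P_fD_-$ and $P_fD_-\psi=\eta$, which gives $(P_fD_+)^{*}\subseteq(P_fD_-,\dom P_fD_-)$ and completes the proof.

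The main obstacle is the boundary-regularity step just described: showing that a section in the domain of the adjoint — a priori only an $L^2$ section satisfying $P_fD_-\psi=\eta$ weakly, with a distributional boundary trace — is in fact $\symbf{H}^1_f$ and has an $L^2$ trace on $\partial M$. This is where the product structure near $\partial M$ is indispensable, reducing the collar analysis to the diagonalized, essentially one-dimensional Atiyah--Patodi--Singer situation as in \citep{Bar:2012in}. One must also check that the cut-off function $f$ and the $G$-invariance cause no trouble; they do not, since the tools invoked — Proposition~\ref{prop:bd}, the trace theorem of Remark~\ref{rk:fred}, and the spectral decomposition of Proposition~\ref{prop:sd} — are all available in the $\symbf{H}^k_f(\cdot)^G$ framework and the estimates are local in character. (The non-product case of Boundary Value Problem~\ref{prob:1} is then treated separately in Subsection~\ref{sub:general} by a perturbation argument.)
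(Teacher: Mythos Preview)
Your proposal is correct and follows essentially the same architecture as the paper's proof: the easy inclusion via Green's formula, followed by the hard inclusion established through interior regularity plus a mode-by-mode analysis on the product collar using the spectral decomposition of $P_fD_{\partial M,+}P_f$.

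Two minor differences in execution are worth noting. First, for the vanishing of the obstructing boundary coefficients (your ``homogeneous modes growing towards $\partial M$''), you invoke an extended Green's formula valid for sections in the maximal domain, which requires some weak trace theory as in \citep{Bar:2012in}. The paper instead works directly from the adjoint relation $(P_fD_+\phi,\sigma)=(\phi,\tau)$ with single-mode test sections $\phi=k_\lambda(u)\eta_\lambda(y)$, obtaining the one-dimensional identity (\ref{eq:key}) and reading off the boundary condition (\ref{eq:cond2}) by ordinary integration by parts in the normal variable $u$ alone --- this is more elementary and sidesteps any a~priori trace statement for $\sigma$. Second, for the $\symbf{H}^1_f$-summability of the Fourier expansion, you appeal (implicitly) to explicit kernel estimates on the particular solutions built from decaying fundamental solutions; the paper instead applies the G\aa rding inequality for $(P_fD_-,P_{<0,f})$ from Remark~\ref{rk:gard} mode-by-mode, yielding (\ref{eq:keye}), which converts $L^2$-summability of $\sigma$ and $\tau$ directly into $\symbf{H}^1_f$-summability of $\sigma$. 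Both routes are valid; the paper's is slightly more self-contained in that it recycles the already-established G\aa rding inequality rather than redoing the ODE analysis.
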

    
    \begin{proof}
	To prove this theorem, the obstacle is to obtain suitable estimate near the boundary. With the product structure assumption, we follow closely as in \citep{Atiyah:1975uz} and \citep{Bar:2012in}, by using the $L^2$ expansion of sections near the boundary explicitly.
	
	To show Theorem \ref{thm:bd2}, we need to prove that,
    \begin{equation}
        \label{eq:dc}
        \dom P_fD_- = \dom\:\! (P_fD_+)^*,
    \end{equation}
    where $(P_fD_+)^*$ is the adjoint operator of $P_fD_+$ as an unbounded operator. 
    
    One side of (\ref{eq:dc}) is trivial, $\dom P_fD_- \subseteq \dom\: (P_fD_+)^*$, just by an application of Green's formula. The inclusion for the other direction is a regularity result essentially. The reason is that, by definition, the sections in $\dom\: (P_fD_+)^*$ only lie in $\symbf{H}^0$, while we need to show that they actually lie in $\dom P_fD_- \subseteq \symbf{H}^1$. By a simple argument using a partition of unity, we can split the regularity problem into two parts: in the interior and near the boundary. As to the interior regularity, there's no difference whether $M$ has a boundary or not. Hence, one can use the techniques for manifold without boundary as in Section \ref{sec:dirac}. More concretely, recall that we use elliptic regularity in Lemma \ref{lemma:reg} and Proposition \ref{prop:regv} in Appendix \ref{sec:simple} to show that the sections in cokernel of $P_fD$ are smooth. In the same fashion, if $\sigma \in \dom\: (P_fD_+)^*$ and $\supp(\sigma)\cap \partial M = \emptyset$, elliptic regularity implies $\sigma\in \symbf{H}^1$. Therefore, all the job left is to do some estimates near the boundary to establish the boundary regularity. 
	
    The trivial coordinates\footnote{In product case, trivial coordinate is the same as the geodesic coordinate.} $(y,u)$ on $\partial M \times [0,1]$ will be used, where $y$ represents the coordinates on $\partial M$ and $u$ is the coordinate in the normal direction. By the product structure assumption, one can suppose that every geometric object near the boundary is of product nature. Especially, we can and we will require the cut-off function $f$ to be independent of $u$. Then by (\ref{eq:proj}) and the $G$-equivariance of the Clifford action on $S(\symup{T}M)\otimes E$, one can verify the following commutation relations near the boundary,
    \begin{subequations}        
	\begin{gather}
		[P_f,c(e_n)]=0,\label{eq:comm1}\\
        [P_f,\partial_u]=0,\label{eq:comm2}
	\end{gather}
    \end{subequations}
    where $e_n$ is the inward unit normal vector field, while $\partial_u$ is the derivative with respect to the coordinate $u$. Note that (\ref{eq:comm1}) in fact has been used in (\ref{eq:daj}), though not written out explicitly. 
    
    Hence, near the boundary, $P_fD$ is independent of $u$. By the results of Section \ref{sec:dirac}, if $\phi\in \symbf{H}^0_f(M,S(\symup{T}M)\otimes E)^G$ and $\supp (\phi)\subseteq \partial M \times [0,1]$, one has the following Fourier expansion in the $L^2$ sense,
	\begin{equation}
		\label{eq:defp}
        \phi = \sum_{\lambda \in \symup{sp}(P_fD_{\partial M})} k_\lambda(u)\eta_\lambda(y),
	\end{equation}
    where $k_\lambda(u) \in L^2([0,1])$, $\eta_\lambda(y)$ is an eigensection of $P_fD_{\partial M}$ associated with eigenvalue $\lambda$.
	
    For every term in (\ref{eq:defp}), the action of $P_fD$ on it can be calculated explicitly. By (\ref{eq:comm1}), (\ref{eq:comm2}), for  $k_{\lambda}(u)\in \mathscr{C}^{\infty}_c([0,1))$, one has
	\begin{equation}
		\label{eq:cal}
		\begin{aligned}
            P_fD (k_{\lambda}(u)\eta_{\lambda}(y)) & =P_fc(e_n)(\partial_u +D_{\partial M}) (k_{\lambda}(u)\eta_{\lambda}(y)) \\
            & = c(e_n)P_f(\partial_u +D_{\partial M}) (k_{\lambda}(u)\eta_{\lambda}(y)) \\
            & = c(e_n)\big((\partial_u+\lambda)k_{\lambda}(u)\big)\eta_{\lambda}(y).
		\end{aligned}
	\end{equation}	
    
    Now, let $\sigma$ be a section in $\dom (P_fD_+)^*$ with compact support in $\partial M \times [0,1)$, by the definition of the adjoint operator, there exists a section $\tau \in \symbf{H}^0_f(M,S_+(\symup{T}M)\otimes E)^G$ such that
	\begin{equation}
		\label{eq:def}
        ( P_fD_+ \phi, \sigma ) = ( \phi, \tau )
	\end{equation}
    holds for any section $\phi \in \dom P_fD_+$, where $(\cdot,\cdot)$ is the inner product of $L^2$ sections. We will show $\sigma \in \dom P_fD_-$. Basically, we prove two things:
    \begin{enumerate}
        \item Every term appearing in the Fourier expansion of $\sigma$ lies in $\symbf{H}^1_f(M,S_-(\symup{T}M)\otimes E)^G$, of which the Fourier coefficient satisfies certain boundary condition;
        \item The Fourier expansion of $\sigma$ converges in $\symbf{H}^1$-norm, although it only converges in $\symbf{H}^0$-norm a priori.
    \end{enumerate}
        
    Let us write down the $L^2$ Fourier expansion of $-c(e_n)\sigma$ and $\tau$,
    \begin{subequations}        
	\begin{align}
        -c(e_n)\sigma &= \sum_{\lambda \in \symup{sp}(P_fD_{\partial M})} \sigma_{\lambda}(u)\eta_{\lambda}(y), \label{eq:sum}\\
		\tau &= \sum_{\lambda \in \symup{sp}(P_fD_{\partial M})} \tau_{\lambda}(u)\eta_{\lambda}(y). \label{eq:sum2}
	\end{align}
    \end{subequations}
	
    Let $\phi\in f\Gamma(M,S_+(\symup{T}M)\otimes E)^G$ be such a section that only one term $k_{\lambda}(u)\eta_{\lambda}(y)$ appears in its Fourier expansion. If $\phi\in \dom P_fD_+$, it must satisfy an extra constrain,
	\begin{equation}
		\label{eq:cond}
		k_{\lambda}(0)=0, \; \text{if }\lambda \ge 0.
	\end{equation}
    By (\ref{eq:cal}), (\ref{eq:def}), (\ref{eq:sum}) and (\ref{eq:sum2}), for and $k_{\lambda}\in \mathscr{C}^{\infty}_c([0,1))$ satisfying (\ref{eq:cond}), one has
	\begin{equation}
		\label{eq:key}
		\int^1_0 \big((\partial_u+\lambda)k_{\lambda}\big)\overline{{\sigma}_{\lambda}}\diff u = \int^1_0 k_{\lambda}\overline{\tau_{\lambda}}\diff u.
	\end{equation}
    Due to (\ref{eq:key}) and the arbitrariness of $k_{\lambda}$, by integration by part, one deduces that ${\sigma}_{\lambda}$ must be an absolutely continuous function and satisfy the additional boundary condition,
	\begin{equation}
		\label{eq:cond2}
		{\sigma}_{\lambda}(0)=0, \; \text{if }\lambda < 0.
	\end{equation}
    Meanwhile, for almost everywhere in $[0,1]$, one has
	\begin{equation}
		\label{eq:rs}
		(-\partial_u + \lambda){\sigma}_{\lambda} = \tau_{\lambda},
	\end{equation}
    which implies that every term in the Fourier expansion of $-c(e_n)\sigma$, i.e. $\sigma_{\lambda}(u)\eta_{\lambda}(y)$, lies in $\symbf{H}^1_f(M,S_-(\symup{T}M)\otimes E)^G$. Moreover, certain boundary condition (\ref{eq:cond2}) should be satisfied. Therefore, we complete our first goal.
	
	Next, we prove that the $L^2$ summation of (\ref{eq:sum}) converges in $\symbf{H}^1$-norm. Our tool is the G\"arding type inequality for $P_fD_-$. Put (\ref{eq:rs}) in another way,
	\begin{equation}
		\label{eq:rs2}
		\tau_{\lambda}(u)\eta_{\lambda}(y)=D_-( c(e_n){\sigma}_{\lambda}(u)\eta_{\lambda}(y)).
	\end{equation}
    By (\ref{eq:cond2}), for every $\lambda$,  $c(e_n){\sigma}_{\lambda}(u)\eta_{\lambda}(y)$ lies in $\dom P_fD_-$. By Remark \ref{rk:gard} and (\ref{eq:rs2}), we have the following estimate,
	\begin{equation}
		\label{eq:keye}
		\|{\sigma}_{\lambda}(u)\eta_{\lambda}(y)\|^2_1 \le C_5 \|c(e_n){\sigma}_{\lambda}(u)\eta_{\lambda}(y)\|^2_1 \le C_6 \|\tau_{\lambda}(u)\eta_{\lambda}(y)\|^2_0 + C_7 \|{\sigma}_{\lambda}(u)\eta_{\lambda}(y)\|^2_0.
	\end{equation}
    The estimate (\ref{eq:keye}) guarantees that the $L^2$ convergence of (\ref{eq:sum}) and (\ref{eq:sum2}) implies that (\ref{eq:sum}) is summable for $\symbf{H}^1$-norm. In other word, $\sigma\in \symbf{H}^1_f({M},S_-(\symup{T}M)\otimes E)^G$ has been proved. Besides, with the aid of the Fourier expansion of $\sigma$, one can check that $\sigma$ satisfies the boundary condition (\ref{eq:bvp2}) directly. 
    
    All in all, we have verified the domain relation (\ref{eq:dc}). Besides, by (\ref{eq:rs2}), we get $(P_fD_+)^*=P_fD_-$. The proof of Theorem \ref{thm:bd2} finishes.        
    \end{proof}
    
	With the preparation of the above two steps, we can prove Theorem \ref{thm:main1} in the product case readily.
	\begin{proof}[Proof of Theorem \ref{thm:main1} in the product case]
		By Lemma \ref{lemma:garding1}, the G\"arding type inequality, one sees that $(P_fD_+,P_{\ge 0,f})$ possesses a finite-dimensional kernel. Theorem \ref{thm:bd2} shows that the cokernel of $(P_fD_+,P_{\ge 0,f})$ is the kernel of $(P_fD_-,P_{<0,f})$ exactly. Hence, to show that $(P_fD_+,P_{\ge 0,f})$ has a finite-dimensional cokernel, it's enough to prove the corresponding property for the kernel of $(P_fD_-,P_{<0,f})$, which is the result of Remark \ref{rk:gard}, the G\"arding type inequality for $(P_fD_-,P_{<0,f})$. As to the closed ranged property of $(P_fD_+,P_{\ge 0,f})$, one can conclude it by using the G\"arding type inequality and
Peetre's lemma as in \citep[pp.~16, Lemma 2.1]{Kazdan:1983wy}.	
    \end{proof}


\subsection{The general case} 
\label{sub:general}
As we have indicated in Introduction, the product structure assumption for the Fredholmness result can be removed by a perturbation argument. Let us explain the idea more carefully. 

Intuitively, a general metric on $M$ can be approximated by the product metric, which gives rise to the possibility of approximating $P_fD$ by Fredholm operators. However, technically, we would like to apply the approximation method for $(P_fD)^2$ rather than $P_fD$. The principal reason is that unlike Boundary Value Problem \ref{prob:1}, the same type problem for the second order operator is a self-adjoint one. As a result, one can use the Rellich perturbation theorem to deduce the self-adjointness of $(P_fD)^2$, if the domain is chosen suitably, from the self-adjointness of the approximating operators. 

Thereupon, to carry out the perturbation argument, we need to generalize the result in Subsection \ref{sub:product_case} to a second order problem first, which involves several new calculations.
    \begin{theorem}
    	\label{thm:sq}
    	If $M$ is of product structure near the boundary, then the operator
    	\begin{equation*}
    		((P_fD)^2,\mathscr{B}_+):\mathscr{B}_+ \rightarrow \symbf{H}^0_f(M,S_+(\symup{T}M)\otimes E)^G
    	\end{equation*}
    	is a self-adjoint Fredholm operator, where
    	\begin{equation}
            \label{eq:sqbd}
            \mathscr{B}_+ =\{ \phi\in \symbf{H}^2_f(M,S_+(\symup{T}M)\otimes E)^G|P_{\ge 0,f}(\phi|_{\partial M})=P_{< 0,f}c(-e_n)\big((P_fD \phi)|_{\partial M}\big) =0 \}.
    	\end{equation}
        As usual, the spectral projection operators are defined by $P_fD_{\partial M,+}P_f$.
    \end{theorem}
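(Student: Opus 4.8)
The plan is to recognize $((P_fD)^2,\mathscr{B}_+)$ as $T^{*}T$, where $T=(P_fD_+,P_{\ge 0,f})$ is the operator treated in Theorem~\ref{thm:main1} and, by Theorem~\ref{thm:bd2}, $T^{*}=(P_fD_-,P_{<0,f})$. Since the Clifford grading is $G$-equivariant, $P_f$ preserves the $\mathbb{Z}_2$-decomposition while $D$ reverses it, so on the $S_+$-component $(P_fD)^2=(P_fD_-)(P_fD_+)$ as a formal differential expression; it therefore remains only to match domains, i.e.\ to show $\mathscr{B}_+=\dom T^{*}T=\{\phi\in\dom T:\,T\phi\in\dom T^{*}\}$. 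The inclusion $\mathscr{B}_+\subseteq\dom T^{*}T$ is immediate from the definitions: the first condition in \eqref{eq:sqbd} says $\phi\in\dom P_fD_+$, and, since $\phi\in\symbf{H}^2_f$ gives $P_fD_+\phi\in\symbf{H}^1_f$ by Proposition~\ref{prop:bd}, the second condition in \eqref{eq:sqbd} is exactly the requirement $P_fD_+\phi\in\dom P_fD_-$ from \eqref{eq:bvp2}, together with $T^{*}T\phi=(P_fD)^2\phi$. Granting the reverse inclusion, the conclusion is formal: $T$ is a densely defined closed Fredholm operator (Theorem~\ref{thm:main1}, product case), hence $T^{*}T$ is self-adjoint by von Neumann's theorem and Fredholm, with $\ker T^{*}T=\ker T$ and $\coker T^{*}T\cong\ker T$, both finite-dimensional.

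The substantive point is the reverse inclusion $\dom T^{*}T\subseteq\mathscr{B}_+$, a boundary elliptic regularity statement: if $\phi\in\symbf{H}^1_f(M,S_+(\symup{T}M)\otimes E)^G$ satisfies the two boundary conditions of \eqref{eq:sqbd} and $(P_fD)^2\phi\in\symbf{H}^0_f(M,S_+(\symup{T}M)\otimes E)^G$, then $\phi\in\symbf{H}^2_f(M,S_+(\symup{T}M)\otimes E)^G$. As in the proof of Theorem~\ref{thm:bd2} I would use a partition of unity to reduce to (a) a part supported away from $\partial M$, for which the interior elliptic regularity used in Lemma~\ref{lemma:reg} and Proposition~\ref{prop:regv}, now applied to the second-order elliptic operator $(P_fD)^2$, gives the $\symbf{H}^2$ bound, and (b) a part supported in $\partial M\times[0,1)$. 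For (b), in the product coordinates $(y,u)$ the commutation relations \eqref{eq:comm1}--\eqref{eq:comm2} give $P_fD=c(e_n)(\partial_u+P_fD_{\partial M})$ near $\partial M$, and since $c(e_n)^2=-1$ while $c(e_n)$ anticommutes with $P_fD_{\partial M}$,
\[
    (P_fD)^2=-\partial_u^2+(P_fD_{\partial M})^2\qquad\text{near }\partial M.
\]
Expanding in eigensections $\eta_\lambda$ of $P_fD_{\partial M,+}P_f$ as in \eqref{eq:defp}, $\phi=\sum_\lambda k_\lambda(u)\eta_\lambda(y)$, the equation $(P_fD)^2\phi=\tau$ decouples into $-k_\lambda''+\lambda^2 k_\lambda=\tau_\lambda$, and, using that $c(e_n)$ carries the $\lambda$-eigensection of $P_fD_{\partial M,+}P_f$ to the $(-\lambda)$-eigensection of $P_fD_{\partial M,-}P_f$, the two conditions of \eqref{eq:sqbd} translate into the mode-wise conditions
\[
    k_\lambda(0)=0\quad(\lambda\ge 0),\qquad k_\lambda'(0)=-\lambda k_\lambda(0)\quad(\lambda<0).
\]

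The heart of the matter is the a priori estimate for this ODE family. Pairing $-k_\lambda''+\lambda^2 k_\lambda=\tau_\lambda$ with $\overline{k_\lambda}$ over $[0,1]$ and integrating by parts produces the boundary term $k_\lambda'(0)\overline{k_\lambda(0)}$, which vanishes when $\lambda\ge 0$ and equals $-\lambda|k_\lambda(0)|^2\ge 0$ when $\lambda<0$ (the contribution at $u=1$ being killed by the cut-off); thus both boundary conditions are coercive, and one obtains $\|k_\lambda'\|^2+\lambda^2\|k_\lambda\|^2\le\|\tau_\lambda\|\,\|k_\lambda\|$, whence $\lambda^4\|k_\lambda\|^2\le\|\tau_\lambda\|^2$, $\lambda^2\|k_\lambda'\|^2\le\|\tau_\lambda\|^2$, and, from the equation, $\|k_\lambda''\|\le 2\|\tau_\lambda\|$. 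Summing over the discrete spectrum of $P_fD_{\partial M,+}P_f$ (only finitely many $\lambda$ with $|\lambda|<1$, whose modes are already controlled by $\phi\in\symbf{H}^1_f$) and invoking the standard equivalence, valid here because of the product structure and the ellipticity of $P_fD_{\partial M}$, between the $\symbf{H}^2_f$-norm near $\partial M$ and $\sum_\lambda(\|k_\lambda''\|^2+\lambda^2\|k_\lambda'\|^2+\lambda^4\|k_\lambda\|^2)$ plus lower-order terms, we conclude $\phi\in\symbf{H}^2_f$. The main obstacle is precisely this boundary analysis: verifying that the asymmetric pair of conditions in \eqref{eq:sqbd} is coercive, and keeping track of the commutator terms introduced by localizing with a cut-off (which are first order, hence absorbed by Propositions~\ref{prop:tb} and \ref{prop:tb2}); once the regularity is established, the self-adjoint-Fredholm conclusion is automatic.
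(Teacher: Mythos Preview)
Your approach is correct but takes a genuinely different route from the paper. The paper proceeds in two independent steps: first it proves a second-order G\"arding inequality $\lno (P_fD)^2(fs)\rno_0^2\ge c_1'\lno fs\rno_2^2-c_2'\lno fs\rno_0^2$ for $fs\in\mathscr{B}_+$ (Lemma~\ref{lemma:garding2}, with a full computation in Appendix~\ref{sec:garding2}); second, it shows self-adjointness of $((P_fD)^2,\mathscr{B}_+)$ directly by computing its adjoint via the boundary Fourier expansion, repeating the scheme of Theorem~\ref{thm:bd2} at second order. Fredholmness then follows from the G\"arding inequality (closed range, finite kernel) plus self-adjointness (cokernel equals kernel).

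Your factorization $((P_fD)^2,\mathscr{B}_+)=T^*T$ is more economical for this theorem in isolation: von Neumann gives self-adjointness for free, and Fredholmness of $T^*T$ is inherited from that of $T$ (you might make explicit that closed range of $T$ forces $T^*T|_{(\ker T)^\perp}$ to be a bijection onto $(\ker T)^\perp$, hence $\operatorname{ran}T^*T$ is closed). Your substantive step, the $\symbf{H}^2$-regularity $\dom T^*T\subseteq\mathscr{B}_+$, is established by the same boundary Fourier machinery the paper uses, and your coercivity computation for the mode-wise boundary conditions is correct. The one point to keep in mind is that the paper's explicit G\"arding inequality is not redundant in the larger scheme: its constant $c_1'$ is later needed \emph{uniformly} in the approximating product metrics $g_\epsilon^{\symup{T}^*M}$ in the proof of Theorem~\ref{thm:so} (see \eqref{eq:sa2} and the remark that $C_3$ can be chosen independent of $\epsilon$), so your shortcut here would still require proving Lemma~\ref{lemma:garding2} afterwards.
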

    
    Clearly, Proposition \ref{prop:bd} testifies the well-posedness of Theorem \ref{thm:sq}. Our method to attack the second order problem is identical with the first order one, i.e. the G\"arding type inequality and the Fourier expansion on the cylindrical end. Thus we will organize the proof of Theorem \ref{thm:sq} in parallel with that of Theorem \ref{thm:main1} as much as possible.   

\vspace{2ex}    
\noindent \textbf{Step 1. (Another G\"arding type inequality)} As before, we establish a G\"arding type inequality for $((P_fD)^2,\mathscr{B}_+)$.

\begin{lemma}
    \label{lemma:garding2}
    For every $s\in \Gamma(M,S_{+}(\symup{T}M)\otimes E)^G$ with $fs$ satisfying the boundary condition (\ref{eq:sqbd}), one has
    \begin{equation}
        \label{eq:garding2}
            \lno (P_fD)^2(fs) \rno^2_0 \ge c'_1 \lno fs \rno^2_2 - c'_2 \lno fs \rno^2_0,
    \end{equation}
    where $c'_1$, $c'_2$ are positive constants independent of $s$.
\end{lemma}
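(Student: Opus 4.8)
The plan is to mimic \textbf{Step 1} of the first-order case (Lemma \ref{lemma:garding1}), applying Green's formula twice and using the two boundary constraints encoded in \eqref{eq:sqbd} to control the resulting boundary terms. First I would reduce from $(P_fD)^2$ to $D^2$. By Proposition \ref{prop:tb2}, $P_fD^2 - (P_fD)^2$ is bounded on $\symbf{H}^0_f$, so
$\lno (P_fD)^2(fs) \rno_0 \ge \lno P_fD^2(fs) \rno_0 - C\lno fs \rno_0$, and by Proposition \ref{prop:tb} applied to $P_fD(1-P_f)$ together with $D^2 = -\Delta + \mathcal{O}(1)$ (the Lichnerowicz formula \eqref{eq:ds1}), it suffices to bootstrap an $\symbf{H}^2$-estimate from $\lno D^2(fs)\rno_0$ plus lower-order terms. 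Setting $\psi = fs$ and using $\langle D^2\psi, D^2\psi\rangle \ge \langle \Delta\psi,\Delta\psi\rangle - (\text{lower order})$, the standard elliptic estimate $\lno \psi\rno_2 \le C(\lno \Delta\psi\rno_0 + \lno\psi\rno_0 + \text{boundary terms})$ reduces everything to showing that the boundary contributions that arise from integration by parts are either nonpositive or absorbable.

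The main work is the boundary-term bookkeeping. I would integrate $\lno D^2\psi\rno_0^2 = (D^2\psi, D^2\psi)$ by parts once to get $(D^3\psi, D\psi)$ plus a boundary integral over $\partial M$ involving $c(e_n)$, then a second time. Writing $D = c(e_n)(\partial_u + D_{\partial M})$ near the boundary (product structure) and using the commutation relations \eqref{eq:comm1}, \eqref{eq:comm2}, the boundary integrand can be expressed through $\psi|_{\partial M}$, $(\partial_u\psi)|_{\partial M}$, $D_{\partial M}$, and $(P_fD\psi)|_{\partial M}$. The first constraint in \eqref{eq:sqbd}, $P_{\ge 0,f}(\psi|_{\partial M}) = 0$, gives $\int_{\partial M}\langle \psi, D_{\partial M,+}\psi\rangle \le 0$ exactly as in \eqref{eq:gn2}; the second constraint, $P_{<0,f}c(-e_n)\big((P_fD\psi)|_{\partial M}\big) = 0$, is precisely the APS-type condition making $(P_fD\psi)|_{\partial M}$ lie in $\dom P_fD_-$, which by Remark \ref{rk:gard} controls the sign of the corresponding boundary pairing $\int_{\partial M}\langle P_fD\psi, D_{\partial M,-}(P_fD\psi)\rangle$. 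Combining the two sign conditions with the interior Bochner identity applied to both $\psi$ and $D\psi$ yields $\lno D^2\psi\rno_0^2 \ge \lno\psi\rno_2^2 - C\lno\psi\rno_1^2$; the extra $\lno\psi\rno_1^2$ term is then absorbed into $\lno\psi\rno_2^2$ with a small coefficient via interpolation, leaving only a multiple of $\lno\psi\rno_0^2$, which is \eqref{eq:garding2}.

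The hard part will be organizing the second integration by parts cleanly: one must check that all boundary terms not covered by the two explicit constraints in \eqref{eq:sqbd} are of the form $\lno\psi\rno^2_{0,\partial M}$, $\lno(\partial_u\psi)\rno^2_{0,\partial M}$, or $\lno(P_fD\psi)\rno^2_{0,\partial M}$, i.e. boundary $\symbf{H}^0$-norms of objects controlled by the interior $\symbf{H}^2$-norm, so that the trace theorem plus interpolation (as in Remark \ref{rk:gc}) lets them be absorbed with arbitrarily small coefficient. A minor additional point, as in Remark \ref{rk:gc}, is that although the statement assumes product structure, the second-fundamental-form corrections in $D_{\partial M}$ \eqref{eq:pdd} would only contribute further absorbable boundary terms, so the argument is in fact robust; but since Theorem \ref{thm:sq} is stated in the product case we need not invoke that here.
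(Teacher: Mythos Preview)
Your overall architecture—reduce $(P_fD)^2$ to $D^2$ via Propositions \ref{prop:tb} and \ref{prop:tb2}, extract the $\symbf{H}^2$-norm by integration by parts, use the two boundary constraints for sign control, and absorb the leftover boundary terms by trace plus interpolation—is exactly the paper's plan. The difference, and the place where your sketch has a genuine gap, is the central computation.

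Your proposed route through $(D^2\psi,D^2\psi)\to (D^3\psi,D\psi)\to\cdots$ does not lead anywhere: a second Green's formula either returns you to $(D^2\psi,D^2\psi)$ or lands on $(D^4\psi,\psi)$, neither of which isolates $\lno\nabla^2\psi\rno_0^2$. Likewise, the ``standard elliptic estimate $\lno\psi\rno_2\le C(\lno\Delta\psi\rno_0+\lno\psi\rno_0+\text{boundary})$'' is not a black box on a manifold with boundary; deriving it \emph{is} the content of the lemma, and the boundary terms it produces involve $\nabla_{e_n}\psi$ and $\Delta\psi$ on $\partial M$, which are not directly governed by the spectral conditions in \eqref{eq:sqbd}. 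The paper avoids this by a device you do not mention: the Bismut--Lebeau partition of unity, splitting $M$ into an interior piece (where the ordinary closed-manifold elliptic estimate gives \eqref{eq:garding2} immediately) and the collar $\partial M\times[0,1)$. On the collar the product structure gives the exact identity $D^2=-\partial_u^2+B^2$ with $B=D_{\partial M}$, so
\[
\lno D^2(fs)\rno_0^2=\lno\partial_u^2(fs)\rno_0^2+\lno B^2(fs)\rno_0^2-2\,\mathrm{Re}\big(\partial_u^2(fs),B^2(fs)\big),
\]
and the three pieces are matched term by term against the explicit cylinder expression for $\lno fs\rno_2^2$.

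Your description of the sign control is also too optimistic. The two conditions in \eqref{eq:sqbd} do not each kill a separate boundary integral; after the Lichnerowicz substitution $B^2=-\Delta_{\partial M}+\mathcal{O}(1)$ and one Green's formula in $u$, the single dangerous boundary term is the cross term $2\,\mathrm{Re}\big(\partial_u(fs),(P_fB)^2(fs)\big)_{\partial M}$. Its nonnegativity comes from \emph{combining} the two constraints: $P_{\ge 0,f}(fs)=0$ gives $\big(fs,(P_fB)^i fs\big)_{\partial M}\le 0$ for odd $i$ and $\big(\partial_u(fs),(P_fB)\partial_u(fs)\big)_{\partial M}\le 0$, while $P_{<0,f}c(-e_n)(P_fD(fs))=0$ gives the opposite-sign inequality for $(\partial_u+P_fB)(fs)$; subtracting the former from the expansion of the latter is what isolates the cross term with the correct sign. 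In short, adopt the interior/collar split and carry out the collar computation with $D^2=-\partial_u^2+B^2$; your Green's-formula-twice idea on all of $M$ will not close.
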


\begin{remark}
    For $f\equiv 1$, i.e. $M$ is compact, Lemma \ref{lemma:garding2} results from the general estimate about elliptic boundary value problem, c.f. \citep[Ch.~VI, Theorem 4]{Seeley:2011je}.
\end{remark}

\begin{proof}
    The proof proceeds in the same spirit as Lemma \ref{lemma:garding1}. The main difference is that unlike the first order case, the product structure assumption simplifies the calculation here to a large extent. Especially, to take the full advantage of the assumption, an idea due to \citet[pp. 115-117]{Bismut:1991ve} is used to split the estimate into two parts: in the interior and near the boundary. The detail is deferred to Appendix \ref{sec:garding2}.
\end{proof}

	\vspace{2ex}
	\noindent \textbf{Step 2. (Existence of the adjoint operator)} By the  G\"arding type inequality (\ref{eq:garding2}), this step is almost the same as in the first order case.  In fact, the only special feature used here is that the operator $(P_fD)^2$ with domain $\mathscr{B}_+$ is a self-adjoint operator. Just repeat the second step in the proof of Theorem \ref{thm:main1} with product structure assumption. Specifically, for a section $\sigma\in \dom ((P_fD)^2)^*$, one can show:

    \begin{enumerate}
        \item the Fourier expansion coefficients of $\sigma$ have suitable differentiability and satisfy the boundary condition;
        \item the $L^2$ Fourier expansion of $\sigma$ is summable in $\symbf{H}^2$-norm.
    \end{enumerate}
    
    \begin{proof}[Proof of Theorem \ref{thm:sq}]
        After these preparations, one can prove Theorem \ref{thm:sq} easily.
    \end{proof}

    In the remaining part of this subsection, we discuss how to extend Theorem \ref{thm:sq} to the general case. The result is the following theorem.
    \begin{theorem}
    	\label{thm:so}
    	For any $(M,g^{\symup{T}M})$, possibly without product structure near the boundary, the operator
    	\begin{equation*}
            ((P_fD)^2,\mathscr{B}_+):\mathscr{B}_+ \rightarrow \symbf{H}^0_f(M,S_+(\symup{T}M)\otimes E)^G
    	\end{equation*}
    	is a self-adjoint Fredholm operator, where
        \begin{equation}
            \label{eq:sobd}
            \mathscr{B}_+ = \{ \phi\in \symbf{H}^2_f(M,S_+(\symup{T}M)\otimes E)^G|P_{\ge 0,f}(\phi|_{\partial M})= P_{< 0,f}c(-e_n)\big((P_fD \phi)|_{\partial M}\big)=0 \}.
        \end{equation}
    \end{theorem}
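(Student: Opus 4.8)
The plan is to deduce Theorem~\ref{thm:so} from the product case, Theorem~\ref{thm:sq}, by the perturbation scheme indicated in the text. First I would choose a $G$-invariant metric $g_0^{\symup{T}M}$ and $G$-invariant data on $E$ that have product structure near $\partial M$, that agree with the given ones outside a thin collar $\partial M\times[0,\delta]$, and that satisfy $\Vert g_0^{\symup{T}M}-g^{\symup{T}M}\Vert_{C^0}\le C\delta$; this is possible because cocompactness gives a geodesic collar of uniform width and a uniformly bounded second fundamental form. Using the Bourguignon--Gauduchon identification of Spin$^c$ spinor bundles for the two metrics --- fibrewise isometric, $G$-equivariant, and commuting with multiplication by $f$ --- I would transport everything onto the fixed Hilbert space attached to $g_0^{\symup{T}M}$. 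This carries $P_f$ to $P_f$ and realises the original Spin$^c$ Dirac operator as $\widetilde D=D_0+V$, where $D_0$ is the product Dirac operator and $V$ is a first-order operator supported in $\overline{\partial M\times[0,\delta]}$ whose principal part has coefficients $O(\delta)$ and whose zeroth-order part is merely bounded.

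Second, I would compare the two second-order operators on the \emph{product} domain $\mathscr B_+$ of Theorem~\ref{thm:sq}. By Proposition~\ref{prop:tb2}, up to a bounded operator $(P_f\widetilde D)^2-(P_fD_0)^2=P_f(D_0V+VD_0+V^2)$, a collar-supported second-order operator with $O(\delta)$ leading coefficients, so for $\phi\in\mathscr B_+$
\begin{equation*}
  \bigl\Vert\bigl((P_f\widetilde D)^2-(P_fD_0)^2\bigr)\phi\bigr\Vert_0\le C\delta\Vert\phi\Vert_2+C'\Vert\phi\Vert_1+C''\Vert\phi\Vert_0 .
\end{equation*}
Absorbing $\Vert\phi\Vert_1$ into $\Vert\phi\Vert_2$ by interpolation and dominating $\Vert\phi\Vert_2$ by the graph norm of $((P_fD_0)^2,\mathscr B_+)$ through the G\"arding inequality of Lemma~\ref{lemma:garding2}, the right-hand side is $\le a\Vert(P_fD_0)^2\phi\Vert_0+b\Vert\phi\Vert_0$ with $a<1$ for $\delta$ small. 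Since $((P_fD_0)^2,\mathscr B_+)$ is self-adjoint by Theorem~\ref{thm:sq} and, by Lemma~\ref{lemma:garding2} and the Rellich lemma~\ref{prop:rellich}, has compact resolvent, Rellich's perturbation theorem gives that $(P_f\widetilde D)^2$ on the same domain is self-adjoint with compact resolvent; in particular its graph norm still controls the $\symbf{H}^2_f$-norm.

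Third, I would pass from this product boundary condition to the condition \eqref{eq:sobd} built from the \emph{genuine} boundary operator $P_fD_{\partial M,+}P_f$. The latter differs from the product model by the (now nonzero) second fundamental form term and by bounded connection terms --- a perturbation that is bounded and, since each boundary operator has discrete spectrum by Proposition~\ref{prop:sd}, relatively compact, but \emph{not} small --- so the two APS projections $P_{\ge 0,f}$, and likewise the two projections of the adjoint half, differ by a compact operator and form Fredholm pairs. That $(P_f\widetilde D)^2$ with boundary condition \eqref{eq:sobd} is still self-adjoint I would see from the fact that \eqref{eq:sobd} is exactly the domain of the self-adjoint operator attached to the closed, bounded-below form $\phi\mapsto\Vert P_f\widetilde D\phi\Vert_0^2$ with form domain $\mathcal{Q}=\{\phi\in\symbf{H}^1_f(M,S_+(\symup{T}M)\otimes E)^G:P_{\ge 0,f}(\phi|_{\partial M})=0\}$ (closed because the G\"arding inequality of Lemma~\ref{lemma:garding1}, which by Remark~\ref{rk:gc} needs no product structure, makes its form norm equivalent to $\Vert\cdot\Vert_1$): elliptic regularity --- obtainable, as in the second step, by the perturbation from Theorem~\ref{thm:sq} --- together with the boundary-term computation that makes Boundary Value Problem~\ref{probs:1} adjoint to Boundary Value Problem~\ref{prob:1}, shows that the domain of this form operator is precisely $\mathscr B_+$ as in \eqref{eq:sobd}. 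Fredholmness then follows since $\mathcal{Q}\hookrightarrow\symbf{H}^1_f\hookrightarrow\symbf{H}^0_f$ is compact by the Rellich lemma~\ref{prop:rellich}, so the resolvent is compact and the spectrum discrete; undoing the Bourguignon--Gauduchon identification gives Theorem~\ref{thm:so}.

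The step I expect to be the main obstacle is precisely this last one: Rellich's perturbation theorem needs a fixed domain, whereas \eqref{eq:sobd} for $g^{\symup{T}M}$ differs from the product boundary condition not by a small perturbation but by a compact one, so one cannot simply bridge the two by Rellich's theorem. The way around it is to separate the operator deformation (handled by Rellich on the product domain) from the change of boundary condition (handled by the variational characterization of \eqref{eq:sobd} and the Fredholm-pair stability of APS-type conditions), keeping the adjoint half of the boundary condition matched so that symmetry is never lost; pinning down the compactness of the change in $P_{\ge 0,f}$, and verifying that self-adjointness --- not merely Fredholmness --- survives the change of boundary condition, is where the bulk of the technical work lies.
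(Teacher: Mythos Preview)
Your first two steps mirror the paper's approach closely: approximate by a product metric and apply Rellich's perturbation theorem using the G\"arding inequality from the product case. Where you diverge is the third step, which you flag as the main obstacle --- but it is an obstacle only because you have overlooked the observation that makes it unnecessary.

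The boundary condition $\mathscr{B}_+$ in \eqref{eq:sobd} and the product boundary condition are \emph{equal as sets}, not merely related by a compact perturbation of projections. For the first half of the condition this is because $D_{\partial M}$ as written in \eqref{eq:pdd}, with its second-fundamental-form correction, is precisely the \emph{intrinsic} Dirac operator on $\partial M$: it depends only on the induced metric on $\partial M$, which is the same for $g^{\symup{T}^*M}$ and every $g^{\symup{T}^*M}_\epsilon$ since they all agree at $u=0$ in geodesic coordinates. Hence $P_{\ge 0,f}$ and $P_{<0,f}$ are literally the same operators throughout. Your statement that the genuine boundary operator ``differs from the product model by the (now nonzero) second fundamental form term'' misreads the role of that term --- it is exactly the correction that makes $D_{\partial M}$ intrinsic, not an extra perturbation. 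The second half of the boundary condition does involve the ambient operator $P_fD$, but here the paper invokes \cite[Lemma~3.1]{Gilkey:1993dm} to get the equality \eqref{eq:beq}, $\mathscr{B}_+=\mathfrak{B}_+(\epsilon)$; this is the key input you are missing.

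Once $\mathscr{B}_+=\mathfrak{B}_+(\epsilon)$ is in hand, your third step disappears and Rellich's perturbation theorem applies directly on the fixed domain, exactly as in your second step. The paper then also extracts a G\"arding inequality for the general metric from the same perturbation estimates (see \eqref{eq:sa4}), giving Fredholmness without any quadratic-form detour. Your form-domain route might in principle be pushed through, but note a circularity: identifying the operator domain of the form with \eqref{eq:sobd} requires precisely the $\symbf{H}^2_f$-regularity that the theorem is asserting, so you would need an independent regularity argument rather than the one you already used on the product domain.
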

    
    \begin{remark}
        \label{rk:so}
        For compact manifold, this is a well-known result which can be established by pseudo-differential calculus, c.f. \citep[Theorem 2.1]{Grubb:1992jd}.
    \end{remark}   
    
    \begin{proof}
    As in Subsection \ref{sub:product_case}, we use the geodesic coordinates $(y,u)$ on $\partial M\times [0,1]$, where $y$ is the coordinate on $\partial M$ and $u$ is the coordinate in the normal direction. To begin with, we fix a metric and a unitary connection on $E$ and that, near the boundary, the metric and connection are constant in the normal direction.
    
    Until now, when mentioning metric on $M$ in this paper, we always refer to metric on the tangent bundle, however the metric on the \uline{cotangent bundle} will be used in this proof. In the geodesic coordinates, the metric of $\symup{T}^*M$ has the following expression,
    \begin{equation*}
    	g^{\symup{T}^*M} = du^2+g^{\symup{T}^* \partial M}_u,
    \end{equation*}
    where $g^{\symup{T}^* \partial M}_{u}$ is a metric on $\symup{T}^*\partial M$ depending on $u$ and $g^{\symup{T}^* \partial M}_{0}$ is actually the induced metric on $\symup{T}^*\partial M$. For convenience, we also introduce a new product metric $g_p^{\symup{T}^* M}$ on $\partial M \times [0,1]$,
    \begin{equation*}
        g_p^{\symup{T}^* M} = du^2+g^{\symup{T}^* \partial M}_{0},
    \end{equation*}
    which is just the pull-back metric induced by the projection from $\partial M\times [0,1]$ to $\partial M$. Since $g_0^{\symup{T}^* \partial M}$ is a $G$-invariant metric on the boundary and the geodesic coordinates is $G$-equivariant, $g_p^{\symup{T}^* M}$ is also a $G$-invariant metric. Then we follow \citet{Grubb:1992jd}, where a family of perturbation metrics $g_{\epsilon}^{\symup{T}^* M}$ is defined. Introduce an auxiliary function $\rho\in \mathscr{C}^\infty([0,1])$, which is a monotonous function taking values in $[0,1]$ and satisfies $ \rho|_{[0,1/3]}=1, \rho|_{[2/3,1]}=0$. For $0< \epsilon \le 1$, on $\partial M\times [0,1]$, set
    \begin{equation}
        g_{\epsilon}^{\symup{T}^* M} =\rho(u/\epsilon) g_p^{\symup{T}^* M} + (1-\rho(u/\epsilon))g^{\symup{T}^* M}. \label{eq:ma}
    \end{equation}
    Obviously, for any $\epsilon> 0$, $g_{\epsilon}^{\symup{T}^* M}$ is of product structure near $\partial M$. Since we are using the geodesic coordinates, and $g^{\symup{T}^* M},g_p^{\symup{T}^* M}$ are $G$-invariant, $g^{\symup{T}^* M}_{\epsilon}$ is $G$-invariant. By (\ref{eq:ma}), $g^{\symup{T}^*M}_{\epsilon}$ will converge to $g^{\symup{T}^*M}$ in $\mathscr{C}^0$-norm when $\epsilon$ approaches to $0$. Since $M$ is non-compact, we should explain more about the meaning of the convergence. Let $U$ be the precompact open subset in (\ref{eq:setr}). By saying that $\{g_{\epsilon}^{\symup{T}^* M}\}$ converge to $g^{\symup{T}^* M}$ in $\mathscr{C}^0$-norm, we mean that $\{g_{\epsilon}^{\symup{T}^* M}|_U\}$ converge to $g^{\symup{T}^* M}|_U$ in $\mathscr{C}^0$-norm on $U$. Due to the $G$-invariance of $\{g_{\epsilon}^{\symup{T}^* M}\}$ and $g^{\symup{T}^* M}$, the convergence is independent of the choice of $U$.
    
    
    Let $D_{\epsilon}$ denote the Dirac operator defined by $g^{\symup{T}^*M}_{\epsilon}$. Obviously, $D_{\epsilon}$ is a $G$-equivariant operator. With $D_{\epsilon}$, one can restate the boundary condition (\ref{eq:sobd}) with an $\epsilon$ parameter. For $0<\epsilon \le 1$, define the following boundary condition,
    \begin{multline}
        \mathfrak{B}_+(\epsilon)=\{\phi\in \symbf{H}^2_f(M,S_+(\symup{T}M)\otimes E)^G| P_{\ge 0,f,\epsilon}(\phi|_{\partial M}) \\
        = P_{< 0,f,\epsilon}c(-e_n)\big((P_fD_{\epsilon} \phi)|_{\partial M}\big)=0\},
    \end{multline}
    where $P_{\ge 0,f,\epsilon}$ and $P_{< 0,f,\epsilon}$ are spectral projection operators defined by $P_fD_{\partial M,\epsilon,+}P_f$. As the notation indicates, $D_{\partial M,\epsilon}$ is the boundary operator of $D_{\epsilon}$.
    
    Note that for any $\epsilon>0$, near the boundary, the operator $D_{\epsilon}$ is equal to $D_p$, which is the Dirac operator defined by the product metric $g_p^{\symup{T}^* M}$. As a result, $\mathfrak{B}_+(\epsilon)$ is independent of $\epsilon$. A key fact about $\mathfrak{B}_+(\epsilon)$ is the following equality due to \citep[Lemma 3.1]{Gilkey:1993dm}, 
    \begin{equation}
        \label{eq:beq}
        \mathscr{B}_+=\mathfrak{B}_+(\epsilon).
    \end{equation}
    
    Having such an interpretation of the boundary condition, we will use the Rellich perturbation theorem, c.f. \citep[Ch. 33, Theorem 5]{Lax:2002ul}, \citep[Theorem X.12]{Reed:1975wp}, to show that $(P_fD)^2$ with domain $\mathscr{B}_+$ is a self-adjoint operator. 
    
    In view of (\ref{eq:beq}), Theorem \ref{thm:sq} implies that $(P_fD_{\epsilon})^2$ with domain $\mathscr{B}_+$ is a self-adjoint Fredholm operator. Since the principle symbol in a local coordinate system of $D^2$ is $g^{ij}\xi^i \xi^j$, where $g^{ij}$ is the coordinate components of $g^{\symup{T}^* M}$, the coefficients of second order term of $D^2-D^2_{\epsilon}$ can be controlled by the $\mathscr{C}^0$-norm of $g^{\symup{T}^* M}-g_{\epsilon}^{\symup{T}^* M}$. Now, for any $\delta>0$, when $\epsilon$ is sufficiently small, by Proposition \ref{prop:tb2}, the following inequality holds for any $s\in \Gamma(M,S(\symup{T}M)\otimes E)^G$,
    \begin{multline}
        \label{eq:sa1}
        \Vert (P_fD)^2(fs) - (P_fD_{\epsilon})^2(fs) \Vert^2_0 \le 2\Vert (D^2-D^2_{\epsilon})(fs) \Vert^2_0 + C_1(\epsilon) \Vert fs \Vert^2_0\\ \le \delta \Vert fs \Vert^2_2 + C_2(\epsilon) \Vert fs \Vert^2_0,
    \end{multline}
    where, as before, we use the trace theorem and the interpolation property of Sobolev spaces to deal with the lower order terms in the second inequality. By the G\"arding type inequality for $((P_fD_{\epsilon})^2,\mathscr{B}_+)$, one has
    \begin{equation}
        \label{eq:sa2}
        \Vert fs \Vert^2_2 \le C_3 \Vert (P_fD_{\epsilon}^2)(fs) \Vert^2_0 + C_4(\epsilon) \Vert fs \Vert^2_0.
    \end{equation}
    The key observation is that by the proof of Lemma \ref{lemma:garding2}, we can choose the constant $C_3$ in (\ref{eq:sa2}) independent of $\epsilon$. Hence, by (\ref{eq:sa1}) and (\ref{eq:sa2}),
    \begin{equation}
        \label{eq:sa3}
        \Vert (P_fD)^2(fs) - (P_fD_{\epsilon})^2(fs) \Vert^2_0 \le \delta C_3 \Vert (P_fD_{\epsilon})^2(fs) \Vert^2_0 + C_5(\epsilon) \Vert fs \Vert^2_0.
    \end{equation}
    Choose $\delta$ and $\epsilon$ correspondingly, such that $\delta C_3<1$. Then by (\ref{eq:sa3}) and the Rellich perturbation theorem, one concludes that $(P_fD)^2$ with domain $\mathscr{B}_+$ is a self-adjoint operator.
    
    Moreover, for any $s\in \Gamma(M,S(\symup{T}M)\otimes E)^G$, by (\ref{eq:sa2}), (\ref{eq:sa3})
    \begin{multline}
        \label{eq:sa4}
        \Vert (P_fD)^2(fs) \Vert^2_0 \ge \frac{1}{2} \Vert (P_fD_{\epsilon})^2(fs) \Vert^2_0 - \Vert (P_fD)^2(fs) -  (P_fD_{\epsilon})^2(fs) \Vert^2_0 \\ \ge (\frac{1}{2}- \delta C_3) \Vert (P_fD_{\epsilon})^2(fs) \Vert^2_0 - C_5(\epsilon) \Vert fs \Vert^2_0 
        \ge (\frac{1}{2C_3}- \delta) \Vert fs \Vert^2_2 - C_6(\epsilon) \Vert fs \Vert^2_0.
    \end{multline}
    By choosing $\delta$ properly such that $\frac{1}{2C_3}- \delta >0$, (\ref{eq:sa4}) turns out to be a G\"arding type inequality, which, as usual, implies the closed range and finite-dimensional kernel properties of $((P_fD)^2,\mathscr{B}_+)$. Combining with the self-adjoint property we just proved, we see that $(P_fD)^2$ with domain $\mathscr{B}_+$ is a self-adjoint Fredholm operator.   
    
    The last step is to remove the additional assumption on $E$. Since different metrics on $E$ lead to equivalent norms, we can choose any invariant metric on $E$ as we wish. As to the connection, one notices that the difference between any two connections on $E$ is a zeroth order operator. Thus when talking about Fredholm property, the operators related to different choices of connections on $E$ coincide up to a compact operator, therefore, having no effect on the conclusion.
    
    \end{proof}
    
    \begin{remark} 
        A few words about the spinor bundles of different metrics. It is pointed out in \citep{LawsonJr:1989ub} that in general there exists no canonical spinor bundle for different metrics. But the key point here is that although metrics (or spinor bundles) change, the boundary condition remains the same, which is enough for proving the Fredholm property of the boundary value problem.
    \end{remark}
    
    \begin{remark}
        \label{rk:ad}
        Needless to say, for the operator $(P_fD)^2$ with domain
        \begin{equation*}
            \mathscr{B}_-=\{ \phi\in \symbf{H}^2_f(M,S_-(\symup{T}M)\otimes E)^G|P_{< 0,f}\big(c(-e_n)\phi|_{\partial M}\big)= P_{\ge 0,f}\big((P_fD \phi)|_{\partial M}\big)=0 \},
        \end{equation*}
        the same proof of Theorem \ref{thm:so} will show its Fredholmness property.
    \end{remark}
    
    Now, we turn to our original goal: solving the Boundary Value Problem \ref{prob:1}, i.e. proving Theorem \ref{thm:main1}, in full generality.
    
    \begin{proof}[Proof of Theorem \ref{thm:main1} in the general case]
    By Remark \ref{rk:gc}, in the general case, one still has the G\"arding type inequality for $(P_fD_+,P_{\ge 0,f})$. As we have used for several times, finite-dimensional kernel and closed range property of $(P_fD_+,P_{\ge 0,f})$ follows. Furthermore, Remark \ref{rk:ad} implies that the dimension of the cokernel of $((P_fD)^2,\mathscr{B}_-)$ is finite. In fact, taking the domain of $(P_fD_+,P_{\ge 0,f})$ and $((P_fD)^2,\mathscr{B}_-)$ into consideration, one has the following inclusion relation about the range of two operators,
    \begin{equation*}
    	\ran ((P_fD)^2,\mathscr{B}_-) \subseteq \ran (P_fD_+,P_{\ge 0,f}),
    \end{equation*}
    from which the finite-dimensional cokernel property of $(P_fD_+,P_{\ge 0,f})$ follows.
    \end{proof}
    
    Recall that $g^{\symup{T}^*M}_{\epsilon}$ is the product metric that approximates the original metric, and $D_{\epsilon}$ is the corresponding Dirac operator. By inspecting the principal symbol of the operator, one knows that the limit operator of $P_fD_{\epsilon}$ when $\epsilon$ approximates $0$, coincides with $P_fD$ up to a zeroth order operator. Therefore, one has the following useful corollary.
    \begin{corollary}
        \label{cor:prod}
        The index of $(P_fD_+,P_{\ge,f})$ is equal to the index of $(P_fD_{\epsilon,+},P_{\ge,f,\epsilon})$ when $\epsilon$ is small enough.
    \end{corollary}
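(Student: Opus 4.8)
The plan is to connect $g^{\symup{T}M}$ to $g_{\epsilon}^{\symup{T}M}$ by the straight path of metrics and to show that the index of the corresponding APS boundary value problem stays constant. The one real difficulty is that $P_{\ge 0,f}$ is a spectral projection of $P_fD_{\partial M,+}P_f$, so it depends on the metric and will jump whenever an eigenvalue of $P_fD_{\partial M,+}P_f$ crosses $0$; hence the first order problems do not obviously form a continuous family. I would get around this by passing to the self-adjoint square $(P_fD)^2$, for which --- by \citep[Lemma 3.1]{Gilkey:1993dm}, exactly as used in the proof of Theorem \ref{thm:so} --- the boundary subspace $\mathscr{B}_{\pm}$ depends only on $g^{\symup{T}M}|_{\partial M}$ and so is unchanged along the path, and by then exploiting that $((P_fD)^2,\mathscr{B}_+)$ and $((P_fD)^2,\mathscr{B}_-)$ are isospectral away from $0$.

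Concretely, put $g_t=(1-t)g^{\symup{T}M}+tg_{\epsilon}^{\symup{T}M}$ for $t\in[0,1]$. Each $g_t$ is a $G$-invariant metric, coincides with $g^{\symup{T}M}$ on $\partial M$ (because $g_{\epsilon}^{\symup{T}M}|_{\partial M}=g^{\symup{T}M}|_{\partial M}$), and obeys $\Vert g_t-g^{\symup{T}M}\Vert_{\mathscr{C}^0(\overline{U'})}\le t\Vert g_{\epsilon}^{\symup{T}M}-g^{\symup{T}M}\Vert_{\mathscr{C}^0(\overline{U'})}\le C\epsilon$ uniformly in $t$. Let $D_t$ be the Dirac operator of $g_t$ (the spinor bundles are identified in the standard way, cf.\ the remark on spinor bundles of different metrics in Section \ref{sec:product}, so that all operators act on one fixed Hilbert space). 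By Theorem \ref{thm:main1} the operator $(P_fD_{t,+},P_{\ge 0,f,t})$ is Fredholm; by Theorem \ref{thm:so} and Remark \ref{rk:ad} the operators $((P_fD_t)^2,\mathscr{B}_+)$ and $((P_fD_t)^2,\mathscr{B}_-)$ are self-adjoint Fredholm, the common domain $\mathscr{B}_{\pm}$ being $t$-independent by the cited lemma of \citep{Gilkey:1993dm}. The endpoints $t=0$ and $t=1$ recover $(P_fD_+,P_{\ge 0,f})$ and $(P_fD_{\epsilon,+},P_{\ge 0,f,\epsilon})$. Running the estimate in the proof of Theorem \ref{thm:so} with all constants uniform in $t$ --- the second order part of $(P_fD_t)^2-(P_fD_{t'})^2$ is controlled by $\Vert g_t-g_{t'}\Vert_{\mathscr{C}^0}\le C|t-t'|\epsilon$, and its lower order part is absorbed by the trace theorem and interpolation --- shows, for $\epsilon$ small, that $t\mapsto((P_fD_t)^2,\mathscr{B}_{\pm})$ are norm-resolvent continuous families of self-adjoint Fredholm operators obeying a G\"arding inequality with $t$-independent constants.

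Two facts then close the argument. First, exactly as in the general-case proof of Theorem \ref{thm:main1} combined with the general-metric analogue of Theorem \ref{thm:bd2} (which needs only the obvious adaptation, or a reduction to the product case via the cited lemma of \citep{Gilkey:1993dm}): Green's formula for $D_t$ together with the self-adjointness of $((P_fD_t)^2,\mathscr{B}_{\pm})$ yields $\ker((P_fD_t)^2,\mathscr{B}_+)=\ker(P_fD_{t,+},P_{\ge 0,f,t})$ and $\ker((P_fD_t)^2,\mathscr{B}_-)=\coker(P_fD_{t,+},P_{\ge 0,f,t})$, so that
\begin{equation*}
\ind(P_fD_{t,+},P_{\ge 0,f,t})=\dim\ker\big((P_fD_t)^2,\mathscr{B}_+\big)-\dim\ker\big((P_fD_t)^2,\mathscr{B}_-\big).
\end{equation*}
Second, $P_fD_{t,+}$ intertwines $(P_fD_t)^2$ on $\mathscr{B}_+$ with $(P_fD_t)^2$ on $\mathscr{B}_-$ and $P_fD_{t,-}$ goes back; a direct check of the two conditions defining $\mathscr{B}_{\pm}$ shows that, for $\mu\neq 0$, $P_fD_{t,\pm}$ sends a $\mu$-eigensection in $\mathscr{B}_{\pm}$ to a $\mu$-eigensection in $\mathscr{B}_{\mp}$, with $\tfrac{1}{\mu}P_fD_{t,\mp}$ as inverse; hence the two self-adjoint operators have identical spectra, with equal multiplicities, off $0$.

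Combining these: fix $t_0$ and a small $\delta>0$ such that $0$ is the only eigenvalue of $((P_fD_{t_0})^2,\mathscr{B}_{\pm})$ in $(-\delta,\delta)$. By norm-resolvent continuity the total multiplicity of eigenvalues of $((P_fD_t)^2,\mathscr{B}_{\pm})$ in $(-\delta,\delta)$ is constant for $t$ near $t_0$, and by the isospectrality off $0$ the multisets of nonzero such eigenvalues coincide for $\mathscr{B}_+$ and $\mathscr{B}_-$; subtracting, $\dim\ker((P_fD_t)^2,\mathscr{B}_+)-\dim\ker((P_fD_t)^2,\mathscr{B}_-)$ is constant near $t_0$. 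Thus $t\mapsto\ind(P_fD_{t,+},P_{\ge 0,f,t})$ is locally constant, hence constant on $[0,1]$, and evaluating at $t=0$ and $t=1$ gives the corollary. I expect the bookkeeping of this last step --- converting ``isospectral off $0$'' into cancellation of the kernel-dimension jumps --- to be the crux; the uniformity in $t$ of the estimates of Theorem \ref{thm:so}, needed although $\{g_t\}$ converges only in $\mathscr{C}^0$, is routine but has to be verified.
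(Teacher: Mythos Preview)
Your approach is correct but considerably more elaborate than the paper's. The paper dispatches the corollary in one sentence: since $g_{\epsilon}^{\symup{T}^*M}\to g^{\symup{T}^*M}$ in $\mathscr{C}^0$, the principal symbol of $P_fD_{\epsilon}$ converges to that of $P_fD$, so for small $\epsilon$ the difference $P_fD_{\epsilon}-P_fD$ is, up to a zeroth-order (hence compact) term, a first-order operator with arbitrarily small coefficients; stability of the Fredholm index under such perturbations then gives the result directly, with the boundary-condition compatibility already encoded in $\mathscr{B}_+=\mathfrak{B}_+(\epsilon)$ from the proof of Theorem~\ref{thm:so}. No homotopy, no isospectrality, no resolvent continuity.

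Your route---connecting $g$ to $g_{\epsilon}$ by a linear path, lifting to the self-adjoint squares $((P_fD_t)^2,\mathscr{B}_{\pm})$ with $t$-independent domain, and tracking the kernel dimensions via isospectrality off $0$---is sound and gives more structural information, but is overkill here. One step you should not wave away: the identity $\coker(P_fD_{t,+},P_{\ge 0,f,t})=\ker((P_fD_t)^2,\mathscr{B}_-)$, which you attribute to a ``general-metric analogue of Theorem~\ref{thm:bd2}''. The paper deliberately does \emph{not} prove that analogue; the whole point of passing to the second-order problem in Section~\ref{sub:general} is to avoid it. What actually closes your gap is the combination of two inequalities already available: the range inclusion $\ran((P_fD_t)^2,\mathscr{B}_-)\subseteq\ran(P_fD_{t,+},P_{\ge 0,f,t})$ used in the general-case proof of Theorem~\ref{thm:main1}, together with the self-adjointness of $((P_fD_t)^2,\mathscr{B}_-)$, gives $\dim\coker(P_fD_{t,+})\le\dim\ker((P_fD_t)^2,\mathscr{B}_-)$; Green's formula gives the reverse. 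You should state this explicitly rather than invoke an adjoint identification the paper never establishes.
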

    The above corollary implies that one can always deform the general metric to the product one with the index fixed.
    
    Theorem \ref{thm:main1} justifies our definition of the APS type index on non-compact manifolds. To be more complete, let us discuss the dependence of the index on the choice of cut-off functions to some extent. For two cut-off functions $f_1$ and $f_2$, there exists a natural bijection $T$, between $\mathbf{H}^0_{f_1}(\partial M,(S(\symup{T}M)\otimes E)|_{\partial M})^G$ and $\mathbf{H}^0_{f_2}(\partial M,(S(\symup{T}M)\otimes E)|_{\partial M})^G$, which is induced by the mapping $f_1s\mapsto f_2s$, $s\in \Gamma(\partial M,(S(\symup{T}M)\otimes E)|_{\partial M})^G$. When 
$G$ is unimodular and the cut-off functions are normalized, Proposition \ref{prop:chi} asserts that $T$ is an isometry, which implies that the APS type index is independent of the choice of normalized cut-off functions. For the general group and cut-off functions, however, this is not the case. Such defect can be remedied by noticing the following fact. For a family of cut-off functions $\{f_t,0\le t\le 1\}$, by using $T_t$, the bijection between $\mathbf{H}^0_{f_t}(\partial M,(S(\symup{T}M)\otimes E)|_{\partial M})^G$ and $\mathbf{H}^0_{f_0}(\partial M,(S(\symup{T}M)\otimes E)|_{\partial M})^G$, $\{T_tP_{f_t}D_{\partial M}T^{-1}_t,0\le t\le 1\}$ will be a continous family of closed operators with respect to $t$. Due to this observation, using a result of Kato, c.f. \citep[Ch. IV, \S3.5]{Kato:1991wl}, one can conclude that the eigenvalues of $P_{f_t}D_{\partial M}$ vary continuously with respect to $t$. Following \citep{Atiyah:1976vh}, the spectral flow of $\{P_{f_t}D_{\partial M},0\le t\le 1\}$ can be defined. Using the argument in \citep{Dai:1998ws,Dai:2000vf}, one has the following equality,
    \begin{equation}
        \label{eq:sf}
        \ind (P_{f_1}D_+, P_{\ge 0,f_1}) - \ind (P_{f_0}D_+, P_{\ge 0,f_0}) = \mathrm{sf}\,\{ P_{f_t}D_{\partial M,+}, 0\le t \le 1\},
    \end{equation}
    where $\mathrm{sf}\,\{ P_{f_t}D_{\partial M,+}, 0\le t \le 1\}$ is the spectral flow of $\{P_{f_t}D_{\partial M,+},0\le t\le 1\}$. In this sense, (\ref{eq:sf}) describes the dependence of the APS type index on cut-off functions.


\section{Two indices on non-compact manifolds} 
\label{sec:braverman}

After finding a suitable generalization for the APS type index, we can try to compare two kinds of indices on non-compact manifolds as illustrated in Introduction. In this section, we will restrict to the usual geometric quantization setting. In Subsection \ref{sub:hmind}, we review an index introduced by \citet{Hochs:2015iu}. Subsection \ref{sub:equind} contains the main result of this section, in which we establish an equality between the APS type index and the Hochs-Mathai (HM) type index, which generalizes (\ref{eq:mz2}), more precisely \citep[Theorem 1.5]{Ma:2014ja}, to the non-compact case. In Subsection \ref{sub:ghm}, a generalization of the Hochs-Mathai type index is discussed briefly.

\subsection{The Hochs-Mathai type index} 
\label{sub:hmind}

In a seminal paper \citep{Guillemin:1982ts}, Guillemin and Sternberg worked out the first mathematical rigorous example of a physics principle: $[Q,R]=0$, which has also become widely accepted in mathematics society nowadays. Roughly, it asserts that the geometric quantization procedure commutes with symplectic reduction. To generalize this principle to the case that both the manifold and the group action are non-compact, \citet{Hochs:2015iu} introduced a new index. Let us describe it briefly.

Let $N$ be a non-compact manifold without boundary, on which a locally compact Lie group $G$ acts properly. Notice that we do \uline{not} assume that the $G$-action on $N$ is cocompact. The Lie algebra of $G$ is denoted by $\mathfrak{g}$. Following the usual geometric quantization setting, we assume that: (i) $(N,\omega)$ is a symplectic manifold with a symplectic form $\omega$, and $G$ acts on $N$ symplectically; (ii) there exists a line bundle $L$ over $N$, the so-called prequantum line bundle, and the $G$-action can be lifted up to $L$. About $L$, we further suppose that one can find a $G$-invariant Hermitian metric $h^L$ over it, as well as a $G$-invariant Hermitian connection $\nabla^L$, which satisfies $\frac{\sqrt{-1}}{2\pi}(\nabla^L)^2= \omega$.

Take a $G$-invariant almost complex structure $J$ of $TN$ such that $\omega(\cdot,J\cdot)$ is a $G$-invariant Riemannian metric on $N$ and $J$ preserves $\omega$. As before, the Riemannian metric $\omega(\cdot,J\cdot)$ is denoted by $\langle \cdot, \cdot \rangle$, with respect to which $N$ is assumed to be a complete Riemannian manifold. Using the prequantum line bundle $(L,h^L,\nabla^L)$, one can define the momentum map $\mu:N \rightarrow \mathfrak{g}^*$ by the classical Kostant formula: for $s\in \Gamma(N,L)$ and $X\in \mathfrak{g}$,
\begin{equation}
    \label{eq:kos}
    2\pi \sqrt{-1} (\mu,{X})s = \nabla^L_{X^N}s - L_{X}s,
\end{equation}
where $X^N$ is the vector field generated by $X$ and $L_X$ is the Lie derivative of $X$ on $L$. By (\ref{eq:kos}), one can verify that $\mu$ is an equivariant map and satisfies the following equality,
\begin{equation}
    d(\mu,{X}) = i_{X^N} \omega.
\end{equation}

As in the compact group case, we would like to define the Kirwan vector field by means of the momentum map. The main obstacle here is that, for a non-compact group, $\mathfrak{g}^*$ has no $\mathrm{Ad}^*(G)$-invariant metric. To overcome this difficulty, \citet{Hochs:2015iu} introduced a family of metrics for $\mathfrak{g}^*$ depending on $N$. In fact, they showed the existence of a $G$-invariant metric in the sense of \citep[(13)]{Hochs:2015iu}, $\{(\cdot,\cdot)_n\}_{n\in N}$, on the trivial bundle $N\times \mathfrak{g}^*$. Recall that the $G$-action on $N\times \mathfrak{g}^*$ is defined by
\begin{equation*}
    g\cdot(n,\xi) = (gn, \mathrm{Ad}^*(g)\xi),
\end{equation*}
where $n\in N$, $\xi \in \mathfrak{g}^*$, $g\in G$ and $\mathrm{Ad}^*$ is the coadjoint action on $\mathfrak{g}^*$.

Let $\Phi$ be the metric dual of $\mu$ with respect to $\{(\cdot,\cdot)_n\}_{n\in N}$. As usual, one defines the Kirwan vector field in the following way, for $x\in N$,
\begin{equation*}
    \Phi^N(x) = \big(\dfrac{d}{dt}\Big\vert_{t=0}e^{t \Phi(x)}\big)(x).
\end{equation*}
To give another description of $\Phi^N$, consider the auxiliary function $\widehat{\mathcal{H}}\in \mathscr{C}^{\infty}(N\times N,\mathbb{R})$ defined by
\begin{equation*}
    \widehat{\mathcal{H}}(n,n')= (\mu(n),\mu(n))_{n'},
\end{equation*}
where $n,n'\in N$. Clearly, $\mathcal{H}(n)=\widehat{\mathcal{H}}(n,n)$ is a $G$-invariant function on $N$. Write $d_1 \mathcal{H}$ for the derivative of $\widehat{\mathcal{H}}$ with respect to the first coordinate:
\begin{equation*}
    (d_1 \mathcal{H})_n = \big(d\big(n'\mapsto \widehat{\mathcal{H}}(n',n)\big)\big)_n.
\end{equation*}
Now define vector field $X_1$ to be 
\begin{equation*}
    d_1 \mathcal{H} = \omega(X_1,\cdot) \in \Omega^1(N).
\end{equation*}
In \citep[Lemma 2.3]{Hochs:2015iu}, Hochs and Mathai proved the following equality, which is an analogue of \citep[(1.19)]{Zhang:1998tl},
\begin{equation}
    \label{eq:tz}
    X_1 = 2 \Phi^N.
\end{equation}

Following \citep{Hochs:2015iu}, we make the following compactness assumption about $\Phi^N$.
\begin{assumption}
    $\Phi^N$ is non-vanishing outside a cocompact subset of $N$.
\end{assumption}

\begin{figure}[htbp]
    
    \centering
    \includegraphics[scale=0.4]{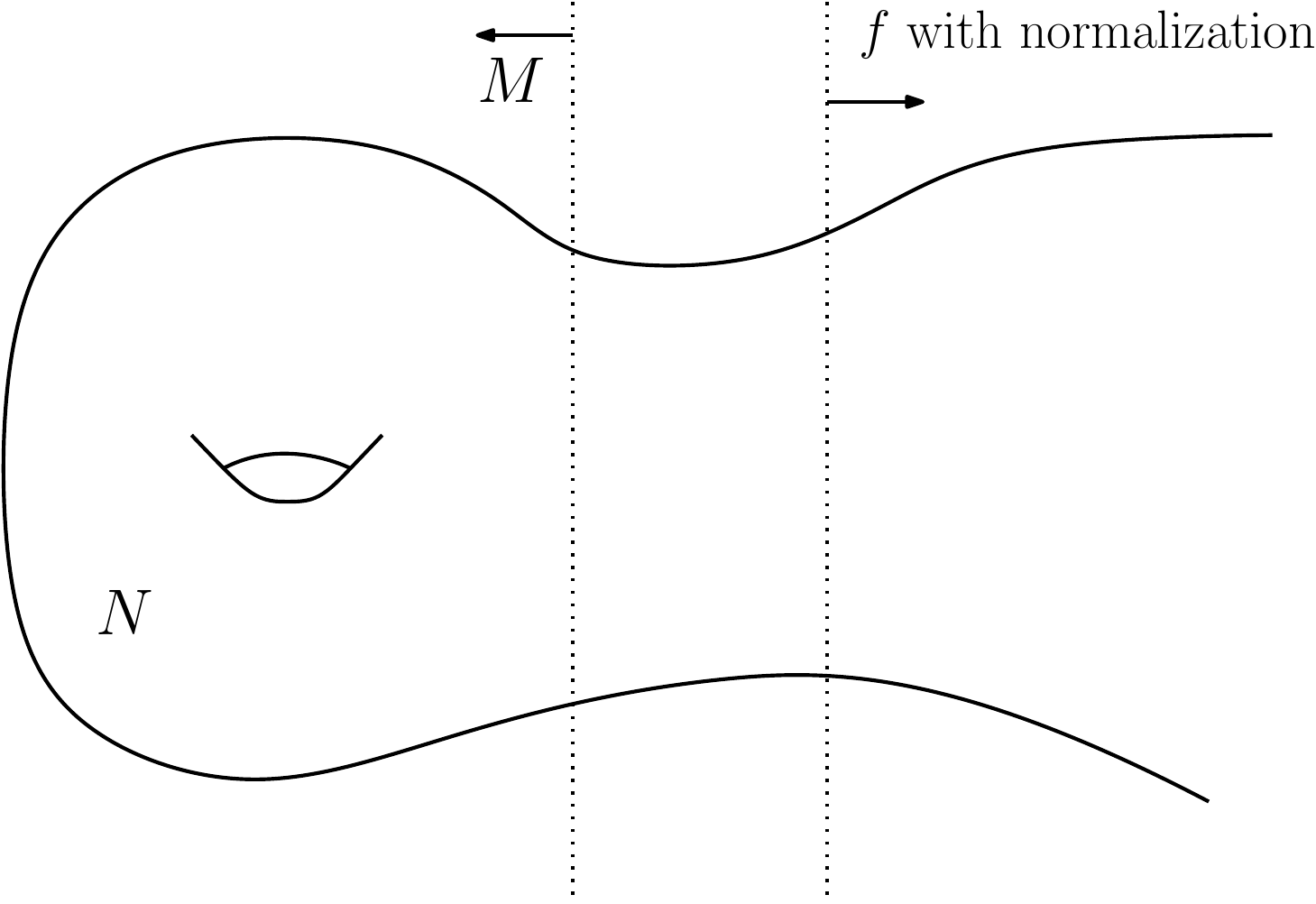}
    \caption{}
    \label{fig:mfd}
\end{figure}

With the above assumption, one can choose a $G$-invariant cocompact submanifold  $M\subseteq N$ that has the same dimension as $N$ and contains all zeroes of $\Phi^N$. Since the $G$-action on $N$ is not cocompact, $M$ has a boundary. Without loss of generality, we can further assume that $\partial M$ is smooth. In fact, due to the above assumption and the $G$-invariance of $\mathcal{H}$, when $a$ is a sufficiently large real number, by Sard's theorem on the regular values, $M$ can be chosen as $\mathcal{H}^{-1}([0,a])$. Moreover, considering the latter usage, one also supposes that for any point $x\in \partial M$, $\Phi^M(x) \neq 0$.\footnote{In view of symbol conformity, $\Phi^N|_M$ will be denoted by $\Phi^M$.} With $M$ fixed, we can specify our choice of cut-off functions used on $N$. To apply the results of \citep{Hochs:2015iu}, we need to combine the cut-off function used in \citep{Hochs:2015iu} and the one used in this paper. Therefore, we make the following assumption on the cut-off function used on $N$. On $M$, which is cocompact, one can use the cut-off function $f$ defined in Section \ref{sec:aps}. Outside an open neighborhood of $M$, whose closure is also cocompact, we require that the cut-off function satisfies the normalization (\ref{eq:xd}). Such a cut-off function on $N$ is still denoted by $f$. The relation between $M$, $N$ and the cut-off function is illustrated in Figure~\ref{fig:mfd}.

As in \citep{Hochs:2015iu}, \citep{Ma:2014ja}, one uses the following deformed Dirac operator on $N$,
\begin{equation*}
    D^{L}_{N,Th} = D^L_{N} + \sqrt{-1}Thc({\Phi}^{N}),
\end{equation*}
where $D^L$ is the Spin$^c$ Dirac operator with coefficient bundle $L$, $T$ is a positive constant and $h\in \mathscr{C}^\infty(N,\mathbb{R})^G$ is an auxiliary positive function which grows rapidly enough near the infinity and satisfies $h|_{M}=1$. For precise conditions on the auxiliary function $h$, we refer to \citep[\S 6.2]{Hochs:2015iu}. When $D^L_{N,Th}$ is restricted to $M$, one recovers the following deformed Dirac operator which has been used in \citep{Tian:1999tq} for manifolds with boundary,
\begin{equation*}        
    D^L_{M,T} = D^L_M + \sqrt{-1}Tc(\Phi^M).
\end{equation*}

To be compatible with the deformed Dirac operator $D^{L}_{N,Th}$, one introduces a new Hilbert space $\symbf{H}^1_{{f},Th,\pm}(N,L)^G$. As a preparation, we single out a useful subspace of smooth sections on $N$. For $s\in \Gamma_{tc}(N,S(\symup{T}N)\otimes L)$, we means that $\supp (s)/G$ is a compact set. Clearly, for such a section, $fs$ will be a section with compact support. Now $\symbf{H}^1_{{f},Th,\pm}(N,L)^G$ is defined to be the completion of $\phi\in{f} \Gamma_{tc}(N,S_{\pm}(\symup{T}N)\otimes L)^G$ with respect to the norm,   
\begin{equation}
    \label{eq:h1n}
    \|\phi\|^2_{h,1} = \|\phi\|^2_0 + \|D^L_{N,Th} \phi\|^2_0.
\end{equation}
Note that when restricted to $M$, which is a manifold with non-empty boundary, the new norm (\ref{eq:h1n}) isn't equivalent to the $\mathbf{H}^1$-norm defined in (\ref{eq:sn}) in general. Nonetheless, the G\"arding type inequality (\ref{eq:gard}) guarantees the equivalence of two norms when restricted to subspaces satisfying the APS boundary condition.

Compared to the usual Witten type deformation for Dirac operators introduced by Tian and Zhang, c.f. \citep{Zhang:1998tl}, \citep{Tian:1999tq}, $D^L_{N,Th}$ involves an auxiliary function $h$. The reason is that to obtain a finite index on a non-compact manifold, 
\citet{Hochs:2015iu} pointed out that one needs to rescale the family of metric $\{(\cdot,\cdot)_n\}_{n\in N}$, which is equivalent to multiplying the Kirwan vector field $\Phi^{N}$ by a function $h$.\footnote{An idea goes back to \citet{Braverman:2002wy}.} Now, one can define the Hochs-Mathai type index as follows.
\begin{definition}
    \label{def:hm}
    The Hochs-Mathai(HM) type index is defined to be the index of the following operator,
    \begin{equation}
    	\label{eq:hm1}
        P_{{f}}D^L_{N,Th,+} = P_{{f}} D^L_{N,+} + \sqrt{-1}Thc({\Phi}^{N}):\, \mathbf{H}^1_{{f},Th,+}(N,{L})^G \rightarrow \mathbf{H}^0_{f}(N,S_-(\symup{T}N)\otimes {L})^G.
    \end{equation}
\end{definition}

To be consistence with convention used in this paper, the definition we use here is a little different from \citep[\S 4.1]{Hochs:2015iu}. But to show the well-definedness of our HM type index, let us recall the definition in \citep{Hochs:2015iu} briefly. For any $G$-invariant section $s$, Hochs and Mathai introduced an operator $\widetilde{D}(fs)=fDs$ and their index is defined to be the index of $\widetilde{D}^L_{N,Th,+}=\widetilde{D}^L_{N,+}+\sqrt{-1}Thc({\Phi}^{N})$ for suitable $h$ and $T$ large enough. The basis to show the Fredholm property of $\widetilde{D}^L_{N,Th,+}$ is the following inequality proved in \citep[Proposition 6.1]{Hochs:2015iu},
\begin{equation}
    \label{eq:hme1}
    \Vert \widetilde{D}^L_{N,Th,+}(fs)\Vert_0^2 \ge C(\Vert\widetilde{D}^L_{N,+}(fs)\Vert^2_0 + T^2\Vert fs \Vert^2_0),
\end{equation}
where $s\in \Gamma_{tc}(N,S_+(\symup{T}N)\otimes {L})^G$ and $C$ is a positive constant. Using the definition of $K_f$ in (\ref{eq:defk}), the difference between $P_fD^L_{N,Th,+}$ and $\widetilde{D}^L_{N,Th,+}$ is
\begin{equation}
    \label{eq:kf}
    fK_fs=(P_fD^L_{N,Th,+}-\widetilde{D}^L_{N,Th,+})(fs).
\end{equation}
By \citep[Proposition 6.6]{Hochs:2015iu}, one can find $h$ such that the following pointwise controls on $K_f$ and $\Phi^N$ hold,
\begin{subequations}
    \begin{align}
        \Vert K_f \Vert &\le C_0 \Vert h \Phi^N \Vert, \label{eq:hme2a} \\
        \Vert h \Phi^N \Vert &\ge 1, \label{eq:hme2b}
    \end{align}
\end{subequations}
where $C_0$ is a positive constant. Moreover, during the proof of \citep[Proposition 6.1, pp.~403]{Hochs:2015iu}, Hochs and Mathai proved the following inequality. With positive constants $C_1,C_2$,
\begin{equation}
    \label{eq:hme3}
    \big((\widetilde{D}^{L,*}_{N,Th,+}\widetilde{D}^L_{N,Th,+} - \widetilde{D}^{L,*}_{N,+}\widetilde{D}^L_{N,+})fs,fs\big) \ge (T^2-C_1 T) (\Vert h \Phi^N \Vert^2fs, fs) - C_2 (fs, fs),
\end{equation}
where the asterisk denotes the adjoint of the corresponding operator. By (\ref{eq:kf}), (\ref{eq:hme2a}), (\ref{eq:hme2b}) and (\ref{eq:hme3}), an inequality similar to (\ref{eq:hme1}) holds for $P_fD^L_{N,Th,+}$,
\begin{equation}
    \label{eq:hme1'}
    \Vert P_f{D}^L_{N,Th,+}(fs)\Vert_0^2 \ge C'(\Vert P_f{D}^L_{N,+}(fs)\Vert^2_0 + T^2\Vert fs \Vert^2_0),
\end{equation}
where $C'$ is another positive constant. By (\ref{eq:hme1'}), using a linear homotopy between $\widetilde{D}^L_{N,Th,+}$ and $P_f{D}^L_{N,Th,+}$ and \citep[Proposition 4.7]{Hochs:2015iu}, one reaches the following proposition.
\begin{proposition}
    \label{prop:hm}
    For suitable $h$ and $T$ large enough, $P_f{D}^L_{N,Th,+}$ has a well-defined index which coincides with the index of $\widetilde{D}^L_{N,Th,+}$ in \emph{\citep{Hochs:2015iu}}.
\end{proposition}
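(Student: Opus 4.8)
The plan is first to observe that any operator satisfying an estimate of the shape (\ref{eq:hme1}) is automatically Fredholm on the relevant pair of Hilbert spaces, so that both $\widetilde{D}^L_{N,Th,+}$ (by (\ref{eq:hme1})) and $P_fD^L_{N,Th,+}$ (by (\ref{eq:hme1'})) qualify, and then to join them through a linear homotopy of such operators. As a preliminary, one checks that the two operators live on one and the same pair of spaces: by (\ref{eq:kf}) their difference on $f\Gamma_{tc}(N,S_+(\symup{T}N)\otimes L)^G$ is the zeroth order term $fK_fs$, and combining (\ref{eq:hme2a}), (\ref{eq:hme2b}) with the inequality (\ref{eq:hme3}) gives
\begin{equation*}
    \Vert fK_f s \Vert_0 \le \varepsilon(T)\big(\Vert \widetilde{D}^L_{N,Th,+}(fs) \Vert_0 + \Vert fs \Vert_0\big),\qquad \varepsilon(T)\to 0 \text{ as } T\to\infty;
\end{equation*}
hence for $T$ large the norm (\ref{eq:h1n}) is equivalent to the norm built from $\widetilde{D}^L_{N,Th}$ in place of $D^L_{N,Th}$, so that $\mathbf{H}^1_{f,Th,+}(N,L)^G$ is independent of that choice. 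The Fredholm mechanism itself is the one already used in \citep[Proposition 6.1]{Hochs:2015iu}: an estimate of type (\ref{eq:hme1}) is a G\"arding type inequality, which together with interior elliptic regularity --- there is no boundary term to control since $\partial N=\emptyset$ --- and a Peetre type argument yields a closed range and a finite-dimensional kernel, while the same estimate for the odd part (note that $\sqrt{-1}c(\Phi^N)$ is self-adjoint, so $D^L_{N,Th}$ is formally self-adjoint and reverses the $\mathbb{Z}_2$-grading) controls the cokernel. Thus $P_fD^L_{N,Th,+}$ is Fredholm, so the index of Definition \ref{def:hm} makes sense.

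To compare the indices I would run the affine homotopy
\begin{equation*}
    D_t \;:=\; \widetilde{D}^L_{N,Th,+} + t\big(P_fD^L_{N,Th,+} - \widetilde{D}^L_{N,Th,+}\big),\qquad t\in[0,1],
\end{equation*}
so that $D_0=\widetilde{D}^L_{N,Th,+}$, $D_1=P_fD^L_{N,Th,+}$, and, by (\ref{eq:kf}), $D_t(fs)=\widetilde{D}^L_{N,Th,+}(fs)+t\,fK_fs$ for $s\in\Gamma_{tc}(N,S_+(\symup{T}N)\otimes L)^G$. The crux is to verify that the Fredholm estimate survives uniformly along this path. Since $t\le 1$, the bound on $\Vert fK_fs \Vert_0$ recalled above gives $\Vert D_t(fs) \Vert_0 \ge (1-\varepsilon(T))\Vert \widetilde{D}^L_{N,Th,+}(fs) \Vert_0 - \varepsilon(T)\Vert fs \Vert_0$, and then, using (\ref{eq:hme1}) for $\widetilde{D}^L_{N,Th,+}$, one obtains for $T$ large enough and every $t\in[0,1]$
\begin{equation*}
    \Vert D_t(fs) \Vert_0^2 \;\ge\; C''\big(\Vert \widetilde{D}^L_{N,+}(fs) \Vert_0^2 + T^2\Vert fs \Vert_0^2\big)
\end{equation*}
with $C''$ independent of $t$. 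Consequently each $D_t$ is Fredholm by the mechanism of the previous paragraph, and its source and target are the ($t$-independent) spaces $\mathbf{H}^1_{f,Th,+}(N,L)^G$ and $\mathbf{H}^0_f(N,S_-(\symup{T}N)\otimes L)^G$.

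Finally, $t\mapsto D_t$ is affine, hence continuous in the operator norm as a map from $[0,1]$ into bounded operators $\mathbf{H}^1_{f,Th,+}(N,L)^G\to\mathbf{H}^0_f(N,S_-(\symup{T}N)\otimes L)^G$, so $\{D_t\}_{t\in[0,1]}$ is a norm-continuous path of Fredholm operators. By the homotopy invariance of the Fredholm index, in the form of \citep[Proposition 4.7]{Hochs:2015iu}, the index is constant along the path, so
\begin{equation*}
    \ind\big(P_fD^L_{N,Th,+}\big) = \ind D_1 = \ind D_0 = \ind\big(\widetilde{D}^L_{N,Th,+}\big),
\end{equation*}
and $\ind\widetilde{D}^L_{N,Th,+}$ is the index defined in \citep{Hochs:2015iu}. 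I expect the main obstacle to be exactly the uniform estimate in the middle step: one has to carry the extra term $t\,fK_fs$ through the $T$-large bookkeeping of \citep[Proposition 6.1]{Hochs:2015iu} and absorb it into $\widetilde{D}^L_{N,Th,+}(fs)$ without destroying the positivity, which is what forces $T$ to be large. Once this quantitative point is settled, the norm-continuity of the family and the invariance of the index are formal, and the proposition follows.
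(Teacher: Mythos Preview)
Your proposal is correct and follows essentially the same approach as the paper: derive the analogue of (\ref{eq:hme1}) for $P_fD^L_{N,Th,+}$ (this is (\ref{eq:hme1'})), run the linear homotopy between $\widetilde{D}^L_{N,Th,+}$ and $P_fD^L_{N,Th,+}$, and invoke \citep[Proposition 4.7]{Hochs:2015iu}. The paper is terser---it simply says ``using a linear homotopy\dots and Proposition~4.7''---whereas you spell out the uniform estimate along the path and the equivalence of the domain norms; these are exactly the details the paper leaves implicit.
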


\begin{remark}
    \label{rk:hm}
     If $G$ is unimodular and the cut-off function is normalized, by Proposition \ref{prop:chi} or \citep[(2.26)]{Mathai:2010bt}, one has $P_fD(fs) = \widetilde{D}(fs)$. Thus, in this situation, not only the indices, but also the operators themselves coincide. 
\end{remark}

On the other side, we recall the APS boundary condition for the deformed Dirac operator $P_fD^L_{M,T}$. As in \citep[(1.11)]{Ma:2014ja} and \citep[(1.20)]{Tian:1999tq}, let $D^L_{\partial M, T}$ be the operator induced by $D^L_{M,T}$ on the boundary,
    \begin{equation} 
        \label{eq:pmo}       
        D^L_{\partial M, T} = D^L_{\partial M} - \sqrt{-1}Tc(e_n)c(\Phi^M).
    \end{equation}
    Now the spectral projection operator corresponding to the nonnegative eigenvalues (resp. negative eigenvalues) of $P_f D^L_{\partial M, T}$ will be denoted by $P_{\ge 0,f,T}$ (resp. $P_{<0,f,T}$). With $P_{\ge 0,f,T}$ (resp. $P_{<0,f,T}$), the APS boundary condition for $P_fD^L_{M,T,\pm}$ can be defined as in Definition \ref{def:aps}.
    
    With these preparations, we can state the main result of this section.
    \begin{theorem}
        \label{thm:fin}
        For $T> 0$ large enough, the following equality holds between the Hochs-Mathai type index and the Atiyah-Patodi-Singer type index,
    	\begin{equation}
    		\emph{\text{\texttt{(HM type)}}} \,\ind P_{f}D^L_{N,Th,+} = \ind (P_{f}D^L_{M,T,+},P_{\ge 0,f,T})\,\emph{\text{\texttt{(APS type)}}}.
    	\end{equation}
    \end{theorem}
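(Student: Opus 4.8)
The plan is to prove the equality by a cutting argument along $\partial M$: split $N$ into $M$ and its complement, observe that the index of the deformed operator splits accordingly into two Atiyah--Patodi--Singer type problems with complementary boundary conditions, and show that the piece coming from the complement contributes nothing because there $\Phi^N$ is nowhere zero. I would first dispose of the reductions. The term $\sqrt{-1}Tc(\Phi^M)$ is a zeroth order, hence relatively compact, perturbation, so Theorem \ref{thm:main1} --- together with the G\"arding inequality (\ref{eq:gard}), which persists for the deformed boundary operator $D^L_{\partial M,T}$ of (\ref{eq:pmo}) with $T$-dependent constants --- shows that $(P_fD^L_{M,T,+},P_{\ge 0,f,T})$ is Fredholm, while Proposition \ref{prop:hm} gives the Fredholmness of $P_fD^L_{N,Th,+}$. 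By Corollary \ref{cor:prod} we may assume $N$, and hence $M$, has product structure near $\partial M$; by the homotopy invariance of the index and Proposition \ref{prop:hm} we may fix $h$ as in (\ref{eq:hme2a})--(\ref{eq:hme2b}) and take $T$ as large as we wish. A useful simplification, available precisely because the construction forces $\Phi^M$ to be nowhere zero on $\partial M$: for $T$ large the deformed boundary operator $D^L_{\partial M,T}$ is invertible --- its zeroth order deformation term is bounded below, by a Bochner-type estimate on $\partial M$ --- so $0\notin\symup{sp}(P_fD^L_{\partial M,T}P_f)$, and the spectral projections $P_{\ge 0,f,T}$, $P_{<0,f,T}$ split the boundary data with no $\ker D^L_{\partial M,T}$ term to keep track of.

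The heart of the argument is the splitting formula
\begin{equation*}
\ind P_fD^L_{N,Th,+}=\ind\bigl(P_fD^L_{M,T,+},P_{\ge 0,f,T}\bigr)+\ind\bigl(P_fD^L_{W,Th,+},B_W\bigr),
\end{equation*}
where $W:=N\setminus\mathrm{int}(M)$ and $B_W$ is the elliptic boundary condition at $\partial M$ complementary to APS (the $c(e_n)$-twisted $P_{<0,f,T}$-condition). I would establish this exactly as the domain identity in Theorem \ref{thm:bd2}: using the product structure near $\partial M$, the commutation relations (\ref{eq:comm1})--(\ref{eq:comm2}), and the $L^2$ Fourier expansion (\ref{eq:defp}) along the collar, one sees that a section of $S_+(\symup{T}N)\otimes L$ on $N$ in the $\mathbf{H}^1_{f,Th,+}$-domain is precisely a pair of sections, one on $M$ and one on $W$, whose traces at $\partial M$ agree, and that --- since $0\notin\symup{sp}(P_fD^L_{\partial M,T}P_f)$ --- the APS projection on the $M$-side and its complement on the $W$-side together span $\mathbf{H}^0_f(\partial M,(S_+(\symup{T}N)\otimes L)|_{\partial M})^G$; additivity of the index over this Fredholm decomposition of Cauchy data --- the $G$-invariant, $f$-weighted analogue of the splitting in \citep{BoossBavnbek:1993wb,Dai:1998ws} --- then yields the displayed identity. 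Next I would show that the $W$-term is zero: on $W$ one has $\|h\Phi^N\|\ge 1$ by (\ref{eq:hme2b}) and $\Phi^N$ is nowhere zero, so the Witten--Bochner estimate underlying (\ref{eq:hme3}), transplanted to the manifold-with-boundary $W$ together with the G\"arding inequality for $B_W$ --- whose boundary term has the favorable sign, exactly as in \textbf{Step 1} of Subsection \ref{sub:product_case} and Remark \ref{rk:gard} --- gives $\|P_fD^L_{W,Th,+}\phi\|_0^2\ge(cT^2-c')\|\phi\|_0^2$ for $\phi$ in the domain. This simultaneously makes the $W$-problem Fredholm (the deformed operator is invertible at the outer infinity of $W$, even though $W/G$ is not compact) and, for $T$ large, forces its kernel to vanish; applying the analogue of Theorem \ref{thm:bd2} on $W$ identifies the cokernel of $(P_fD^L_{W,Th,+},B_W)$ with the kernel of $(P_fD^L_{W,Th,-},B_W')$ for the adjoint condition $B_W'$, which the corresponding G\"arding inequality kills as well. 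Hence $\ind(P_fD^L_{W,Th,+},B_W)=0$ and the theorem follows.

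I expect the main obstacle to be the rigorous justification of the splitting formula in this setting. One must check that cutting along $\partial M$ is compatible with the projections $P_f$ --- which is exactly why the product-structure reduction (Corollary \ref{cor:prod}) and the commutation relations (\ref{eq:comm1})--(\ref{eq:comm2}) are indispensable --- that each of the boundary value problems on $M$ and on $W$ is genuinely Fredholm in the $f$-weighted $G$-invariant Sobolev spaces (for $M$ this is Theorem \ref{thm:main1}; for $W$ it is the invertibility at infinity above), and that the spectral projection of the \emph{same} deformed boundary operator $D^L_{\partial M,T}$ governs the APS condition on $M$ and its complement on $W$ consistently, including the correct $c(e_n)$-twists between the $\pm$ components. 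A secondary point needing care is the interplay of the two distinct ``$\mathbf{H}^1$'' norms on $M$ --- the intrinsic one of (\ref{eq:sn}) and the deformation norm (\ref{eq:h1n}) --- which coincide on APS-type subspaces precisely by virtue of the G\"arding inequality (\ref{eq:gard}).
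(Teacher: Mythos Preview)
Your strategy is sound and reaches the same conclusion, but it differs from the paper's route in one essential technical choice: you cut $N$ sharply along $\partial M$ into $M$ and $W=N\setminus\mathrm{int}(M)$ and invoke a two-term splitting formula for the index via matching of Cauchy data, whereas the paper uses Bunke's unitary conjugation trick with an \emph{overlap} cylinder. Concretely, the paper sets $V=(\partial M\times[0,1])\cup(N\setminus M)$ and $Z=\partial M\times[0,1]=M\cap V$, builds a unitary $U:\symbf{H}^0_f(N)\oplus\symbf{H}^0_f(Z)\to\symbf{H}^0_f(V)\oplus\symbf{H}^0_f(M)$ out of a partition $\xi^2+\varphi^2=1$ supported on $Z$, and observes that $U(P_fD^L_{N,Th,+}\oplus P_fD^L_{Z,T,+})U^*$ differs from $P_fD^L_{V,Th,+}\oplus P_fD^L_{M,T,+}$ by a zeroth order (hence compact) operator. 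This yields a four-term identity $\ind(N)+\ind(Z)=\ind(V)+\ind(M)$ and, crucially, gives the Fredholmness of the $V$-problem \emph{for free} from that of the $N$- and $Z$-problems, without having to establish a gluing theorem for Cauchy data in the $f$-weighted $G$-invariant Sobolev spaces. The paper then kills the $Z$-term (product cylinder, invertible boundary operator) and the $V$-term (same Bochner estimates you cite, patched together by the Bismut--Lebeau partition-of-unity argument) to conclude. Your approach buys conceptual cleanliness---no auxiliary cylinder $Z$---at the price of having to justify the splitting formula itself in this non-standard functional-analytic setting; you correctly flag this as the main obstacle, and it is precisely what the paper's overlap trick is designed to sidestep. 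The treatment of the complement's vanishing index (kernel and cokernel separately, via the same G\r{a}rding/Bochner estimates) is essentially identical in both approaches.
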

    
    As in \citep{Ma:2014ja}, we hope this theorem will be useful in studying the geometric quantization problem on non-compact manifolds.
    
    To make the proof of Theorem \ref{thm:fin} more transparent, we rely on a deformation argument. On one side, Corollary \ref{cor:prod} implies that deforming the geometric data near $\partial M$ to product ones will not change the APS type index. On the other hand, since the Kirwan vector field doesn't vanish near $\partial M$, the HM type index won't change during the deformation, either. In fact, in \citep{Braverman:2014uf}, the manifold after the deformation is thought to be cobordant to the original manifold. Hence, to prove Theorem \ref{thm:fin}, one can assume that $N$ has \uline{product structure on $\partial M \times [-1,1]$.}    
    \begin{remark}
        \label{rk:product}
        Admittedly, during the deformation from the general metric to the product one, the symplectic structure near $\partial M$ may be violated. But this is not a problem. In fact, the Hochs-Mathai type index can be defined for Spin$^c$ manifolds. We discuss this general case briefly in Subsection \ref{sub:ghm}.
    \end{remark}
    
    Theorem \ref{thm:fin} will be proved in the next subsection.    

\subsection{An equality about two indices} 
\label{sub:equind}
    
    To begin with, note that due to the $G$-equivariance of $\Phi$, one has the following commutation relation when acting on ${f} \Gamma({M},S(\symup{T}{M})\otimes L)^G$,
    \begin{equation}
        \label{eq:commf}
        [P_f,c(\Phi^M)]=0,
    \end{equation}
    by which, all results in Section \ref{sec:dirac} and Section \ref{sec:product} can be easily generalized to the deformed Dirac operator $D^L_{M,T}$.

    We start with an obvious extension of \citep[Proposition 1.1]{Ma:2014ja}, which corresponds to the $f\equiv 1$ case.
    \begin{lemma}
    	\label{lm:inv}
        If $T$ is sufficiently large, for any section $s \in \Gamma(\partial M, (S(\symup{T}M)\otimes L)|_{\partial M})^G$, the following estimate holds,
    	\begin{equation}
            \label{eq:inv0}
    		\|(P_{f}D^L_{\partial M,T})
            (f s)\|^2_0 \ge \|(P_{f}D^L_{\partial M})(f s)\|^2_0 + C T^2\|f s\|^2_0,
    	\end{equation}
    	for some positive constant $C$.
    \end{lemma}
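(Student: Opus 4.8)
The plan is to view Lemma~\ref{lm:inv} as the boundary counterpart of \citep[Proposition~1.1]{Ma:2014ja} with the cut-off function $f$ inserted, so that the only work beyond that of Ma and Zhang is to commute the projection $P_f$ past the Clifford deformation term. Since the proof of Theorem~\ref{thm:fin} has already been reduced to the case where $N$ has product structure on $\partial M\times[-1,1]$, the commutation relations of Subsection~\ref{sub:product_case} hold near the boundary: by \eqref{eq:comm1} and the $G$-equivariance of the Clifford action underlying \eqref{eq:commf}, $P_f$ commutes with $c(e_n)$ and with $c(\Phi^M)$, hence with $c(e_n)c(\Phi^M)$. First I would record that, since $P_f(fs)=fs$ for $G$-invariant $s$,
\[
    P_fD^L_{\partial M,T}(fs)=P_fD^L_{\partial M}(fs)-\sqrt{-1}\,T\,c(e_n)c(\Phi^M)(fs),
\]
so on $\symbf{H}^0_f(\partial M,(S(\symup{T}M)\otimes L)|_{\partial M})^G$ the operator $P_fD^L_{\partial M,T}$ is the formally self-adjoint operator $P_fD^L_{\partial M}-\sqrt{-1}Tc(e_n)c(\Phi^M)$.

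Next I would square this. Using that $c(e_n)c(\Phi^M)$ is skew-adjoint with $\langle c(e_n)c(\Phi^M)v,c(e_n)c(\Phi^M)v\rangle=|\Phi^M|^2\langle v,v\rangle$ pointwise, and commuting $P_f$ out, one obtains
\[
    \|P_fD^L_{\partial M,T}(fs)\|^2_0=\|P_fD^L_{\partial M}(fs)\|^2_0+T^2\!\int_{\partial M}\!|\Phi^M|^2\,\langle fs,fs\rangle\diff v_{\partial M}+T\,\big(W(fs),fs\big),
\]
where $W=-\sqrt{-1}\big(D^L_{\partial M}c(e_n)c(\Phi^M)+c(e_n)c(\Phi^M)D^L_{\partial M}\big)$ is a formally self-adjoint first order operator whose principal part is $-2\sqrt{-1}\nabla_{\Phi^M}$, the covariant derivative along the Kirwan field. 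Because $\partial M$ is cocompact and $\Phi^M$ is non-vanishing there, $|\Phi^M|\ge c_0>0$ on $\partial M$, so the middle term is bounded below by $c_0^2T^2\|fs\|^2_0$.

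The one delicate point, and the step I expect to be the main obstacle, is to absorb the mixed term $T(W(fs),fs)$: this is precisely the estimate carried out in the proof of \citep[Proposition~1.1]{Ma:2014ja}. Its zeroth order part has bounded coefficients on the cocompact manifold $\partial M$, while the principal part $-2\sqrt{-1}\nabla_{\Phi^M}$ is controlled using the Kostant formula \eqref{eq:kos} for the moment map, integration by parts on the closed manifold $\partial M$ (legitimate since $fs$ has cocompact support), and the G\"arding inequality \eqref{eq:lme1} applied to the self-adjoint operator $P_fD^L_{\partial M}$ on $\partial M$; for $T$ large enough the mixed term is then dominated, up to lower order, by $c_0^2T^2\|fs\|^2_0$, which yields \eqref{eq:inv0}. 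Because $P_f$ was moved to the outside in the first step, all of this is a word-for-word transcription of the argument of Ma and Zhang with $s$ replaced there by $fs$; the commutation of $P_f$ is the only genuinely new ingredient.
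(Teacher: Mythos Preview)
Your overall strategy---square $P_fD^L_{\partial M,T}$, isolate the $T^2|\Phi^M|^2$ term, and absorb the cross term---is exactly the paper's approach, and the commutation $[P_f,c(e_n)c(\Phi^M)]=0$ is indeed the only new ingredient beyond \citep[Proposition~1.1]{Ma:2014ja}. However, your description of how the cross term is handled does not match what actually happens in Ma--Zhang, and the alternative you sketch would not yield the stated inequality.

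The point of \citep[(1.15)--(1.16)]{Ma:2014ja}, which the paper invokes verbatim, is that on $G$-invariant sections the operator $\nabla_{\Phi^M}$ is \emph{zeroth order}: one writes $\nabla_{\Phi^M}=L_{\Phi^M}+(\nabla_{\Phi^M}-L_{\Phi^M})$, the Lie derivative kills $G$-invariant sections, and the difference $\nabla-L$ along a Killing field is a bounded endomorphism (the Kostant formula~\eqref{eq:kos} is the line-bundle piece of this). Consequently $P_f\nabla_{\Phi^M}$ is a bounded operator on $\symbf{H}^0_f(\partial M,\cdot)^G$, the entire cross term is of size $O(T)\|fs\|_0^2$, and for large $T$ it is swallowed by $c_0^2T^2\|fs\|_0^2$ without touching $\|P_fD^L_{\partial M}(fs)\|_0^2$ at all. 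No G\aa rding inequality and no integration by parts are used.

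By contrast, if one treats the principal part $-2\sqrt{-1}\nabla_{\Phi^M}$ as a genuine first-order operator and controls it via the G\aa rding inequality~\eqref{eq:lme1} as you propose, Young's inequality gives at best
\[
  T\big|(W(fs),fs)\big|\le \epsilon\,\|P_fD^L_{\partial M}(fs)\|_0^2 + C(\epsilon)\,T^2\|fs\|_0^2,
\]
which after subtraction leaves only $(1-\epsilon)\|P_fD^L_{\partial M}(fs)\|_0^2$ on the right-hand side. That is strictly weaker than~\eqref{eq:inv0}, which requires the coefficient~$1$. So your ``word-for-word transcription'' claim is right, but the words you supplied for the cross-term step are not the ones Ma--Zhang use; drop the G\aa rding detour and state directly that $P_f\nabla_{\Phi^M}$ is bounded on $\symbf{H}^0_f$. (A minor slip: $fs$ has \emph{compact} support, not cocompact.)
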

    \begin{proof}
        By (\ref{eq:pdd}), (\ref{eq:pmo}), (\ref{eq:commf}) and the compatibility of the Clifford connection, one has the following Bochner type formula like \citep[(1.14)]{Ma:2014ja},
    	\begin{multline}
			\label{eq:bo1}		
            (P_fD^{L}_{\partial M,T})^2  = (P_fD^{L}_{\partial M})^2 + \sqrt{-1}T\sum^{n}_{i=1}P_fc(e_i)c(\nabla_{e_i}{\Phi}^{M}) \\ -2\sqrt{-1}TP_f\nabla_{{\Phi}^{M}} + T^2\|{\Phi}^{M}\|^2.
    	\end{multline}
        
        Due to the $G$-equivariance of $\Phi^M$, as in \citep[(1.15)\&(1.16)]{Ma:2014ja}, when acting on $G$-invariant sections, $\nabla_{\Phi^M}$ is a bounded operator. So when acting on $\symbf{H}^0_f(\partial M, (S(\symup{T}M)\otimes L)|_{\partial M})^G$, $P_f\nabla_{{\Phi}^{M}}$ is also a bounded operator. Now using the fact that $\Phi^M$ does not vanish on $\partial M$, there exists $C> 0$ such that 
        \begin{equation}
            \label{eq:fm}
            \Vert \Phi^M \Vert^2 \ge C \quad \text{on } \partial M.
        \end{equation}
        By (\ref{eq:bo1}), (\ref{eq:fm}) and the boundedness of $P_f \nabla_{\Phi^M}$, (\ref{eq:inv0}) is proved.
    \end{proof}
    
    \begin{remark}
        \label{rk:inv}
        Lemma \ref{lm:inv} essentially claims that the boundary operator $P_{f}D^L_{\partial M,T}$ is invertible when $T$ is large enough. Also, we notice that the result of Lemma \ref{lm:inv} does not depend on whether $M$ has product structure near the boundary or not. Proceeding as \citep[Remark 1.4]{Ma:2014ja}, one can conclude that for large $T$, the APS type index of the deformed Dirac operator $(P_fD^L_{M,T,+},P_{\ge 0,f,T})$ is independent of the metric. Moreover, by (\ref{eq:sf}), when $T$ is large enough, the APS type index is also independent of the choices of the cut-off functions.
    \end{remark}
    
    Henceforth, we will assume that $T\gg 0$ is satisfied. To compare two indices defined on $M$ and $N$ respectively, we use a cut-and-paste method. Following \citet{Ma:2014ja}, the manifold $N$ is divided into several parts,
    \begin{equation*}	
    	\begin{gathered}
    		V=(\partial M \times [0,1])\cup (N\backslash M) \subseteq N,\; N= M \cup V,\\
    		Z = \partial M \times [0,1] = V \cap M \subseteq N.
    	\end{gathered}
    \end{equation*}
    Let $\xi$ and $\varphi$ lie in $\mathscr{C}^{\infty}([0,1])$ that satisfy
    \begin{gather*}
    	\xi|_{[0,1/4]}=1,\; 0\le \xi|_{[1/4,3/4]} \le 1,\; \xi|_{[3/4,1]} = 0, \\
    	\varphi = (1-\xi^2)^{1/2}.
    \end{gather*}
    Clearly, $\xi$ and $\varphi$ can be extended to $N$ naturally. Set\footnote{If there is no confusion, the subscript indicating the manifold on which the bundle is defined will be omitted.}
    \begin{equation*}
    	\begin{gathered}
    		H = \symbf{H}^0_{f}(N,S(\symup{T}N)\otimes {L})^G \oplus \symbf{H}^0_{f}(Z,S(\symup{T}Z)\otimes L)^G,\\
    		H' = \symbf{H}^0_{f}(V,S(\symup{T}V)\otimes {L})^G \oplus \symbf{H}^0_{f}(M,S(\symup{T}M)\otimes L)^G.
    	\end{gathered}
    \end{equation*}
    Define a bounded operator $U$ by,
    \begin{equation}
        \label{eq:paste}
        U(s_1,s_2)=(\xi s_1 - \varphi s_2, \varphi s_1 + \xi s_2):H \rightarrow H',
    \end{equation}
    whose adjoint operator $U^* :H'\rightarrow H$ is defined in the usual way. The following commutation relations are easy to check.
    \begin{equation}
    	\label{eq:comm4}
    	[P_{f},U]=0,\quad [P_{f},U^*]=0,
    \end{equation}
    from which one can verify that $U$ indeed maps from $H$ into $H'$, that is, $U$ is well defined. A nice property about $U$ and $U^*$ is that they are isometries, i.e. unitary operators, c.f. \citep{Bunke:1995tu}.
    
    Now we have several manifolds with boundary: $M$, $V$, $Z$. The boundary conditions on them are closely related. If $W$ is $M$, $V$ or $Z$, we define the following Cauchy data space on $W$,
    \begin{equation}
        \label{eq:wbd}
        \symbf{H}^1_{f,Th,+}(W,L;P^W_{\ge 0,f,T})^G=\{ \phi\in \symbf{H}^1_{f,Th,+}(W,L)^G| P^W_{\ge 0,f,T}(\phi|_{\partial W})=0 \},
    \end{equation}
    where $P^W_{\ge 0,f,T}$ is the spectral projection of boundary operator $P_{f}D_{\partial W,Th} = P_{f}D_{\partial W,T}$, considering $h|_M = 1$. For the same reason, if $W=M$ or $Z$, notations on $W$ can be simplified a little,
    \begin{equation*}
        \begin{gathered}
        D^L_{W,Th}=D^L_{W,T},\; \symbf{H}^1_{f,Th,\pm}(W,L;P^W_{\ge 0,f,T})^G = \symbf{H}^1_{f,T,\pm}(W,L;P^W_{\ge 0,f,T})^G.
        \end{gathered}       
    \end{equation*}
    Especially, by the comment after (\ref{eq:h1n}), for $W=M$ or $Z$,  the boundary condition in (\ref{eq:wbd}) is actually the APS boundary condition.
    
    As in \citep{Ma:2014ja}, our scheme is to relate operators defined on $M$ and $N$ by using $U$ and $U^*$. Due to the partition of $N$ we have used, the operators on $Z$ and $V$ enter in the calculation. Nevertheless, their contributions on the index, as we will prove, can be neglected. As a first step, we show that $P_{f}D^{{L}}_{V,Th}$ with boundary (\ref{eq:wbd}) is a Fredholm operator.
    
    By the definition of $U^*$, one can check that the boundary condition in (\ref{eq:wbd}) is preserved by $U^*$, that is, the following map is an isomorphism,
    \begin{multline}
        \label{eq:ustar}
        U^*: \symbf{H}^1_{f,Th,+}(V,{L}; P^{V}_{\ge 0,f, T})^G \oplus \symbf{H}^1_{f,T,+}(M,L;P^M_{\ge 0,f,T})^G\\  \longrightarrow \symbf{H}^1_{f,Th,+}(N,{L})^G \oplus \symbf{H}^1_{f,T,+}(Z,{L}; P^{Z}_{\ge 0,f,T})^G.
    \end{multline}    
As we have argued, the APS type index of $(P_{f}D^L_{Z,T,+},P^Z_{\ge 0,f,T})$ is finite. So is the HM type index of $P_fD^L_{N,Th,+}$. Consequently, by (\ref{eq:paste}), (\ref{eq:ustar}), we assert that the following operator is a Fredholm operator,
    \begin{multline}
        \label{eq:m2o}
        U(P_{f}D^{L}_{N,Th,+}\oplus P_{f}D^L_{Z,T,+})U^*:\symbf{H}^1_{f,T,+}(V,{L};P^{V}_{\ge 0,f,T})^G \oplus \symbf{H}^1_{f,T,+}(M,L;P^M_{\ge 0,f,T})^G\\
    	\longrightarrow \symbf{H}^0_{f}(V,S_-(\symup{T}V)\otimes {L})^G \oplus \symbf{H}^0_{f}(M,S_-(\symup{T}M)\otimes L)^G.
    \end{multline} 
    
    Comparing $U(P_{f}D^{L}_{N,Th,+}\oplus P_{f}D^L_{Z,T,+})U^*$ with $P_{f}D^{{L}}_{V,Th,+} \oplus P_{f}D^L_{M,T,+}$, one can expect that two operators have the same index. That is exactly the case. By (\ref{eq:comm4}) and the definition of $\xi$ and $\varphi$, the difference between the two operators is a zeroth order operator, which must be a compact operator by Rellich's lemma \ref{prop:rellich}. Thereupon, the following operator,
    \begin{equation}
        \label{eq:m2d}
        (P_{f}D^{L}_{V,Th,+},P^{V}_{\ge 0,f,T}): \symbf{H}^1_{f,Th,+}(V;P^{V}_{\ge 0,f,T})^G \rightarrow \symbf{H}^0_{f}(V,S_-(\symup{T}V)\otimes {L})^G,
    \end{equation}
    is a Fredholm operator. Moreover, one has the following index equality,
    \begin{multline}
    	\label{eq:ind}
    	\ind P_{f}D^{L}_{N,Th,+} + \ind (P_{f}D^L_{Z,T,+},P^Z_{\ge 0,f,T}) \\
    	= \ind (P_{f}D^{L}_{V,Th,+},P^{V}_{\ge 0,f,T}) + \ind( P_{f}D^L_{M,T,+},P^M_{\ge 0,f,T}),
    \end{multline}
    because the Fredholm index is insensitive to compact operators.
    
    By virtue of (\ref{eq:ind}), to prove Theorem \ref{thm:fin}, one should show that the indices contributed by $Z$ and $V$ are zero. Since $Z$ has product structure and the inward normal vectors have opposite directions at two boundary components of $Z$, the boundary operators of $P_fD^L_{Z,T,+}$ at two boundary components coincide up to a sign, i.e.,
    \begin{equation}
        \label{eq:bdz}
        P_fD^L_{\partial M\times \{0\},T,+} = -P_fD^L_{\partial M\times \{1\},T,+}.
    \end{equation}
As in (\ref{eq:defp}), one can write down the $L^2$ expansions of sections lying in the kernel of $(P_{f}D^L_{Z,T,+},P^Z_{\ge 0,f,T})$. By (\ref{eq:bdz}), if $\phi\in \ker (P_{f}D^L_{Z,T,+},P^Z_{\ge 0,f,T})$, the boundary condition requires that $\phi$ has the form of $\sum_{\lambda} \mathbb{C}\cdot \eta_{\lambda}$, where $\eta_{\lambda}\in \ker P_fD^L_{\partial M\times \{0\},T,+}$. As a result, Lemma \ref{lm:inv} implies that the kernel of $(P_{f}D^L_{Z,T,+},P^Z_{\ge 0,f,T})$ must be null, which means
    \begin{equation}
        \label{eq:zin}
        \ind (P_{f}D^L_{Z,T,+},P^Z_{\ge 0,f,T})=0.
    \end{equation}
    
    The proof of Theorem \ref{thm:fin} will be complete by using the following lemma about the index of $(P_{f}D^{{L}}_{V,Th,+},P^{V}_{\ge 0,f,T})$.
    \begin{lemma}
    	\label{lm:inv2}
    	For $T$ large enough, the index of operator $(P_{f}D^{{L}}_{V,Th,+},P^{V}_{\ge 0,f,T})$ vanishes.
    \end{lemma}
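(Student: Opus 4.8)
The plan is to show that, once $T$ is large, the operator $(P_{f}D^{L}_{V,Th,+},P^{V}_{\ge 0,f,T})$ has trivial kernel and trivial cokernel; since it is already known to be Fredholm (see (\ref{eq:m2d}) and the surrounding discussion), this forces its index to vanish. The geometric point that makes both triviality statements possible is that the rescaled Kirwan vector field is bounded away from zero on all of $V$. Indeed, all zeroes of $\Phi^{N}$ lie in the interior of $M$, so $\Phi^{N}$ is non-vanishing on $N\setminus M$; on the collar $Z=\partial M\times[0,1]\subseteq M$ one has $h\equiv 1$, and since $\Phi^{M}\neq 0$ on $\partial M$, after shrinking the collar $\|\Phi^{N}\|$ is bounded below there as well; finally, (\ref{eq:hme2b}) gives $\|h\Phi^{N}\|\ge 1$ near infinity. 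As $h$ is comparable to a constant on the cocompact ``middle'' part of $V$, patching these three regions yields a constant $c_{0}>0$ with $\|h\Phi^{N}\|^{2}\ge c_{0}$ everywhere on $V$.

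With this in hand, the first step is a Bochner--G\"arding type estimate for $P_{f}D^{L}_{V,Th,+}$ subject to the APS boundary condition $P^{V}_{\ge 0,f,T}(\phi|_{\partial V})=0$. Applying Green's formula on $V$, the interior term is treated by the $V$-version of the Hochs--Mathai inequalities (\ref{eq:hme2a})--(\ref{eq:hme3}), which absorb the cross terms involving $\nabla h$ and $\nabla\Phi^{N}$ into $T^{2}\|h\Phi^{N}\|^{2}\ge c_{0}T^{2}$ up to an error linear in $T$ and a zeroth order error; the boundary contribution, by the spectral decomposition of $P_{f}D^{L}_{\partial V,T,+}$ under the APS condition, enters with a favourable sign, exactly as in the passage from (\ref{eq:green}) to (\ref{eq:ds4}). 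This yields, for every $fs$ satisfying the boundary condition,
\[
\|P_{f}D^{L}_{V,Th,+}(fs)\|_{0}^{2}\ \ge\ \|fs\|_{1}^{2}+(c_{1}T^{2}-c_{2}T-c_{3})\,\|fs\|_{0}^{2},
\]
so for $T$ large the coefficient of $\|fs\|_{0}^{2}$ is positive and $\ker(P_{f}D^{L}_{V,Th,+},P^{V}_{\ge 0,f,T})=0$.

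For the cokernel, recall that after the reduction preceding Theorem \ref{thm:fin} we may assume $N$, hence $V$, has product structure near $\partial V$. The argument of Theorem \ref{thm:bd2} --- together with the routine extension of the results of Section \ref{sec:product} to the deformed operator noted after (\ref{eq:commf}), and the fact that $h\equiv 1$ near $\partial V$ --- then identifies the adjoint of $(P_{f}D^{L}_{V,Th,+},P^{V}_{\ge 0,f,T})$ with $P_{f}D^{L}_{V,Th,-}$ carrying the modified APS boundary condition $P^{V}_{<0,f,T}c(-e_{n})(\phi|_{\partial V})=0$. Repeating the estimate of the previous paragraph for $D^{L}_{V,Th,-}$ --- the boundary contribution again having a favourable sign under the modified condition, as in Remark \ref{rk:gard} --- shows that this adjoint has trivial kernel for $T$ large, i.e.\ the cokernel of $(P_{f}D^{L}_{V,Th,+},P^{V}_{\ge 0,f,T})$ vanishes. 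Hence $\ind(P_{f}D^{L}_{V,Th,+},P^{V}_{\ge 0,f,T})=0$, as claimed.

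The step I expect to be the main obstacle is the interior Bochner estimate on $V$. Unlike the cocompact situation of Section \ref{sec:product}, the auxiliary function $h$ grows rapidly near infinity, so the cross terms arising in the square of $P_{f}D^{L}_{V,Th}$ are not of lower order in any naive sense; they must be dominated by $T^{2}\|h\Phi^{N}\|^{2}$ using precisely the pointwise bounds (\ref{eq:hme2a})--(\ref{eq:hme2b}) borrowed from \citep{Hochs:2015iu}. Reconciling this estimate, which is genuinely non-compact at infinity, with the boundary-term bookkeeping at $\partial V$ --- where the APS projection $P^{V}_{\ge 0,f,T}$ enters, just as in Step 1 of the product case --- is the delicate point; the remaining ingredients (Green's formula, the spectral decomposition of the boundary operator, and the identification of the adjoint) are already available from the earlier sections.
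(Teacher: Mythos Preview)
Your kernel argument is close to the paper's, though organized differently: instead of a single Green's-formula computation on all of $V$, the paper uses a Bismut--Lebeau partition of unity subordinate to $U_1=N\setminus M$ and $U_2=\partial M\times(-1,1]$, invoking (\ref{eq:hme1'}) on $U_1$ and the cocompact estimate \citep[(1.52)]{Ma:2014ja} on $U_2$, then patching to obtain (\ref{eq:gd}). Note that (\ref{eq:gd}) controls $\|P_fD^L_V(fs)\|_0^2$ rather than your $\|fs\|_1^2$; on the non-cocompact $V$ a uniform bound of the latter form is not obviously available, though for the purpose of killing the kernel only the $T^2\|fs\|_0^2$ term matters, so this is cosmetic.

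The genuine divergence is in the cokernel step. The paper does \emph{not} identify the adjoint of the $V$-problem. Instead it repeats the kernel-vanishing argument for $(P_fD^L_{V,Th,-},P^V_{<0,f,T})$, so that both the $+$ and the $-$ indices on $V$ are $\le 0$, and then uses the cut-and-paste index equalities (\ref{eq:ei}), (\ref{eq:ei2}) together with the adjoint relations (\ref{eq:ei3}) on $N$ and on the cocompact $M$ --- where those relations are already secured --- to force both $V$-indices to zero. Your route instead invokes Theorem~\ref{thm:bd2} on $V$, but that theorem and its interior-regularity input (Appendix~\ref{sec:simple}) are proved only for cocompact manifolds; extending the adjoint identification across the non-cocompact end of $V$ would require a minimal/maximal domain coincidence for the Hochs--Mathai operator there, which is not the ``routine extension of Section~\ref{sec:product}'' you claim. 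The paper's indirect argument sidesteps this issue entirely, at the modest cost of routing through the global index equalities on $N$ and $M$.
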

    
    \begin{proof}
        Like \citep[Lemma 1.6]{Ma:2014ja}, one uses the partition of unity technique of \citet{Bismut:1991ve}, estimating $(P_fD^{L}_{V,Th})^2$ on the following two sets separately,
        \begin{equation*}
            U_1 = N\backslash M,\;U_2 = \partial M \times (-1,1],\;V = U_1\cup U_2.
        \end{equation*}
        
        On the open set $U_1$, by (\ref{eq:hme1'}), for any $s\in \Gamma_{tc}(U_1,S(\symup{T}U_1)\otimes L)^G$ and $T$ large enough,
        \begin{equation}
            \label{eq:u1}
            \Vert P_fD^L_{V,Th}(fs) \Vert^2_0 \ge C_0 \Vert P_fD^L_{V}(fs) \Vert^2_0 + C_1 T^2 \Vert fs \Vert^2_0,
        \end{equation}
         where $C_0,C_1$ are positive constants.
         
         On $U_2$, using \citep[(1.52)]{Ma:2014ja}, for any $s \in \Gamma_{tc}(U_2,S(\symup{T}U_2)\otimes {L})^G$ satisfying boundary condition (\ref{eq:m2d}) and $T$ large enough, one has,
         \begin{equation}
             \label{eq:mze1}
             \|D^L_{V,Th}(fs)\|^2_0 \ge C_2 \|D^L_{V}(fs)\|^2_0 + C_3 T^2 \|fs\|^2_0,
         \end{equation}
         where $C_2$, $C_3$ are positive constants. Notice that compared to \citep[(1.52)]{Ma:2014ja}, we have a cut-off function $f$ in (\ref{eq:mze1}). But it makes no difference because $U_2/G$ is precompact and $f$ has compact support on each orbit of $G$-action. Then by Proposition \ref{prop:tb} and (\ref{eq:mze1}),
         \begin{multline}
             \label{eq:u22}
             \|P_{f}D^L_{V,Th}(fs)\|^2_0 
             \ge \frac{1}{2} \Vert D^L_{V,Th}(fs) \Vert^2_0 - C_4 \Vert fs \Vert^2_0
             \ge  \frac{C_2}{2} \|D^L_{V}(fs)\|^2_0 \\+ (\frac{C_3}{2} T^2 -C_4) \|fs\|^2_0 
             \ge  \frac{C_2}{2} \|P_f D^L_{V}(fs)\|^2_0 + (\frac{C_3}{2} T^2 -C_4) \|fs\|^2_0,
         \end{multline}
         where $C_4$ is a positive constant.
         
         Now choose $h_1,h_2$ to be two smooth $G$-invariant functions on $V$ such that $h^2_1$ and $h^2_2$ form a partition of unity associated with the covering $\{U_1,U_2\}$. Then for any $\sigma\in\Gamma(V, S(\symup{T}V)\otimes L)$, one has $P_fh_j \sigma - h_j P_f \sigma =0$, $j=1,2$. With this fact and using (\ref{eq:u1}) and (\ref{eq:u22}), by an almost word-by-word translation of the proof of \citep[(1.53)-(1.55)]{Ma:2014ja}, one has the following inequality for $(P_{f}D^{{L}}_{V,Th,+},P^{V}_{\ge 0,f,T})$. Let $s$ lie in $\Gamma_{tc}(V,S(\symup{T}V)\otimes L)^G$ and satisfy boundary condition (\ref{eq:m2d}), for $T$ large enough,
         \begin{equation}
             \label{eq:gd}
             \|P_{f}D^L_{V,Th}(fs)\|^2_0 \ge C_5 \|P_fD^L_{V}(fs)\|^2_0 + C_6 T^2 \|fs\|^2_0,
         \end{equation}
         where $C_5$, $C_6$ are positive constants.
         
         By (\ref{eq:gd}), the kernel of $(P_{f}D^L_{V,Th,+},P^{V}_{\ge 0,f,T})$ must be null. Together with (\ref{eq:ind}), one has,
    	\begin{equation}
            \label{eq:ei}	
    		\ind P_{f}D^L_{N,Th,+} -\ind (P_{f}D^L_{M,T,+},P^M_{\ge 0,f,T}) = \ind (P_{f}D^L_{V,Th,+},P^{V}_{\ge 0,f,T}) \le 0.
    	\end{equation}
        
        Likewise for the operator $(P_{f}D^L_{V,Th,-},P^{V}_{< 0,f,T})$, one can prove a formula similar to (\ref{eq:gd}), whose kernel, as a result, also vanishes. Clearly, with such boundary condition, an index equality like (\ref{eq:ind}) also holds. Again, the index on $Z$ vanishes, which implies,
    	\begin{equation} 
            \label{eq:ei2}
            \ind P_{f}D^L_{N,Th,-} -\ind (P_{f}D^L_{M,T,-},P^M_{< 0,f,T}) = \ind (P_{f}D^L_{V,Th,-},P^{V}_{< 0,f,T}) \le 0.
    	\end{equation}
    	On the other hand, the following elementary equalities of indices hold trivially,
        \begin{equation}
            \label{eq:ei3}
            \begin{aligned}
                \ind P_{f}D^L_{N,Th,+}& =-\ind P_{f}D^L_{N,Th,-},\\
                \ind (P_{f}D^L_{M,T,+},P^M_{\ge 0,f,T})& =-\ind (P_{f}D^L_{M,T,-},P^M_{< 0,f,T}).
            \end{aligned}
        \end{equation}
        By (\ref{eq:ei}), (\ref{eq:ei2}), (\ref{eq:ei3}), we have the claimed result,
        \begin{equation*}
            \label{eq:m2in}
            \ind (P_{f}D^L_{V,Th,+},P^{V}_{\ge 0,f,T}) = 0.
        \end{equation*}
    \end{proof}
    
    By (\ref{eq:ind}), (\ref{eq:zin}) and Lemma \ref{lm:inv2}, the proof of Theorem \ref{thm:fin} has finished.
    
\subsection{A generalization of the Hochs-Mathai type index} 
\label{sub:ghm}

    Checking the definition of the Hochs-Mathai type index, if one uses a prescribed equivariant map from $M$ to $\mathfrak{g}$ to replace metric dual of the momentum map, one can define a similar index for the manifold equipped with a Clifford bundle, which includes the Spin$^c$ manifold. In fact, this has been investigated by \citet{Braverman:2002wy,Braverman:2014uf}. All the arguments we have used to prove Theorem \ref{thm:fin} can be rewritten for this general situation. Since the Hochs-Mathai-Braverman type index is independent of $T$, one has the following easy corollary of Theorem \ref{thm:fin}, which can also be proved by a spectral flow argument.
    \begin{corollary}
        \label{cor:inv}
        For $T$ large enough, the APS type index of $(P_{f}D_{M,T,+},P_{\ge 0,f,T})$ is independent of $T$.
    \end{corollary}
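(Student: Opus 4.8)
The plan is to read the corollary off the Clifford-module version of Theorem~\ref{thm:fin} described just above. First I would fix an equivariant map $v\colon M\to\mathfrak g$ playing the role of the metric dual $\Phi$ of the momentum map, subject to the two hypotheses that make the cut-and-paste of Subsections~\ref{sub:hmind}--\ref{sub:equind} go through: that $M$ contains the zero set of the induced vector field $v^M$, and that $v^M|_{\partial M}$ is nowhere zero. Granting this, every estimate of those subsections---in particular the Bochner identity~(\ref{eq:bo1}), the invertibility estimate~(\ref{eq:inv0}) (which only uses $\Vert v^M\Vert^2\ge C$ on $\partial M$, the analogue of~(\ref{eq:fm})), the partition-of-unity estimates~(\ref{eq:u1}), (\ref{eq:u22}), (\ref{eq:gd}), and the homotopy argument leading to~(\ref{eq:hme1'})---depends only on the Clifford data and on $v^M$, not on the symplectic origin of the connection. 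One therefore obtains a threshold $T_0$ such that for all $T\ge T_0$
\[
    \ind P_f D_{N,Th,+}=\ind(P_f D_{M,T,+},P_{\ge 0,f,T}).
\]

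Next I would observe that the left-hand side is already known to be independent of $T$: once $T$ is large enough for the analogue of~(\ref{eq:hme1'}) to hold, a linear homotopy in the deformation parameter connects any two admissible values while keeping the operator Fredholm, so the index is constant. This is precisely the $T$-independence of the Hochs-Mathai-Braverman index, c.f.\ \citep[Proposition~4.7]{Hochs:2015iu} together with \citep{Braverman:2002wy,Braverman:2014uf}. Combining the two observations, for every $T\ge T_0$ the APS type index $\ind(P_f D_{M,T,+},P_{\ge 0,f,T})$ equals a fixed integer, which is exactly the claim.

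An independent route---the one signalled by the phrase ``a spectral flow argument''---runs as follows. Enlarging $T_0$ if necessary so that Lemma~\ref{lm:inv} applies for all $T\ge T_0$, the estimate~(\ref{eq:inv0}) shows that $P_f D_{\partial M,T}$ is invertible, hence $0$ is not an eigenvalue of $P_f D_{\partial M,T,+}$, throughout $[T_0,\infty)$. On the fixed Hilbert space $\symbf{H}^0_f(\partial M,(S_+(\symup{T}M)\otimes E)|_{\partial M})^G$ the family $\{P_f D_{\partial M,T,+}\}_{T\ge T_0}$ is a continuous family of closed self-adjoint operators, so by Kato's continuity result, c.f.\ \citep[Ch.~IV, \S3.5]{Kato:1991wl}, its eigenvalues move continuously in $T$; since none of them meets $0$, the spectral flow along any subinterval vanishes. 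The index formula of \citep{Dai:1998ws,Dai:2000vf} used to derive~(\ref{eq:sf}) then identifies the difference of APS type indices at two parameter values with this spectral flow, which is $0$.

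The main obstacle is not analytic but bookkeeping: one must make sure that the generalization of Theorem~\ref{thm:fin} to Clifford modules is genuinely harmless, i.e.\ that the two hypotheses on $v^M$ above are precisely what is needed for Lemma~\ref{lm:inv} (through the nonvanishing of $v^M$ on $\partial M$) and for the deformation of $N$ to a cobordant manifold with product structure on $\partial M\times[-1,1]$ to remain valid in the absence of a symplectic form. Once this is checked---and, as noted above, the rewriting of the argument is verbatim---the corollary is immediate, with the spectral flow argument providing an entirely self-contained alternative that does not even invoke Theorem~\ref{thm:fin}.
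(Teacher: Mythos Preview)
Your proposal is correct and follows essentially the same approach as the paper: the paper derives the corollary from the $T$-independence of the Hochs--Mathai--Braverman index together with the Clifford-module version of Theorem~\ref{thm:fin}, and explicitly notes the spectral flow argument as an alternative route. You have simply unpacked both of these in more detail than the paper's one-line justification.
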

        
    As in the HM type index case, this APS type index gives an alternate interpretation of the Braverman index in \citep{Braverman:2014uf}.



\appendix
\def\appendixname{}
\renewcommand*\appendixpagename{\normalsize Appendices}
\appendixpage

\section{Proof of Lemma \ref{lemma:reg}} 
\label{sec:simple}
Essentially, arguments of the same spirit used in this appendix have appeared in \citep{Tang:2013dq}. We collect it here for the sake of convenience. At first, we mention a simple variant of Rellich's lemma.
	\begin{lemma}
		\label{prop:rellich}		
        Let $M$ be a manifold with boundary. The natural embedding,
        \begin{equation*}
            i:\symbf{H}^1_f(M,S(\symup{T}M)\otimes E)^G \longrightarrow \symbf{H}^0_f(M,S(\symup{T}M)\otimes E)^G,
        \end{equation*}
        is compact.
	\end{lemma}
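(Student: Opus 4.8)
The statement is a Rellich-type compactness result for the cut-off Sobolev spaces $\symbf{H}^k_f$ on a manifold with boundary whose $G$-action is proper and cocompact. The plan is to reduce it to the classical Rellich embedding theorem on a suitable compact manifold-with-boundary by exploiting the cocompactness of the action together with the two-sided estimate (\ref{eq:fest}). First I would recall from Section~\ref{sec:aps} the compact set $Y$ with $G(Y)=M$ and the precompact open sets $U,U'$ with $Y\subseteq U\subseteq\overline U\subseteq U'$ and $f|_U=1$, $\supp(f)\subseteq U'$. I would then fix a compact manifold-with-boundary $K$ with $\overline{U'}\subseteq \mathring K\subseteq K\subseteq M$ (possible since $\overline{U'}$ is compact), so that the classical Rellich theorem applies to the bundle $(S(\symup{T}M)\otimes E)|_K$: the embedding $\symbf H^1(K,S(\symup{T}M)\otimes E)\hookrightarrow \symbf H^0(K,S(\symup{T}M)\otimes E)$ is compact.

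The next step is to set up the comparison maps. Given $s\in\Gamma(M,S(\symup{T}M)\otimes E)^G$, the section $fs$ is supported in $U'\subseteq K$, so restriction gives a linear map $r\colon f\Gamma(M,S(\symup{T}M)\otimes E)^G\to \Gamma(K,S(\symup{T}M)\otimes E)$, $r(fs)=(fs)|_K$, which is bounded for the $\symbf H^k$-norms (indeed norm-preserving, since $fs$ vanishes outside $K$). Conversely, the key point is that an invariant section is determined by its values on (a neighborhood of) $Y$: since $f|_U=1$ and $U\supseteq Y$, the section $fs$ on $U$ equals $s$ on $U$, and by $G$-invariance $s$ on $U$ determines $s$ on all of $M=G(Y)$, hence determines $fs$ on $M$. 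This gives the inverse correspondence, and (\ref{eq:fest}) — together with its obvious higher-order analogue $\lVert s\rVert_{U,k}\le C\lVert fs\rVert_k\le\lVert s\rVert_{U',k}\le C'\lVert s\rVert_{U,k}$, which follows by the same averaging argument applied to $\nabla^i$ — shows that the $\symbf H^k_f$-norm of $fs$ is equivalent to the $\symbf H^k(U',\cdot)$-norm, and a fortiori controlled above and below by a $\symbf H^k$-norm over the relatively compact region.

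Then I would factor the embedding $i\colon\symbf H^1_f\to\symbf H^0_f$ through the compact manifold: composing $r$ (bounded, $\symbf H^1_f\to\symbf H^1(K)$), the classical compact Rellich embedding $\symbf H^1(K)\hookrightarrow\symbf H^0(K)$, and the bounded reconstruction map $\symbf H^0(K)\to\symbf H^0_f$ (bounded because the $\symbf H^0_f$-norm of $fs$ is dominated by the $L^2$-norm of $s$ over $U'$, by (\ref{eq:fest})). A composition of bounded operators with a compact one is compact, giving the claim. The main obstacle — really the only subtlety — is to check carefully that the reconstruction map is well defined and bounded: one must verify that passing from a restriction on $K\supseteq\overline{U'}$ back to the invariant section on all of $M$ does not inflate the $L^2$ norm, which is exactly the content of the lower and upper bounds in (\ref{eq:fest}) and of the cocompactness ensuring $M$ is saturated by $Y\subseteq U$. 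Once that bookkeeping is in place the argument is a routine factorization through a classical theorem, so I would not expect to need any genuinely new estimate beyond (\ref{eq:fest}) and its straightforward $k\ge1$ analogue.
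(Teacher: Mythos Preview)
Your proposal is correct and takes essentially the same approach as the paper: reduce to the classical Rellich lemma using that every element of $\symbf{H}^k_f$ is supported in the fixed compact set $\overline{U'}\supseteq\supp(f)$. The paper's proof is a single sentence to this effect; your factorization through a compact $K$ and the ``reconstruction map'' are unnecessary elaborations, since one works directly with $fs$ (not with $s$), and for $fs$ the $\symbf{H}^k$-norms on $M$ and on $K$ coincide by support considerations, so no appeal to (\ref{eq:fest}) or its higher-order analogue is needed.
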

	\begin{proof}
		Since $f$ has compact support, the lemma is just a rephrase of usual Rellich's lemma in our language. 
	\end{proof}
	
    From now on, all notations and assumptions are the same as in Subsection \ref{sub:empty}. Especially, $M$ is a manifold without boundary. Assume $\phi\in \symbf{H}^0_f(M,S(\symup{T}M)\otimes E)^G$ is in the cokernel of $P_fD$, that is, for any $s\in \Gamma(M,S(\symup{T}M)\otimes E)^G$,
	\begin{equation}
		\label{eq:lme2}
        ( P_fD (fs), \phi) = 0.
	\end{equation}
	Let $K_f$ be defined by,
	\begin{equation}
        \label{eq:defk}
		K_f = \frac{1}{A^2(x)} \int_G \delta(g) f(gx) c(df)(gx) dg.
	\end{equation}
	By (\ref{eq:proj}), and the $G$-invariance of $s$, one has
	\begin{equation}
        \label{eq:proje}
		P_fD(fs) = f(D+K_f)s.
	\end{equation}
	 
     By the definition of $\symbf{H}^0_f(M,S(\symup{T}M)\otimes E)^G$, there exists a sequence $\{s_n\}\subseteq \Gamma(M,S(\symup{T}M)\allowbreak\otimes E)^G$ such that $\lim_{n\rightarrow \infty}\Vert fs_n - \phi \Vert_0=0$. Therefore, using the notation in (\ref{eq:setr}), one has $\lim_{n\rightarrow \infty}\Vert s_n|_U - \phi|_U \Vert_{U,0}=0$. Since $G(U)=M$, one can extend $\phi|_U$ in a $G$-invariant way to a locally integrable section $s_0\in \symbf{H}^0_{loc}(M,S(\symup{T}M)\otimes E)^G$ and $fs_0 = \phi$. Also, introduce a $G$-invariant function, $\bar{f}(x)=\int_G f^2(gx) \diff g$. By the definition of $f$, $\bar{f}(x)>0$ for any point $x\in M$. Using (\ref{eq:proje}) and $s_0$, (\ref{eq:lme2}) is equivalent to
	\begin{equation}
		\label{eq:objs0}
         \big((D+K_f)s, f \phi\big) = \big((D+K_f)s, f^2 s_0\big) = 0.
	\end{equation}
    
     Now we prove the following regularity result, which implies the smoothness of $\phi$.
    \begin{proposition}
        \label{prop:regv}
        Suppose that for any section $s \in \Gamma(M,S(\symup{T}M)\otimes E)^G$, \emph{(\ref{eq:objs0})} holds. Then $s_0$ must be a smooth $G$-invariant section.
    \end{proposition}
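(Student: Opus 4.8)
The plan is to reduce the hypothesis (\ref{eq:objs0}), which a priori only tests $s_0$ against $G$-invariant sections, to an honest weak elliptic equation for $s_0$ on an open subset of $M$, and then to appeal to elliptic regularity. Throughout, recall that $\phi=fs_0$ and $f^2s_0=f\phi$ have compact support because $f$ does, so all of the $L^2$ pairings below make sense even though $s_0$ is only locally $L^2$.

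The first ingredient is the algebraic identity behind (\ref{eq:proje}): since $P_f$ is given by the integral formula (\ref{eq:proj}) and $D(fs)=fDs+c(df)s$, the identity $P_fD(fs)=f(D+K_f)s$ holds verbatim with $s_0$ in place of $s$, all terms understood distributionally; in particular $P_f(D\phi)=P_fD(fs_0)=f(D+K_f)s_0$ as distributional sections. Observe also that $D+K_f$ has the same principal symbol as $D$, hence is elliptic, and that its coefficients are smooth on the open set $\Omega=\{f>0\}$, because $K_f$ is smooth by (\ref{eq:proj}) and (\ref{eq:defk}).

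Next I would upgrade the hypothesis to a statement about \emph{all} compactly supported test sections. Given an arbitrary smooth section $\psi$ of $S(\symup{T}M)\otimes E$ with compact support, formula (\ref{eq:proj}) exhibits $P_f\psi$ in the form $fs_\psi$ with $s_\psi\in\Gamma(M,S(\symup{T}M)\otimes E)^G$, which is smooth because $P_f$ preserves smoothness. Taking $s=s_\psi$ in (\ref{eq:objs0}) and using (\ref{eq:proje}), the self-adjointness of $P_f$, the identity $P_f\phi=\phi$, and the formal self-adjointness of $D$ (there is no boundary term since $\partial M=\emptyset$), one obtains $0=((D+K_f)s_\psi,f^2s_0)=(P_fD(P_f\psi),\phi)=(D(P_f\psi),\phi)=(P_f\psi,D\phi)$, the last pairing being the distributional one. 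Since $\psi$ was arbitrary, this says exactly that the distributional section $P_f(D\phi)$ vanishes; combined with the previous paragraph, $f(D+K_f)s_0=0$, and dividing by the nowhere vanishing smooth function $f$ on $\Omega$ yields $(D+K_f)s_0=0$ on $\Omega$.

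Finally, $s_0$ is a locally $L^2$ distributional solution on $\Omega$ of the first order elliptic equation $(D+K_f)s_0=0$ with smooth coefficients, so elliptic regularity forces $s_0\in\mathscr{C}^\infty(\Omega)$. Since $U\subseteq\{f=1\}\subseteq\Omega$ and $G(U)=M$ by cocompactness, the $G$-invariance of $s_0$ propagates the smoothness from $U$ to all of $M$, which is the claim. I expect the only delicate point to be the distributional bookkeeping in the third paragraph — checking that (\ref{eq:proje}) and the adjoint relations remain valid when $s_0$ is merely locally $L^2$ — but each of these is routine once one uses the compact support of $\phi$ and $f^2s_0$ and the fact that $P_f$ has a regular integral kernel.
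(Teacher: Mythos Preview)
Your argument is correct and takes a genuinely different route from the paper's. You work globally on $M$: after writing $P_f\psi=fs_\psi$ for arbitrary compactly supported $\psi$, you unwind the hypothesis into $(P_f\psi,D\phi)=0$, and then, using that the integral formula (\ref{eq:proj}) extends $P_f$ continuously to distributions and agrees there with the dual extension, conclude that $P_f(D\phi)=f(D+K_f)s_0$ vanishes as a distribution. This yields a genuine weak elliptic equation $(D+K_f)s_0=0$ on the open set $\{f>0\}$, so elliptic regularity for the full Dirac-type operator on $M$ applies, and $G$-invariance plus $G(U)=M$ propagates smoothness everywhere.

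The paper instead localizes orbit by orbit via the slice theorem: a $G$-invariant tube around an orbit is identified with $V_0\times_{G_0}G$, and the pairing $((D+K_f)s,f^2s_0)$ is rewritten as an integral over the transversal slice $V_0$ against $\bar f s_0$, where $\bar f(x)=\int_G f^2(gx)\,dg>0$. Since $G$-invariant test sections restrict to arbitrary sections on $V_0$, this becomes a weak elliptic equation on the slice, and regularity there gives smoothness of $\bar f s_0$, hence of $s_0$.

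Your approach avoids the slice theorem and the passage to the quotient direction entirely, at the cost of the distributional bookkeeping you flag---namely that (\ref{eq:proje}) and the self-adjointness of $P_f$ persist when $s_0$ is only locally $L^2$; this is indeed routine once one observes that the kernel of $P_f$ is smooth and the $G$-integral is over a compact set by properness. The paper's approach is more geometric and makes explicit that only transversal ellipticity is at stake; yours is more direct and stays on $M$ throughout.
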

    
    \begin{proof}
        We assume that $\supp(s)$ is contained in a neighborhood $V$ of a certain orbit of the $G$-action. By \citep[Theorem I.2.1]{Audin:2000we}, $V$ is equivariantly diffeomorphic to $V_0 \times_{G_0} G$, where $V_0$ is an open subset of a Euclidean space and $G_0$ is a compact subgroup of $G$. As an example, we show the proposition with the assumption that $G_0$ is trivial. The general case is proved in the same way due to the compactness of $G_0$. Now by the $G$-invariance of measure $\diff v_M$ and the uniqueness of the Haar measure on $G$, one can find a smooth measure $\diff \mu$ on $V_0$ such that one has the following equality for $\bar{x}\in V_0$ to be the image of $x\in V$,
        \begin{multline}
            \label{eq:regt}
            \big((D+K_f)s, f^2 s_0\big) = \int_{V_0} \langle (D+K_f)s, s_0 \rangle(\bar{x}) \diff \mu(\bar{x}) \int_G f^2(gx) \diff g \\
            = \int_{V_0} \langle (D+K_f)s, \bar{f}s_0 \rangle(\bar{x}) \diff \mu(\bar{x}).
        \end{multline}
        By (\ref{eq:regt}) and the elliptic regularity, we know that $\bar{f}s_0$ is smooth. Considering the positivity of $\bar{f}$, the smoothness of $s_0$ is proved.
    \end{proof}
    
    At last, we comment that, using the same method, one can prove the following result which has been used in the proof of Theorem \ref{thm:bd2}. If there exists some $\sigma \in \symbf{H}^0_f(M,S(\symup{T}M)\otimes E)^G$ such that for any $s\in \Gamma(M,S(\symup{T}M)\otimes E)^G$, the equality $(P_fD(fs),\phi)=(fs,\sigma)$ holds, then one has $\phi \in \symbf{H}^1_f(M,S(\symup{T}M)\otimes E)^G$.
    

\section{Proof of Lemma \ref{lemma:garding2}} 
\label{sec:garding2}

Considering the fact that the G\"arding type inequality basically characterizes some equivalent norms on certain subspace of Sobolev spaces, we must be very careful about which norm we are talking about. Hence, to avoid any possible ambiguity, we write down the $\symbf{H}^2$-norm under consideration explicitly,
	\begin{equation}
        \label{eq:h2norm}
		\|s\|^2_2 = \int_M \left\langle \nabla^2 s, \nabla^2 s \right\rangle 
\diff v_M + \|s\|^2_1,
	\end{equation} 
	where $(\nabla^2 s)(X,Y)=\nabla_X \nabla_Y s-\nabla_{\nabla^{\symup{T}M}_X Y} s$ for any vector fields $X$, $Y$. In the following, constants appearing in inequalities are always positive but may vary from line to line.
    
    By Proposition \ref{prop:tb2}, one can show that for any section $s\in \Gamma(M,S(\symup{T}M)\otimes E)^G$,
    \begin{multline}
        \label{eq:e1}
        \|(P_fD)^2(fs)\|^2_0 = \|D^2(fs)-(1-P_f)D^2(fs)+((P_fD)^2-P_fD^2)(fs)\|^2_0 \\ \ge \frac{1}{2}\|D^2(fs)\|^2_0 - C_1\|fs\|^2_1 - C_2\|fs\|^2_0.
    \end{multline}
    Note that by the proof of Proposition \ref{prop:tb}, $(1-P_f)D^2(fs)$ is a first order operator whose coefficients have compact support. That explains the appearance of $\symbf{H}^1$-norm in (\ref{eq:e1}).
	
    Now we use a trick due to \citet[pp.~115-117]{Bismut:1991ve} to split the estimate of $\|D^2(fs)\|_0^2$ into two parts: (i) $\supp (s)$ lies in the interior of $M$; (ii) $\supp (s)$ lies near the boundary of $M$.
    
When $\supp (s)$ lies in the interior of $M$, by standard elliptic estimates, one has
\begin{equation}
    \label{eq:estint}
    \|D^2(fs)\|^2_0 \ge C_1 \Vert fs \Vert^2_2 - C_2 \Vert fs \Vert^2_0.
\end{equation}

For the other part, we suppose $\supp (s) \subset \partial M \times [0,1)$. Recall that we have assumed that $M$ has product structure near the boundary and $(y,u)$ is the trivial coordinate on $\partial M \times [0,1]$, where $u$ is the coordinate on the normal direction. Around point $p\in \partial M$, let $\{e_i\}^{n-1}_{i=1}$ be a set of locally oriented orthonormal frame of $\symup{T} \partial M$ satisfying $(\nabla^{\symup{T} \partial M}e_i)(p)=0 $. Together with $e_n = \partial/ \partial u$, $\{e_i\}^{n}_{i=1}$ is a set of locally oriented orthonormal frame of $\symup{T}M$ around $p$ satisfying $(\nabla^{\symup{T}M}e_i)(p)=0 $. Using such local frame, one can easily show the following relation between the Bochner Laplacians on $M$ and $\partial M$,
\begin{equation}
\label{eq:eqlap}
        \Delta = \Delta_{\partial M} + \nabla_{e_n}^2 = \Delta_{\partial M} + \partial_u^2,
\end{equation}
where $\partial_u$ is the derivative for the normal direction and $\nabla_{e_n} = \partial_u$ by the product structure assumption.

To keep symbols short, we will use $B$ to represent the boundary operator $D_{\partial M}$ in the following. On $\partial M \times [0,1]$, the Dirac operator takes the form: $D=c(e_n)(\partial_u + B)$. As a result, by (\ref{eq:comm2}), one has
\begin{equation}
    \label{eq:expan}
    \Vert D^2(fs) \Vert^2_0 = \Vert (-\partial^2_u + B^2)(fs) \Vert^2_0 
    = \Vert \partial^2_u (fs) \Vert^2_0 + \Vert B^2(fs) \Vert^2_0 -  2\symup{Re}\big(\partial^2_u (fs), B^2(fs)\big).
\end{equation}

We use a subscript to distinguish the submanifolds on which the Sobolev norms and inner product are calculated. By the standard elliptic estimate, one has
\begin{equation}
    \label{eq:e1b}
    \|B^2(fs)\|^2_{0,\partial M \times \{u\}}
    \ge C_3\Vert fs \Vert^2_{2,\partial M \times \{u\}} - C_4 \Vert fs \Vert^2_{0,\partial M \times \{u\}}.
\end{equation}

By the Lichnerowicz formula on the boundary, one has
\begin{multline}
    \label{eq:lic}
    -2\symup{Re}\big(\partial^2_u (fs), B^2(fs)\big) = 2\symup{Re}\big(\partial^2_u (fs), (\Delta_{\partial M}+ \mathcal{O}(1))(fs)\big) \\
    \ge 2\symup{Re}\big(\partial^2_u (fs), \Delta_{\partial M}(fs)\big) - \epsilon_1 \Vert fs \Vert^2_2 - C(\epsilon_1) \Vert fs \Vert^2_0,
\end{multline}
in which the second inequality we use mean value inequality. Using Green's formula,
\begin{equation}
    \label{eq:gre}
    \big(\partial^2_u (fs), \Delta_{\partial M}(fs)\big) = \big(\partial_u (fs), -\Delta_{\partial M} (fs) \big)_{\partial M \times \{0\}} + \big(\partial_u (fs), -\partial_u \Delta_{\partial M}(fs)\big).
\end{equation}
Using the Lichnerowicz formula for $B$ again to substitute $-\Delta_{\partial M}$ by $B^2$ and combining with Proposition \ref{prop:tb2},
\begin{multline}
    \label{eq:b}
    \big(\partial_u (fs), -\Delta_{\partial M} (fs) \big)_{\partial M \times \{0\}} =  \big(\partial_u (fs), (P_fB)^2(fs) \big)_{\partial M \times \{0\}} \\
    +  \big(\partial_u (fs), \mathcal{O}(1)(fs) \big)_{\partial M \times \{0\}}.
\end{multline}

Using the spectral decomposition of $P_fB$, the boundary conditions (\ref{eq:sqbd}), $P_{\ge 0,f}(fs)=0$, implies the following inequalities on the boundary,
\begin{subequations}
    \begin{align}
        &\big(fs, (P_fB)^i(fs) \big)_{\partial M \times \{0\}} \le 0,\text{ for odd $i$}; \label{eq:dbc1} \\
        &\big(\partial_u (fs),(P_fB)\partial_u(fs)\big)_{\partial M \times \{0\}} \le 0. \label{eq:dbc2}
    \end{align}
\end{subequations}
Similarly, using boundary conditions $P_{< 0,f}(c(-e_n)P_fD(fs))=0$, one has
\begin{multline}
    \label{eq:dbc3}
    \big(c(-e_n)P_fD(fs),P_fB(c(-e_n)P_fD(fs))\big)_{\partial M \times \{0\}} \\
    =\big(\partial_u(fs)+ (P_fB)(fs) , (P_fB)\partial_u(fs) + (P_fB)^2(fs)  \big)_{\partial M \times \{0\}} \ge 0; 
\end{multline}
Using (\ref{eq:dbc1}), (\ref{eq:dbc2}) to cancel off some terms in (\ref{eq:dbc3}), one has
\begin{equation}
    \label{eq:dbc4}
    2\symup{Re}\big(\partial_u (fs), (P_fB)^2(fs) \big)_{\partial M \times \{0\}}\ge 0.
\end{equation}

Note the following expression for the $\symbf{H}^2$-norms on $\partial M \times [0,1]$,
\begin{equation}
    \label{eq:h2n}
    \Vert fs \Vert^2_2 = \Vert \partial^2_u(fs) \Vert^2_0 + 2\sum^{n-1}_{i=1} \Vert \partial_u \nabla_{e_i} (fs) \Vert^2_0 + \int^1_0 \Vert fs \Vert^2_{2,\partial M \times \{u\}} du + \Vert fs \Vert^2_1.
\end{equation}
Notice that in (\ref{eq:gre}), one has $\big(\partial_u (fs), -\partial_u \Delta_{\partial M}(fs)\big)=\sum^{n-1}_{i=1}\Vert \partial_u \nabla_{e_i} (fs) \Vert^2_0$, and in (\ref{eq:e1b}), one can choose $C_3<1$. Plug (\ref{eq:e1b}), (\ref{eq:lic}), (\ref{eq:h2n}) into (\ref{eq:expan}) and using (\ref{eq:dbc4}) to cope with the boundary term, one has
\begin{multline}
    \label{eq:estbd}
    \Vert D^2(fs) \Vert^2_0 \ge (C_5-\epsilon_1) \Vert fs \Vert^2_2 + \big(\partial_u (fs), \mathcal{O}(1)(fs) \big)_{\partial M \times \{0\}} - C_6(\epsilon_1) \Vert fs \Vert^2_0 \\
    \ge (C_5-\epsilon_1 - \epsilon_2) \Vert fs \Vert^2_2 - C_7(\epsilon_1,\epsilon_2) \Vert fs \Vert^2_0,
\end{multline}
where the second inequality is due to the trace theorem of Sobolev spaces. Choose $\epsilon_1$ and $\epsilon_2$ small enough, we get the suitable estimate on $\partial M\times [0,1]$.

Finally, as in \citep[pp.~116]{Bismut:1991ve}, we use a partition of unit $\tau^2_1 + \tau^2_2 = 1$ such that $\supp (\tau_1)\subset [0,3/4)$ and $\supp (\tau_2)\subset (1/4,1]$. The following inequalities hold,
\begin{subequations}
    \begin{gather}
        \Vert D(fs) \Vert^2_0 \ge \frac{1}{2}(\Vert D(f\tau_1 s) \Vert^2_0 + \Vert D(f\tau_2 s)\Vert^2_0) - C_8 \Vert fs \Vert^2_1;\label{eq:pu1} \\
        \Vert f \tau_1 s \Vert^2_2 + \Vert f \tau_2 s \Vert^2_2 \ge \frac{1}{2} \Vert fs \Vert^2_2 - C_9 \Vert fs \Vert^2_1. \label{eq:pu2}
    \end{gather}
\end{subequations}
By (\ref{eq:e1}), (\ref{eq:estint}), (\ref{eq:estbd}) and (\ref{eq:pu1}), (\ref{eq:pu2}), we obtain the estimate (\ref{eq:garding2}) by using the interpolation property of Sobolev spaces.

\vspace{1ex}
\noindent \textit{Acknowledgments.} {The author would like to thank Professor Weiping Zhang for helpful suggestions. And we are indebted to the referees of this paper for their constructive comments.}

\normalem

\bibliographystyle{elsarticle-num-names-sorted}

\bibliography{main}

\end{document}